\pgfplotsset{compat=1.15}
\title{Note on a family of surfaces with $p_g=q=2$ and $K^2=7$}
\author{Matteo Penegini and Roberto Pignatelli}
\date{}
\newcommand\Pic{\text{\rm Pic}}
\newcommand\OO{{\mathcal{O}}}
\newtheorem{thm}{Theorem}[section]
\newtheorem{lem}[thm]{Lemma}
\newtheorem{cor}[thm]{Corollary}
\newtheorem{prop}[thm]{Proposition}
\theoremstyle{definition}
\newtheorem{defin}[thm]{Definition}
\newtheorem{rem}[thm]{Remark}
\theoremstyle{remark}
\newcommand{\ZZ}{\mathbb{Z}}
\newcommand{\CC}{\mathbb{C}}
\newcommand{\PP}{\mathbb{P}}
\newcommand{\lr}{\longrightarrow}
\newcommand{\oo}{\mathcal{O}}
\newcommand{\mL}{\mathcal{L}}
\newcommand{\mQ}{\mathcal{Q}}
\newcommand{\mT}{\mathcal{T}}
\begin{document}
\maketitle
\begin{center}

To Professor F. Catanese on the occasion of his 70th birthday
\end{center}
\begin{abstract} 
We study a family of surfaces of general type with  $p_g=q=2$ and $K^2=7$, originally constructed by C. Rito in \cite{rito}. We provide an alternative construction of these surfaces, that
allows us to describe their Albanese map and the corresponding locus $\mathcal{M}$ in the moduli space of the surfaces of general type. In particular we prove that $\mathcal{M}$ is an open subset, and it has three connected components, two dimensional, irreducible and generically smooth.
\end{abstract}

\Footnotetext{{}}{\textit{2020 Mathematics Subject Classification}:
14J29, 14J10, 14B12}

\Footnotetext{{}} {\textit{Keywords}: Surface of general type,
Albanese map, Moduli spaces}

\Footnotetext{{}} {\textit{Version:} 13 Oct. 2020}
\section{Introduction}

In the last two decades, several authors worked intensively on the classification of irregular algebraic
surfaces (i.e., surfaces S with $q(S) > 0$) and produced a considerable amount of results,
see for example the survey papers \cite{BaCaPi06,MP12,P} for a detailed bibliography on the
subject.

In particular, irregular surfaces of general type with $\chi(\mathcal{O}_S) = 1$, that is, $p_g(S) = q(S) \geq 1$ were
investigated.  By, nowadays classical, Debarre inequality [Deb81, Th\'eor\`eme 6.1] we have $p_g \leq 4$. Surfaces with
$p_g = q = 4$ and $p_g = q = 3$ are completely classified, see \cite{Be82, CaCiML98,
HP02,Pir02}. On the other hand, for the the case $p_g = q = 2$, which presents a very rich
and subtle geometry, we have so far only a partial understanding of the situation; we refer
the reader to \cite{Ca00,Ca11,Ca15,Pe09,PP, PP10, PePol14, PiPol17,PRR20, Z03} for an account on this topic and recent
results.

As the title suggest, in this paper we consider a family of minimal surfaces of general
type with $p_g = q = 2$ and $K^2 = 7$. The existence of these surfaces was originally established
by Rito in \cite{rito}; the
present work provides an alternative construction of them, that allows us
to describe their Albanese map and their moduli space.

Our results can be summarized as follows.

\begin{thm}\label{Theorem A} The Gieseker moduli space $\mathcal{M}^{\mathrm{can}}_{2, \, 2, \,7}$ of the canonical models of the surfaces of general type with $p_g=q=2$ and $K^2=7$ contains three pairwise disjoint open subsets, 
all irreducible, generically smooth of dimension $2$, such that
for each surface $S$  
in them, the Albanese map 
is a generically finite double cover onto a $(1,2)$-polarized non simple abelian surface $A$.
\end{thm}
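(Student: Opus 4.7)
The plan is to construct all surfaces $S$ with the claimed Albanese structure as (birationally) double covers of $(1,2)$-polarized non-simple abelian surfaces, then to analyze the resulting parameter space. A key preliminary observation is that a smooth double cover of a $(1,2)$-polarized abelian surface $(A, L_0)$, branched along a smooth divisor in $|2L_0|$, has $K^2 = 2 L_0^2 = 8$; so the discrepancy $K^2 = 7$ must be absorbed by a specific singular configuration of the branch divisor (or an equivalent blow-up procedure), and the combinatorial data describing this configuration will play the role of the discrete invariants distinguishing the three components.

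I would proceed in four steps. First, fix a non-simple $(1,2)$-polarized abelian surface $(A, L_0)$; non-simplicity yields an elliptic fibration $A \to E_2$ with an embedded elliptic curve $E_1 \subset A$ and an isogeny $E_1 \times E_2 \to A$, on which one can describe the relevant linear systems via theta functions. The moduli of such pairs is $2$-dimensional, matching the predicted dimension of each component. Second, classify the branch configurations producing $K_S^2 = 7$ and $p_g = q = 2$, checking the Hodge numbers from the cover formula $\pi_* \OO_S = \OO_A \oplus L_0^{-1}$ corrected by the contributions of the singularities. Third, identify the Albanese map with the covering map $\pi$. Fourth, enumerate the discrete choices in the construction: natural candidates are the $2$-torsion residue class of the square root of $\OO_A(B)$ in $\mathrm{Pic}^0(A)$, the position of the branch singularities with respect to the elliptic fibration, and the restriction type of $L_0$ to the elliptic curves inside $A$; one expects exactly three configurations to produce surfaces with the claimed invariants, each yielding an irreducible family of dimension $2$ via a projective-bundle structure over the moduli of $(A, L_0)$, with pairwise disjointness following from the preservation of each discrete invariant under deformation.

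For the generic smoothness and the dimension, compute $\chi(T_S) = 2 K_S^2 - 10 \chi(\OO_S) = 4$ and then derive $h^1(T_S) = 2$, $h^2(T_S) = 6$ at a generic $S$ via the involution splitting of $\pi_* T_S$ into invariant and anti-invariant parts, combined with Serre duality on $A$ and the vanishing $h^0(T_S) = 0$ (since $S$ is of general type with $q > 0$). For the openness of $\mathcal{M}$ inside $\mathcal{M}^{\mathrm{can}}_{2,2,7}$, the essential point is that the Albanese image $A_t$ of a small deformation $S_t$ retains both non-simplicity and the $(1,2)$-polarization type; this should follow from the fact that the elliptic fibration $A \to E_2$ pulls back to a fibration on $S$ which deforms with $S$ and forces $A_t$ to contain an elliptic curve, hence to be non-simple.

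The hardest step is expected to be this openness claim, since non-simplicity of an abelian surface is a codimension-$1$ (hence non-open) condition in the moduli of $(1,2)$-polarized abelian surfaces. The resolution will rely on showing that the rigid geometric data associated to $S$ (the fibration pulled back from $A$, and the configuration of the branch divisor) constrains deformations of the Albanese image to the non-simple locus. Combined with the verification that exactly three discrete types of construction survive, and that they are separated by their deformation-invariant discrete data, this will complete the proof.
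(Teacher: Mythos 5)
Your overall skeleton (double cover of a non-simple $(1,2)$-polarized abelian surface, discrete invariants separating components, $h^1(T_S)=2$ for openness and generic smoothness) matches the paper's strategy, but the three steps that carry the actual content are left as assertions, and one of them rests on an argument that would fail. First, the enumeration of ``exactly three configurations'' is not substantiated: the correct discrete invariant is not among the candidates you list in the form needed. In the paper the branch divisor is $C_1+t\in|f_1^*(a_4+a_3)+f_2^*(b_3+b_1)|$ on $A$ isogenous to $T_1\times T_2$, there are $16$ square roots $L$, and the invariant separating the components is the minimal torsion order (equal to $1$, $2$ or $4$) of the line bundle $\OO_{T_1}(\bar a-a_3)$ attached to $L$ (Proposition \ref{prop_12pol}). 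Identifying this, and then proving each $\mathcal{M}_j$ is irreducible, requires the explicit parametrization by products of modular curves $\mathcal{Y}_1[4]$, $\mathcal{Y}_1[2,2]$, $\mathcal{Y}_1[2,4]$ and a monodromy-transitivity check (Propositions \ref{prop.monodromy.2}, \ref{prop.monodromy.22}, \ref{prop_NM}, \ref{prop_irred}); nothing in your sketch produces the number three or the irreducibility.

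Second, your route to openness is the step that actually breaks. You propose to show that the elliptic fibration $A\to T_2$ pulls back to a fibration on $S$ that ``deforms with $S$''; but the base of that fibration has genus $1$, and the existence of a fibration over a curve of genus $1$ is \emph{not} a deformation (or topological) invariant -- the Catanese--Siu type rigidity of fibrations only applies for base genus $\geq 2$. So this does not force the Albanese image of a deformation to stay non-simple. The paper avoids this entirely: it proves $h^1(T_S)=2$ directly, which forces $Def(S)$ to be smooth of dimension $2$ and hence forces the $2$-dimensional irreducible $\mathcal{M}_j$ to be a whole (open) component. That computation is the technical heart and is not obtainable from ``involution splitting plus Serre duality on $A$'' alone: one needs $H^1(T_S)\cong H^1(T_{B'}(-\log(R+E'+C_1)))$, the rigidity $h^0(\OO_{\hat R}(2\hat R))=0$ (which requires showing the genus-$3$ normalization $R$ is a non-hyperelliptic biquadratic plane quartic and ruling out a bitangent), the blow-up deformation statement of Theorem \ref{thm_def} to get injectivity of $H^1(T_S)\to H^1(T_A)$, and Lemma \ref{lem_prodElliptic} to bound the image by $H\cap H_2$. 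As written, your proposal would establish neither the count of components nor their openness.
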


 It is worth to notice here, that there is only another known family of surfaces of general type with  $p_g = q = 2$ and $K^2 = 7$ found by Cancian and Frapporti in \cite{CanFr15} and described in details in \cite{PiPol17} whose elements have a different Albanese map. Namely, the Albanese map is a generically finite \emph{triple} cover of a principally polarized abelian surface. Hence, being the degree of the Albanese map a topological invariant (see Proposition \ref{prop.degree.alb}),  these families  provide a substantially new piece in the fine classification of minimal surfaces of general type with $p_g=q=2$ in the spirit of \cite{Ca84,Ca89,Ca90}.

\medskip

The paper is organized as follows.

In Section \ref{Sec_theConstruction} we explain our construction in details, pointing out the similarities and the differences with \cite{rito}, and computing the invariants of the resulting surfaces (Proposition \ref{prop_invariantiRito}). We study their Albanese map, giving a precise description of its image, isogenous to a product of two curves of genus $1$,  and of its branch curve.

In Section \ref{sec_ellfib} we use our description to study the modular image of Rito's family, showing that it has three  connected components, all irreducible of dimension $2$. 

The last two sections contain results of deformation theory headed to compute $h^1(S, \, T_S)=2$ (Proposition \ref{prop_H1tan}) from which it follows that each component is open and generically smooth in the moduli space. 

Section \ref{sec_deform} is devoted to a general result, Theorem \ref{thm_def}, about the deformations of the blow up in a point, that was crucial for the proof and that we find of independent interest. The situation is the following: consider a point $p$ in a smooth surface $B$, a curve $D$ in $B$ smooth at $p$ and a vector $v \in T_pB$. A standard exact sequence associates to $v$ an infinitesimal deformation $\mathcal{B}$ of the blow-up of $B$ in $p$. Then Theorem \ref{thm_def} says that $\mathcal{B}$ contains an infinitesimal deformation of the strict transform of $D$ if and only if the class of $v$ in the normal vector space $T_pB/T_pD$ extends to a global section of the normal bundle of $D$ in $B$.

Finally Section \ref{moduli} is devoted to the study of the first-order deformations of the surfaces in $\mathcal{M}$. To show $h^1(S, \, T_S)=2$,
 we show in fact that the map $H^1(S, \, T_S) \rightarrow H^1(A, \, T_A)$ is injective, and its image is given by the infinitesimal deformations of $A$ that are still isogenous to a product.

\medskip

\textbf{Acknowledgments.} Both authors were partially supported by
GNSAGA-INdAM.

\medskip

\textbf{Notation and conventions.} We work over the field
$\mathbb{C}$ of complex numbers. 
By \emph{surface} we mean a projective, non-singular surface $S$,
and for such a surface $K_S$ denotes the canonical
class, $p_g(S)=h^0(S, \, K_S)$ is the \emph{geometric genus},
$q(S)=h^1(S, \, K_S)$ is the \emph{irregularity} and
$\chi(\mathcal{O}_S)=1-q(S)+p_g(S)$ is the \emph{Euler-Poincar\'e
characteristic}.
\section{The construction}\label{Sec_theConstruction}
	
In this section we give an alternative, but equivalent,  construction to the surface $S$ of general type with $p_g=q=2$ and $K^2=7$ constructed by Rito in \cite{rito}.

\begin{figure}
\centering
\includegraphics[width=0.75\linewidth]{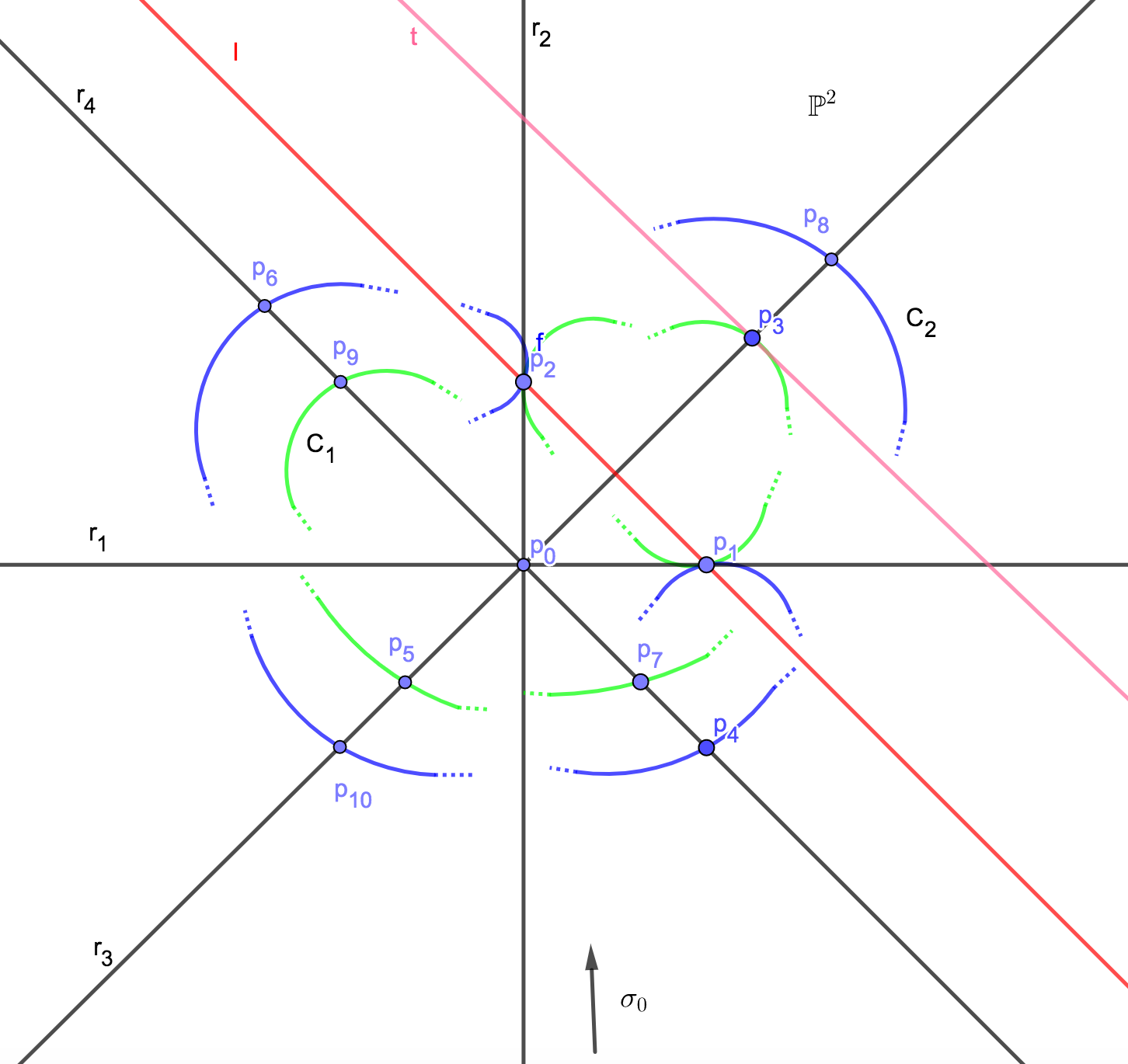}
\includegraphics[width=0.75\linewidth]{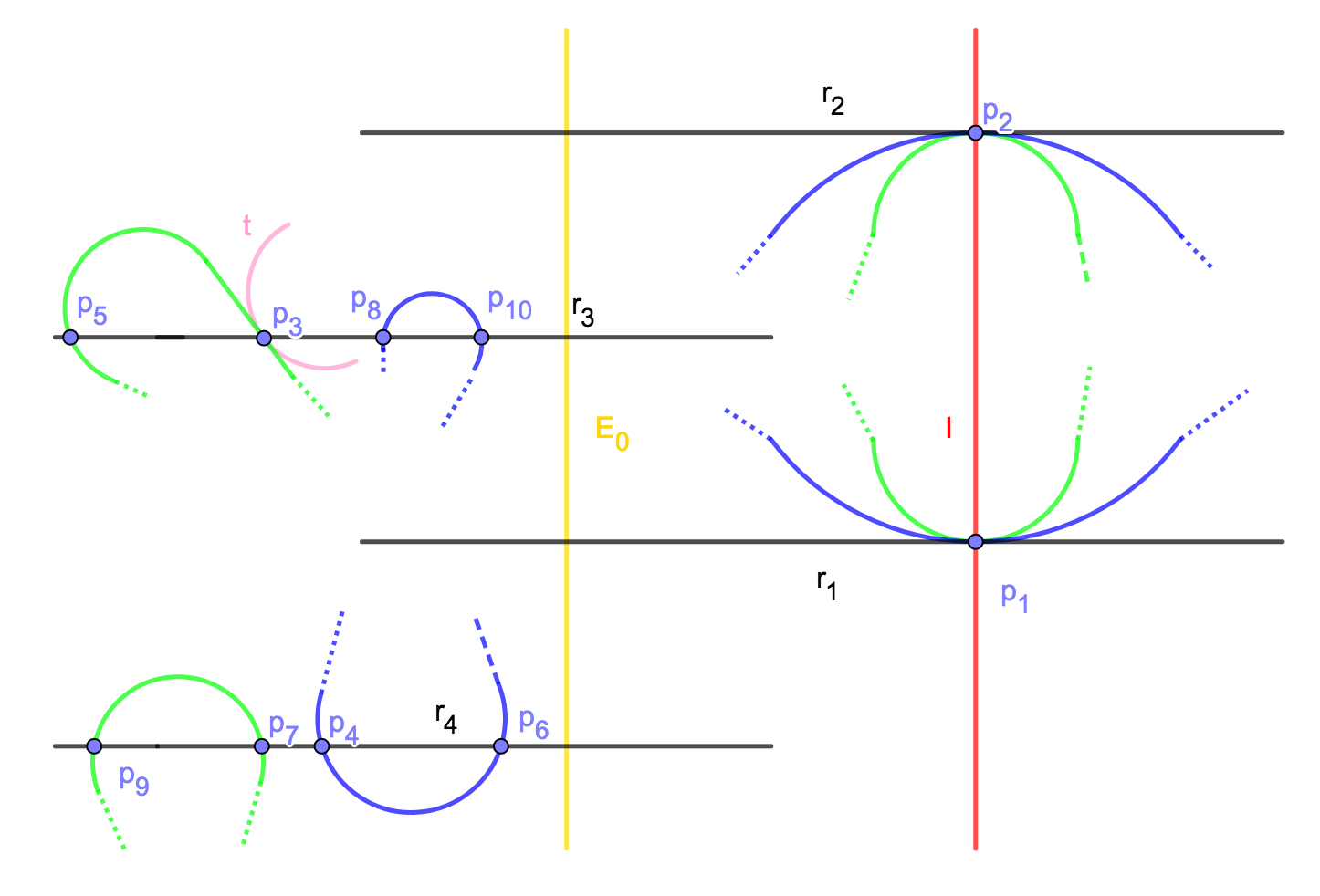}
\caption{$\sigma_0 \colon Bl_{p_0}(\mathbb{P}^2) \longrightarrow \mathbb{P}^2$}\label{Fig1}
\end{figure}

\medskip

Let us fix the following points on $\mathbb{P}^2$ :
\[ p_0=(1:0:0), \, p_1=(0:1:0), \, p_2=(0:0:1), \, p_3=(1:1:1), \, p_4=(1:a:b).
\]
Moreover, let us denote by $r_i$ with $i=1,\ldots ,4$
 the four lines joining $p_0$ with each $p_i$ resectively, i.e.,
 \[
 r_1=(x_2), \, r_2=(x_1), \, r_3=(x_1-x_2), \, r_4=(bx_1-ax_2),
 \]
  and the two conics:
  \[ C_1=(x_0^2-x_1x_2), \quad C_2=(abx_0^2-x_1x_2).
  \]
  Note that both conics are tangent to $r_1$ and $r_2$ respectively in $p_1$ and $p_2$. Finally, $p_3 \in C_1$ and $p_4 \in C_2$.
  
Fix a square root $c$ of $ab$ and consider the following points on the curves we have just defined 
\[ p_5=(1:-1:-1), \, p_6=(1:-a:-b), \, p_7,p_9=(\pm c:a:b), \, p_8,p_{10} = (\pm 1:c:c).
\]  
Finally, let $\ell =(x_0)$ be the line through $p_1$ and $p_2$ and $t=(2x_0-x_1-x_2)$ be the tangent line to $C_1$ through $p_3$, see Figure \ref{Fig1} to have a visual representation of the situation.

Up to now, we followed Rito in \cite{rito}, changing the notation only for the curve $t$ ($R$ in Rito's notation). Now, we proceed a bit differently. 
Let us apply the following birational transformations of $\mathbb{P}^2$:
\begin{enumerate}
\item We blow up the point $p_0$ and we get $\sigma_0 \colon Bl_{p_0}(\mathbb{P}^2) \longrightarrow \mathbb{P}^2$ with exceptional divisor $E_0$ (see Figure \ref{Fig1} again). 

\begin{figure}
\centering
\includegraphics[width=0.75\linewidth]{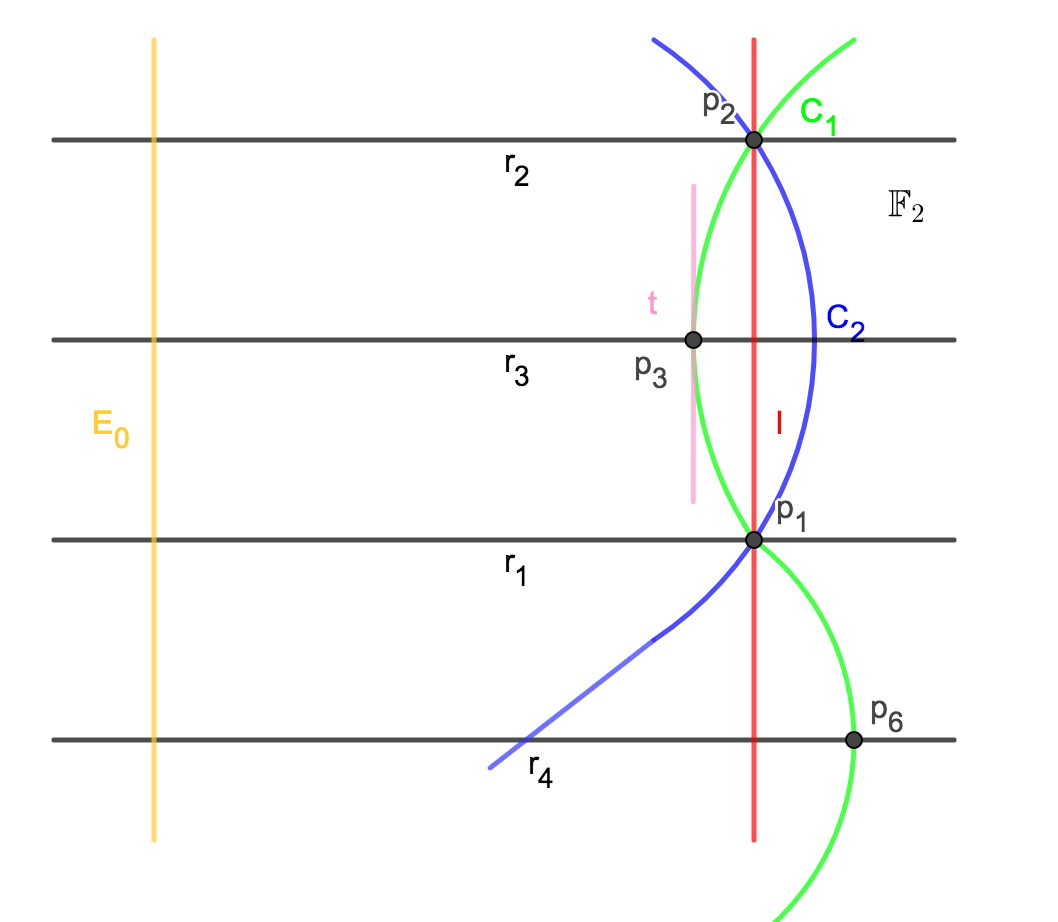}
\includegraphics[width=0.75\linewidth]{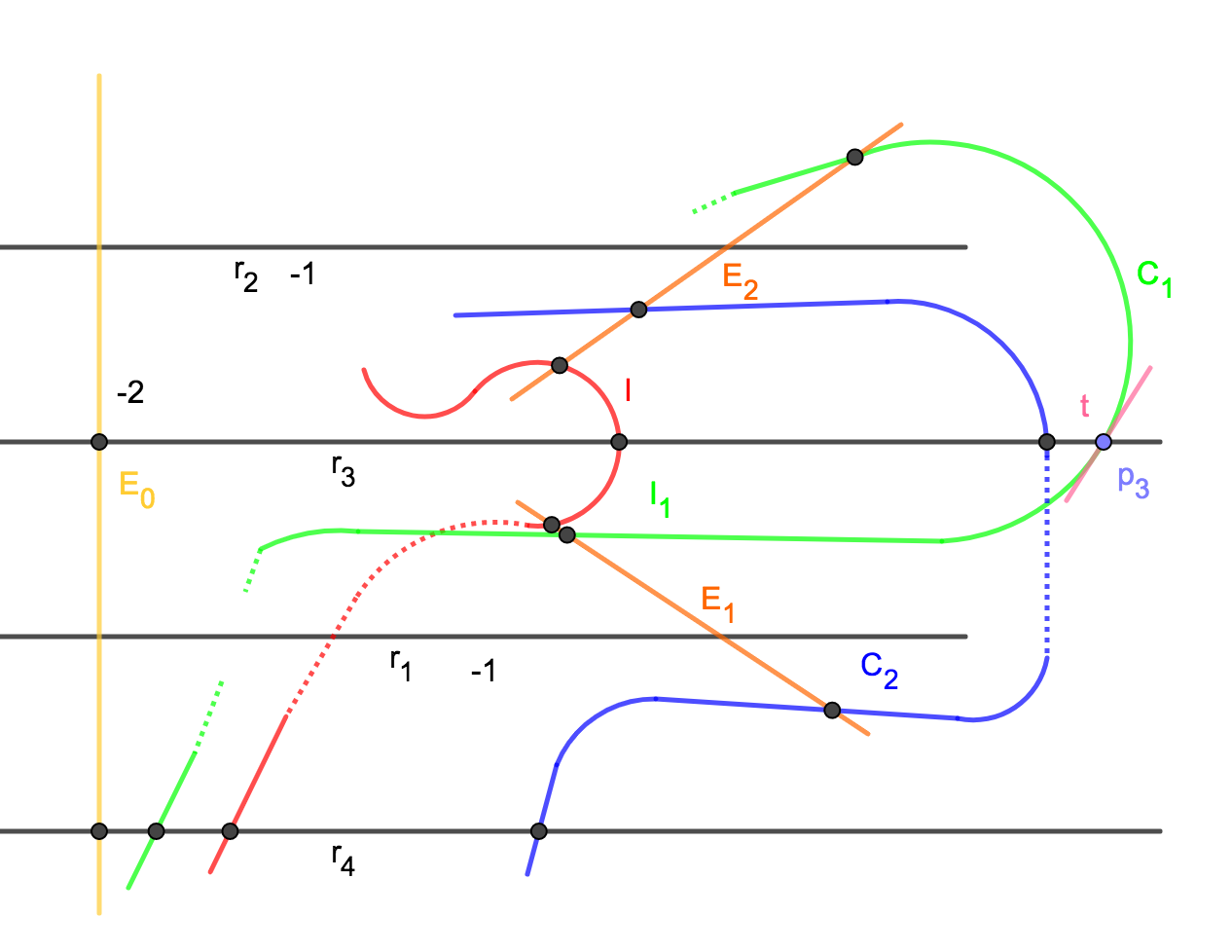}
\caption{The blow up on $\mathbb{F}_2$ of the images of the points $p_1$ and $p_2$}\label{Fig2}
\end{figure}
Considering the pencil of lines through $p_0$ on $Bl_{p_0}(\mathbb{P}^2)$ we have a rational pencil of curves with self-intersection $0$, which include the strict transforms of the four lines $r_i$, $i=1, \ldots ,4$. We notice that on  $Bl_{p_0}(\mathbb{P}^2)$ we can lift the natural involution on $\PP^2$
\[ j \colon (x_0 : \, x_1: \, x_2) \mapsto (-x_0 : \, x_1: \, x_2)
\]
which has as fixed divisor $E_0 + \sigma_0^*(\ell)$.

\item The quotient by this involution $Bl_{p_0}(\mathbb{P}^2)/ j$ is the Segre--Hirzebruch surface $\mathbb{F}_2$. 

The images of the four lines $r_i$ are fibres of the fibration on $\mathbb{F}_2$. Moreover, the only negative section of this fibration coincide with the image of $E_0$.

\item We blow up on $\mathbb{F}_2$ the images of the points $p_1$ and $p_2$, introducing two exceptional divisors $E_1$ and $E_2$.

We recall that the images of the lines $r_1$ and $r_2$ and of the conics $C_1$ and $C_2$ pass all through these points. Performing this operation the images of $r_1$ and $r_2$ became $-1$-curves (see Figure \ref{Fig2}).

\begin{figure}[h]
\centering
\includegraphics[width=0.75\linewidth]{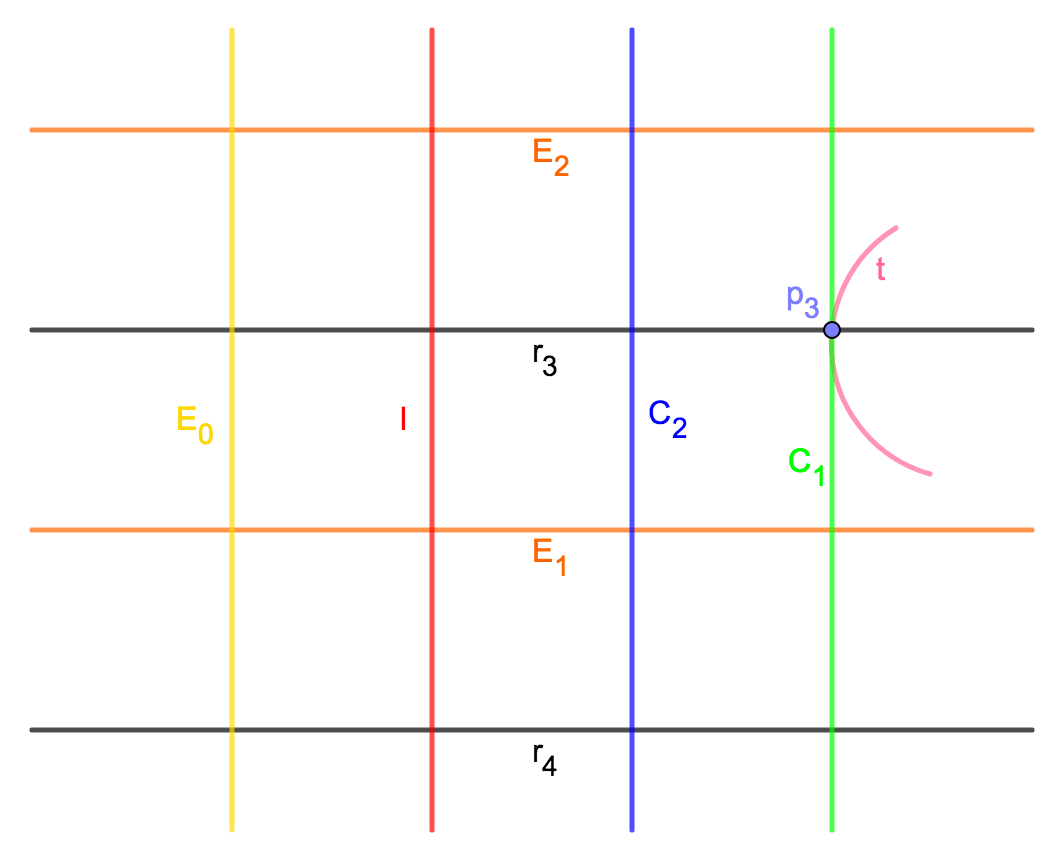}
\caption{Contracting $r_1$ and $r_2$ we find $\PP^1 \times \PP^1$}\label{Fig3}
\end{figure}
\item We contract the images of the curves $r_1$ and $r_2$. The resulting surface is exactly $\PP^1 \times \PP^1$.
\end{enumerate}
Summarizing we have obtained a rational map of degree $2$
\[ \sigma \colon \mathbb{P}^2 \dashrightarrow \mathbb{P}^1 \times \mathbb{P}^1.
\]

We denote with the same letters the strict transform on $\mathbb{P}^1 \times \mathbb{P}^1$ of all the curves considered on $\PP^2$, since no confusion arises (see Figure \ref{Fig3}). 
The bidouble cover of $ \mathbb{P}^1 \times \mathbb{P}^1$ with ramification divisors
\[
D_1 = 0, \,  D_2=E_1+E_2+r_3+r_4, \, D_3=C_1+C_2+E_0+l,
\]
is obviously the product $T_1 \times T_2$ of two double covers $\phi_j \colon T_j \rightarrow \PP^1$ branched at $4$ points, two curves of genus $1$
(see Figure \ref{Fig3}).

The  fibre product of the bidouble cover $T_1 \times T_2 \rightarrow \PP^1 \times \PP^1$ with $\sigma_0$ gives the bidouble cover of $\PP^2$ studied in \cite{rito}[Section 3, Step 1], where it is shown that it is birational to an abelian surface that we denote by $A$ (it was $V'$ in  \cite{rito}).
We can summarize this construction with the following diagram.

  \[
\begin{xy}
\xymatrix{
\mathbb{P}^2   \ar@{.>}[d]_{2:1}   &  A \ar@{->}[d]_{\iota}^{2:1} \ar@{.>}[l]_{\pi}^{2^2:1} \\
\mathbb{F}_2  \ar@{.>}[d]  &  T_1 \times T_2   \ar@{->}[dl]^{2^2:1} \\ 
\PP^1 \times \PP^1
 }
\end{xy}
\]
Note that the map $\iota \colon A \rightarrow T_1 \times T_2$ is an isogeny of degree $2$.

We see (compare \cite{rito}[Section 3, Step 2]) that the strict transform of the curve $C_1$ is tangent to  the curve $t$ on $A$ at a point $p$. This point is a tacnode (singularity of type $(2,2)$) for the strict transform of the curve $t$.
So the divisor $t+ C_1$ is reduced and has a singularity of type $(3,3)$.

\begin{rem} We see that we recover the construction due to Rito \cite{rito} of an abelian surface with a $(1,2)$-polarization, Please notice that in \cite{rito} the abelian surface $A$ was labelled by $V'$ and the curves $C_1$ and $t$ by $\hat{C}_1$ and $\hat{R}$. 
\end{rem}

In \cite{rito} it is shown that the divisor  $t+ C_1$ is even, i.e. there is a divisor $L$ such that
$$t+ C_1\equiv 2L,$$
and that $$( t+ C_1)^2=16.$$\\
So $L$ is a polarization of type $(1,2)$. This is exactly the situation described by the first author and F. Polizzi in \cite[Remark 2.2]{PP}.  There the authors  suggest how to construct a surface with $p_g=q=2$ and $K^2_S=7$ as a generically finite double cover of $A$ branched along a divisor as $t+C_1$.  We follow the suggestion slavishly and we summarize the situation with the following special case of  \cite[Proposition 1]{rito}

\begin{prop}\label{prop_invariantiRito}
Let $A$ be an Abelian surface.
Assume that $A$ contains a reduced curve $t+C_1$ and a divisor $L$ such that $t+C_1 \equiv 2L$,
$(t+C_1)^2=16$ and $t+C_1$ contains a $(3,3)$-point and no other singularity.
Let $S$ be the smooth minimal model of the double cover of $A$ with branch locus $t+C_1$.
Then $p_g(S)=q(S)=2$ and $K_S^2=7$.
\end{prop}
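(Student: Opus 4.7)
The plan is to construct $S$ via the canonical resolution of the double cover of $A$ branched along $B := t+C_1$ and to read off its numerical invariants directly from the resolution data.

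First I would analyse the local geometry of the $(3,3)$-singularity: since $t$ has a tacnode at $p$ and $C_1$ is smooth and tangent to $t$ at $p$, the reduced divisor $B$ has three smooth branches at $p$ sharing a common tangent direction, hence $\mathrm{mult}_p B = 3$. Blowing up $A$ at $p$ produces $\sigma_1\colon A_1 \to A$ with exceptional $E_1$, and the strict transform of $B$ acquires an ordinary triple point $p_1 \in E_1$ with three distinct tangent slopes. A second blow-up $\sigma_2\colon A_2 \to A_1$ at $p_1$ then fully resolves the branch. Writing $\sigma := \sigma_1\circ \sigma_2$ and $\hat B$, $\hat E_1$ for the strict transforms on $A_2$, the corrected even branch is
\[
B_2 \;=\; \hat B + \hat E_1 \;\equiv\; 2 L_2, \qquad L_2 \;:=\; \sigma^* L - \hat E_1 - 3 E_2,
\]
and $B_2$ is smooth with three mutually disjoint components.

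Next I would apply the standard double-cover formulae for the smooth cover $\pi\colon X_2 \to A_2$ with smooth branch $B_2 \equiv 2 L_2$:
\[
K_{X_2}^2 \;=\; 2(K_{A_2}+L_2)^2, \qquad \chi(\OO_{X_2}) \;=\; 2\chi(\OO_{A_2}) + \tfrac{1}{2}\,L_2(K_{A_2}+L_2).
\]
Plugging in $L^2=4$, $K_A=0$, $\chi(\OO_A)=0$ and tracking the two blow-up corrections (with multiplicity data $k_1=1$, $k_2=2$) yields $(K_{A_2}+L_2)^2 = 3$ and $L_2(K_{A_2}+L_2)=2$, hence $K_{X_2}^2 = 6$ and $\chi(\OO_{X_2}) = 1$. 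From $\pi_*\OO_{X_2} = \OO_{A_2} \oplus \OO_{A_2}(-L_2)$, Serre duality, and the short exact sequence for $E_2$ twisted by $\sigma^*L$, both $h^1(\OO_{A_2}(-L_2))=0$ and $h^0(\OO_{A_2}(K_{A_2}+L_2))=1$ follow from the surjectivity of the evaluation $H^0(A,L)\to L_p$, i.e., from the fact that $p$ is not a base point of the pencil $|L|$; this gives $q(X_2)=2$ and $p_g(X_2)=2$.

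Finally I would locate the unique $(-1)$-curve of $X_2$. The branch component $\hat E_1 \subset B_2$ has $\hat E_1^2 = -2$ on $A_2$, so its reduced preimage $E_1' \subset X_2$ is a smooth rational curve with $(E_1')^2 = \tfrac{1}{2}\hat E_1^2 = -1$. Contracting $E_1'$ yields $S$, with $K_S^2 = K_{X_2}^2 + 1 = 7$ and with $\chi(\OO_S)=1$, $q(S)=2$, $p_g(S)=2$ preserved. The step I expect to be the main obstacle is verifying that $E_1'$ is the \emph{only} $(-1)$-curve of $X_2$, so that the contraction $X_2\to S$ indeed lands on the minimal model: the identity $(t+C_1)^2=16$ together with the local intersection $t\cdot C_1 = 4$ at the $(3,3)$-point forces $t^2 + C_1^2 = 8$, but ruling out additional $(-1)$-curves arising from the strict transforms of $t$ and $C_1$ on $A_2$ requires the individual self-intersections of these two components, which come from the specific geometry described in Section~\ref{Sec_theConstruction}.
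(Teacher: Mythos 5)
Your route --- resolving the $(3,3)$-point by two blow-ups, putting $\hat E_1$ into the corrected branch divisor, applying the double-cover formulas, and contracting the rational curve over $\hat E_1$ --- is exactly the construction the paper gives right after the statement (the paper itself defers the proof to Rito's Proposition~1), and your numerics $(K_{A_2}+L_2)^2=3$, $L_2(K_{A_2}+L_2)=2$, $K_{X_2}^2=6$, $\chi(\mathcal{O}_{X_2})=1$, $(E_1')^2=-1$, $K_S^2=7$ are all correct. However, the computation of $p_g$ and $q$ rests on the assertion that $p$ is not a base point of $|L|$, which you state as a known fact but never justify, and this is precisely where the dichotomy $p_g=q=2$ versus $p_g=q=3$ is decided: $L$ is a polarization of type $(1,2)$, so $|L|$ is a pencil whose base locus consists of four points (it is \emph{not} base point free), and your own exact sequence shows that if $p$ were one of them then $h^0(K_{A_2}+L_2)=2$ and $h^1(\mathcal{O}_{A_2}(-L_2))=1$, giving $p_g(S)=q(S)=3$. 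Nothing in the bare numerical hypotheses of the Proposition excludes this; one must use the actual construction, e.g.\ that $L\equiv f_1^*\bar a+f_2^*\bar b$ with $\bar b\neq b_1$, so that $h^0(L-C_1)=0$ and $H^0(A,L)$ restricts isomorphically onto the complete, base-point-free degree-two series on the elliptic curve $C_1\ni p$, whence no base point of $|L|$ lies on $C_1$. As written, this step is a genuine gap.

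The second incomplete point is the one you flag yourself: that $E_1'$ is the only $(-1)$-curve, so that one contraction already yields the minimal model. The data you say you are missing ($t^2=8$, $C_1^2=0$, forced by $(t+C_1)^2=16$ and $(t\cdot C_1)_p=4$) would only dispose of the \emph{named} components --- the preimage of the strict transform of $t$ is a genus-$3$ curve with self-intersection $0$ and that of $C_1$ a genus-$1$ curve with self-intersection $-1$, so neither is a $(-1)$-curve --- but it does not rule out unnamed rational curves. The clean way to close this is to note $K_{X_2}=\beta^*(\sigma^*L-E_2)$ and to check, using the Hodge index theorem for $L$ on $A$ (for an irreducible $\Gamma_0\subset A$ one has $\operatorname{mult}_{p}(\Gamma_0)\le \tfrac12 L\cdot\Gamma_0$, hence $(\sigma^*L-E_2)\cdot\hat\Gamma_0\ge 0$), that $\sigma^*L-E_2$ has non-negative degree on every irreducible curve of $A_2$ except $\hat E_1$; then $E_1'$ is the unique curve on which $K_{X_2}$ is negative and minimality after its contraction follows. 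Until both points are supplied, the proposal is a correct plan rather than a proof.
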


Let us now construct $S$ step by step starting from $A$.
\begin{enumerate}
\item First, we resolve the singularity in $p$. To do that, we need to blow up $A$ twice, first  in $p$ and then in a point infinitely close to $p$. Let us denote these two blow ups by 
\[ B' \stackrel{\sigma_4}{\longrightarrow} B \stackrel{\sigma_3}{\longrightarrow} A.
\]
On $B'$, let us denote by $F$ the exceptional divisor relative to $\sigma_4$, by $E'$ the strict transform of the exceptional divisor $E$ relative to $\sigma_3$, by $C_1$ the strict transform of $C_1$ and, finally, by $R$ the strict transform of $t$ (see Figure \ref{Fig4}). 

\begin{figure}[h]
\centering
\includegraphics[width=0.75\linewidth]{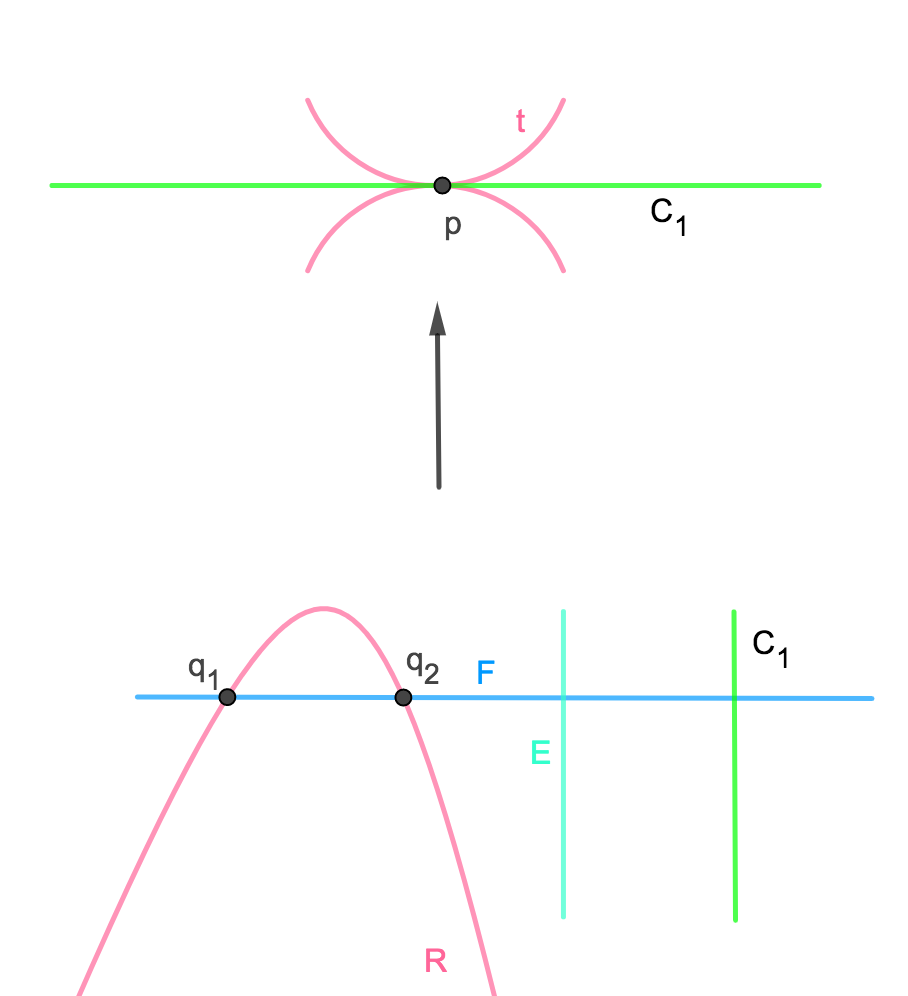}
\caption{The birational map $\sigma_3 \circ \sigma_4 \colon B' \rightarrow A$}\label{Fig4}
\end{figure}

In addition, one gathers  the following information:
$E'\cong \PP^1$ and $(E')^2=-2$, $F\cong \PP^1$ and $F^2=-1$,  $g(C_1)=1$  and $C_1^2=-2$. 

\item Second, we consider a double cover of $\beta\colon S'\longrightarrow B'$ ramified over $R+C_1+E'$ (that's even since $t+C_1$ is even on $A$). The surface $S'$ is a surface of general type, not minimal. Indeed, it contains a $-1$-curve, which is $\hat{E}=\beta^{-1}(E')$. The ramification divisor is denoted $\hat{R}+\hat{C_1}+\hat{E}$. Notice that $\hat{C_1}$ has genus $1$ and $\hat{C_1}^2=-1$.

\item Finally, to get $S$ we contract the $-1$-curve $\hat{E}$.
\end{enumerate}

We can summarize the construction of $S$ with the following diagram.
  \[
\begin{xy}
\xymatrix{
S'   \ar@{->}[d]    \ar@{->}[r]^{\beta}   &  B' \ar@{->}[d]^{\sigma_4} \\
S   \ar@{->}[dr]_{\alpha}  &  B \ar@{->}[d]^{\sigma_3} \\ 
& A 
 }
\end{xy}
\]

We note that since $\alpha$ is the Albanese morphism of $S$, we obtained in particular that the Albanese variety of these surfaces is isogenous to a product of elliptic curves:  
\begin{prop}\label{prop_isogenous}
The Albanese variety $A$ of the surface $S$ is isogenous to a product via an isogeny $\iota \colon A \rightarrow T_1 \times T_2$ of degree $2$.
\end{prop}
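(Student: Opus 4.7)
The content of the proposition splits into two parts: first, that the abelian surface $A$ constructed above is actually the Albanese variety of $S$; second, that $A$ is isogenous to $T_1 \times T_2$ via a degree-$2$ isogeny. The second assertion is already built into the construction, since the morphism $\iota \colon A \to T_1 \times T_2$ appears as an intermediate step in the bidouble cover $A \dashrightarrow \mathbb{P}^1 \times \mathbb{P}^1$ and has been noted there to have degree $2$. So the only nontrivial point is the identification $A = \Alb(S)$.

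The plan is a short degree count. By Proposition \ref{prop_invariantiRito} we have $q(S)=2=\dim A$, so $\Alb(S)$ is a $2$-dimensional abelian variety. The construction exhibits $\alpha$ as the composition of the double cover $\beta\colon S'\to B'$ with the birational morphism $\sigma_3\circ\sigma_4\colon B'\to A$, after contracting $\hatE$; hence $\alpha\colon S\to A$ is generically finite of degree $2$, and in particular surjective. By the universal property of the Albanese variety, $\alpha$ factors as $\alpha=f\circ\mathrm{alb}_S$ with $f\colon\Alb(S)\to A$ a homomorphism of abelian varieties. Since $\alpha$ is surjective, so is $f$, and being a surjective morphism between $2$-dimensional abelian varieties it is an isogeny of some degree $d$. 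Moreover $\mathrm{alb}_S(S)$ cannot be a curve (otherwise $\alpha(S)=f(\mathrm{alb}_S(S))$ would also be a curve), so $\mathrm{alb}_S$ is generically finite as well.

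The crucial step is showing $d=1$. Multiplicativity of degrees gives $2=\deg\alpha=\deg(\mathrm{alb}_S)\cdot d$. If $\deg(\mathrm{alb}_S)=1$, then $\mathrm{alb}_S$ would be birational, making $S$ birational to the abelian surface $\Alb(S)$; but this is incompatible with $\chi(\OO_S)=1\neq 0=\chi(\OO_{\Alb(S)})$, as $\chi(\OO)$ is a birational invariant of smooth projective surfaces. Hence $\deg(\mathrm{alb}_S)=2$ and $d=1$, so $f$ is an isomorphism; this identifies $A$ with $\Alb(S)$ and $\alpha$ with the Albanese morphism of $S$, yielding the statement. I do not foresee any serious obstacle: once the degree-$2$ isogeny to $T_1\times T_2$ from the construction and the universal property of the Albanese are in place, everything reduces to the two-line degree argument above.
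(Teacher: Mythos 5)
Your argument is correct: the isogeny $\iota$ of degree $2$ is indeed part of the construction, and your degree count ($2=\deg\alpha=\deg(\mathrm{alb}_S)\cdot\deg f$, with $\deg(\mathrm{alb}_S)=1$ excluded because $\chi(\mathcal{O}_S)=1\neq 0$) correctly forces the factor map $f\colon \mathrm{Alb}(S)\to A$ to be an isomorphism. The paper gives no proof at all for this proposition --- it simply asserts ``since $\alpha$ is the Albanese morphism of $S$'' as a consequence of the construction --- so your write-up supplies exactly the standard justification that the authors leave implicit; there is nothing to object to.
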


\section{Rito's family has three components of moduli dimension $2$}\label{sec_ellfib} 
The surfaces $S$ are constructed by a configuration of plane curves determined by two parameters (as noticed already in \cite[Section 3, Step 4]{rito}), that we denoted by $a,b$, and a choice of a linear system $|L|$ such that $|2L|$ contains the divisor $|C_1+t|$. So there are $2^4$ possible choice for $L$, since we can always add to $L$ a $2-$torsion line bundle.
In this section we prove that the family has three connected components, all irreducible  of moduli dimension $2$. 

\begin{defin}
Denote by $\mathcal{M}$ the locus of the surfaces $S$ above in the Gieseker moduli space of the surfaces of general type.
\end{defin}

The isogeny $\iota$ induces two natural fibrations  $f_i\colon A \longrightarrow T_i$ with fibres $\Lambda_i$, $i=1,2$ of genus $1$. The fibres of each fibration are isomorphic to the base of the other fibration: $i \neq j  \Rightarrow \Lambda_i \cong T_j$.

\begin{rem}\label{rem_fib1}
We label the the ramification points of $\phi_1 \colon T_1 \rightarrow \PP^1$ as $a_1,a_2,a_3,a_4$ using Figure \ref{Fig3}  as follows.

$\phi_1(a_1)$ be the projection of the line labeled $E_1$ 

$\phi_1(a_2)$ be the projection of the line labeled $E_2$ 

$\phi_1(a_3)$ be the projection of the line labeled $r_3$ 

$\phi_1(a_4)$ be the projection of the line labeled $r_4$ 

Similarly, we label the the ramification points of $\phi_2 \colon T_2 \rightarrow \PP^1$ as $b_1,b_2,b_3,b_4$ 
so that 

$\phi_2(b_1)$ be the projection of the line labeled $C_1$ 

$\phi_2(b_2)$ be the projection of the line labeled $C_2$ 

$\phi_2(b_3)$ be the projection of the line labeled $l$ 

$\phi_2(b_4)$ be the projection of the line labeled $E_0$ 
\end{rem}

Both fibrations have been considered in \cite[Section 3, Step 3]{rito}. 
The fibration $f_1$ is the pull back of the pencil of the lines through the point $p_0$ and $a_j$ corresponds to the line $r_j$. So, the branching points of $\phi_1$ correspond to the lines $r_1$, $r_2$, $r_3$ and $r_4$, that, in the natural coordinates, give the $4$ points $(1:0), (0:1),(1:1)$ and $(a:b)$, with cross-ratio $\frac{a}{b}$.

The fibration $f_2$ is given by the pencil of conics tangent to the lines $r_i$ in the points $p_i$, $i=1,2$: $b_1$ corresponding to the conic $C_1$, $b_2$ corresponding to $C_2$, $b_3$ corresponding to $2l$, $b_4$ corresponding to $r_1+r_2$.
Writing this pencil as $\langle x_0^2,x_1x_2 \rangle$ we get a parametrization of $\PP^1$ such that the branching points of $\phi_2$ have coordinates $(1:1), (1:ab),(1:0)$ and $(0:1)$, with cross-ratio $ab$.

We deduce the following 
\begin{prop}\label{prop_moduli_dim}
Every connected component of $\mathcal{M}$ has dimension $2$.
\end{prop}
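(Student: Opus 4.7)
The plan is to pinch the dimension between matching upper and lower bounds.

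For the upper bound, the construction of Section \ref{Sec_theConstruction} exhibits every $S \in \mathcal{M}$ as built from the two complex parameters $a, b$ together with the discrete datum of a line bundle $L$ on $A$ satisfying $2L \equiv t + C_1$. Since two such line bundles differ by an element of $A[2]$, there are only $|A[2]|=16$ admissible choices of $L$ once $(a,b)$ has been fixed. This presents $\mathcal{M}$ as the image of a $2$-dimensional parameter space under a map with discrete fibres, so each connected component of $\mathcal{M}$ has dimension at most $2$.

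For the lower bound, the plan is to exhibit an intrinsic invariant of $S$ which varies non-trivially with $(a,b)$. By Proposition \ref{prop_isogenous} the Albanese variety $A$ of $S$ comes with a canonical degree-$2$ isogeny $\iota\colon A \to T_1 \times T_2$ onto a product of elliptic curves, so the unordered pair of isomorphism classes $\{T_1, T_2\}$ (and in particular the pair of $j$-invariants) is an isomorphism invariant of $S$. By Remark \ref{rem_fib1} the cross-ratios of the four branch points of $\phi_1$ and $\phi_2$ are $a/b$ and $ab$ respectively, and the map $(a,b) \mapsto (a/b,\, ab)$ is dominant on $(\mathbb{C}^*)^2$ (generically $2$-to-$1$, since $(u,v) = (a/b, ab)$ is inverted by $a = \sqrt{uv}$, $b = \sqrt{v/u}$). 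Hence the induced map from $\mathcal{M}$ to the moduli of such pairs of elliptic curves has $2$-dimensional image, forcing every component of $\mathcal{M}$ to have dimension at least $2$.

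The delicate step, and the main obstacle, will be verifying that the pair $\{T_1, T_2\}$ is really determined by $S$ alone and not by the auxiliary choices of the construction. I would deduce this by recognising $T_1$ and $T_2$ intrinsically as the two elliptic quotients of $A$ provided by the natural fibrations $f_1, f_2$, which are in turn pinned down by the $(1,2)$-polarisation on $A$ together with the shape of the branch divisor $t + C_1$. Once this canonicity is in place, combining the two bounds yields the stated equality.
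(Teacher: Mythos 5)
Your proposal follows essentially the same route as the paper: the upper bound comes from the finite $(a,b)$-parametrization (plus the $16$ discrete choices of $L$), and the lower bound from the variation of the Albanese surface detected by the cross-ratios $a/b$ and $ab$ of the branch points of $\phi_1,\phi_2$. The ``delicate step'' you flag is resolved in the paper more cheaply: rather than arguing that the pair $\{T_1,T_2\}$ is canonically attached to $S$, it uses only that isomorphic surfaces have isomorphic Albanese varieties, so the relative Albanese morphism factors through the moduli space and maps each component dominantly onto the two-dimensional locus of $(1,2)$-polarized abelian surfaces isogenous to a product.
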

\begin{proof}
The base of the family of the surfaces $S$ has a finite proper map on an open subset of $\CC^2$ given by the parameters $(a,b)$.
So, if $\mathcal{C}$ is any irreducible component of it,  $\dim \mathcal{C}=2$.

The relative Albanese morphism maps $\mathcal{C}$ to the moduli space of the Abelian surfaces with a polarization of type $(1,2)$. By Proposition \ref{prop_isogenous} the image of ${\mathcal C}$ is contained in the $2-$dimensional subvariety $\mathcal{I}$ of those isogenous to a product of curves. Since  these curves are double covers of $\PP^1$  branched at $4$ points with cross-ratio respectively $\frac{a}{b}$ and $ab$ the general pair of curves of genus $1$ appears in the image of $\mathcal{C}$:  the map $\mathcal{C}\rightarrow \mathcal{I}$ is generically finite and therefore dominant. 

Since isomorphic manifolds have isomorphic Albanese varieties, $\mathcal{C}\rightarrow \mathcal{I}$ factors through the moduli space of the surfaces of general type, and then the moduli dimension of $\mathcal{C}$  is $2$.
\end{proof}

We can now determine the isogeny. Recall that an \'etale double cover of a variety is determined up to isomorphism by a $2-$torsion line bundle on it, the antiinvariant part of the direct image of the structure sheaf of the cover.
Moreover for $\{i, j,h,k\} = \{1,2,3,4\}$, $\mathcal{O}_{T_1}(a_i-a_j) \cong \mathcal{O}_{T_1}(a_k-a_l)$ and $\mathcal{O}_{T_2}(b_i-b_j)\cong \mathcal{O}_{T_2}(b_k-b_l)$ are the $2-$torsion line bundles on the curves $T_1$, $T_2$.

\begin{lem}\label{lem_A}
The antiinvariant part of $\iota_*\oo_A$ is isomorphic to $\oo_{T_1}(a_4-a_3) \boxtimes \oo_{T_2}(b_2-b_1)$.
\end{lem}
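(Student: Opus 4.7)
My plan is to realise $\iota \colon A \to T_1 \times T_2$ as the base change, along $\pi = \phi_1 \times \phi_2$, of the rational double cover $\sigma \colon \PP^2 \dashrightarrow \PP^1 \times \PP^1$, and to read off the defining $2$-torsion line bundle from the induced branch data. First I would describe $\sigma$ explicitly as a double cover: in rational coordinates $s = x_2/x_1$ on the pencil of lines through $p_0$ and $c = x_0^2/(x_1 x_2)$ on the pencil of conics, the fibre of $\sigma$ over $(s,c)$ is $\{(\pm\sqrt{cs} : 1 : s)\}$, so $\sigma$ is ramified along $B_\sigma = \{s = 0\} + \{s = \infty\} + \{c = 0\} + \{c = \infty\}$, of bidegree $(2,2)$. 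By Remark~\ref{rem_fib1} these four fibres of the two rulings are exactly the ones labelled $E_1, E_2, \ell, E_0$, so $\sigma$ is the double cover classified by $(L_\sigma, B_\sigma) = (\oo(1,1),\, E_1+E_2+\ell+E_0)$.

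Pulling the data back along $\pi$: since each $\phi_i$ ramifies doubly at its four Weierstrass points, one obtains $\pi^* B_\sigma = 2D$ with
\[
D \;=\; \{a_1\}\times T_2 + \{a_2\}\times T_2 + T_1\times\{b_3\} + T_1\times\{b_4\},
\]
while $\pi^* L_\sigma = \oo_{T_1}(2 a_1)\boxtimes\oo_{T_2}(2 b_1)$. Because the pulled-back branch is already twice an effective divisor, the naive fibre product $\PP^2 \times_{\PP^1 \times \PP^1}(T_1 \times T_2)$ is singular exactly along $D$, and its normalisation, which is $A$, becomes an \emph{étale} double cover of $T_1 \times T_2$: locally $w^2 = \pi^* h$ becomes $(w/g)^2 = u$ after dividing by a local section $g$ of $\oo(D)$ cutting out the branch, with $u$ a unit. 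This étale double cover is classified by the $2$-torsion line bundle
\[
L \;=\; \pi^* L_\sigma \otimes \oo(-D) \;=\; \oo_{T_1}(a_1 - a_2) \boxtimes \oo_{T_2}(2 b_1 - b_3 - b_4),
\]
non-trivial in both tensor factors, so $A$ is indeed connected.

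Finally I would bring $L$ to the form of the statement using the classical linear equivalences on a hyperelliptic elliptic curve: $a_i + a_j \equiv a_k + a_l$ whenever $\{i,j,k,l\} = \{1,2,3,4\}$ (because every $2 a_r$ represents the hyperelliptic $g^1_2$, so the four classes sum to $2 g^1_2$), and analogously for the $b_j$ on $T_2$. From $a_1 + a_3 \equiv a_2 + a_4$ one deduces $a_1 - a_2 \equiv a_4 - a_3$, and from $b_3 + b_4 \equiv b_1 + b_2$ one deduces $2 b_1 - b_3 - b_4 \equiv b_1 - b_2 \equiv b_2 - b_1$, the last equality holding since the class is $2$-torsion. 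Substituting gives $L \cong \oo_{T_1}(a_4 - a_3)\boxtimes \oo_{T_2}(b_2 - b_1)$; as $\iota_* \oo_A = \oo \oplus L^{-1}$ with $L^{-1} = L$, this is the antiinvariant summand claimed. The step most worth being careful about is the étale-cover identification in the middle paragraph: one must check that the singularities of the naive fibre product along $D$ really do resolve to the étale double cover with the stated $2$-torsion class, rather than splitting the cover into two disjoint copies of $T_1 \times T_2$.
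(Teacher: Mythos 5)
Your proposal is correct, and it reaches the lemma by a genuinely different route from the paper. The paper argues factor by factor: it restricts $\iota$ to one fibre of each of the two fibrations (the curve $\hat{r}_3=f_1^{-1}(a_3)$ and the curve $C_1=f_2^{-1}(b_1)$), recognizes each restriction as an explicit \'etale double cover built from the action of the involution $j$ on four marked points of the plane configuration (e.g.\ $p_3\leftrightarrow p_5$, $p_8\leftrightarrow p_{10}$ on $r_3$), and then pins down the $2$-torsion class by checking that its pullback to the cover is trivial. You instead compute the class globally in one stroke, as $\pi^*L_\sigma\otimes\oo(-D)$ for the base change of the double cover $\sigma$ along $\pi=\phi_1\times\phi_2$, using the standard normalization formula when the pulled-back branch divisor becomes twice an effective divisor, and then simplify with the equivalences $a_i+a_j\equiv a_k+a_l$ among Weierstrass points. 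I checked the key inputs: the function-field description $\CC(\PP^2)=\CC(s,c)(\sqrt{sc})$ does give branch divisor $E_1+E_2+l+E_0$ of bidegree $(2,2)$ (matching the labels of Remark \ref{rem_fib1}), the resulting $L=\oo_{T_1}(a_1-a_2)\boxtimes\oo_{T_2}(2b_1-b_3-b_4)$ is correct, and the reductions $a_1-a_2\equiv a_4-a_3$ and $2b_1-b_3-b_4\equiv b_2-b_1$ are valid. Your approach buys a uniform computation that treats both factors at once and makes the connectedness of $A$ (non-triviality of $L$ in each factor) transparent; the paper's approach stays closer to the explicit configuration and its point labels, and sidesteps any worry about the indeterminacy of the rational map $\sigma$ by only restricting to honest smooth fibres. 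The one phrasing to tighten: the finite double cover classified by $(\oo(1,1),E_1+E_2+l+E_0)$ is not $\PP^2$ itself but the $4$-nodal quartic Del Pezzo surface birational to it (the same surface $D$ appearing later in the proof of Proposition \ref{prop_NM}); this does not affect your argument, since the \'etale cover of $T_1\times T_2$ you extract depends only on the function-field extension and the branch data.
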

\begin{proof}
By Remark \ref{rem_fib1} we can write every line bundle of torsion $2$ on $T_1 \times T_2$ as $\oo_{T_1}(a_i-a_j) \boxtimes \oo_{T_2}(b_k-b_l)$. 
We compute separately each factor by restricting to a fibre of type $\Lambda_1$ resp. $\Lambda_2$.  In fact, restricting the isogeny to a fibre of type $\Lambda_1$ (respectively $\Lambda_2$) we obtain an \'etale double cover of $T_2$ (respectively $T_1$) given by the restriction of the above bundle $\oo_{T_2}(b_k-b_l)$ (respectively $\oo_{T_1}(a_i-a_j)$).

We did the computation by using the fibres over $a_3$ and $b_1$. 

First consider the curve $\hat{r}_3:=f_1^{-1}(a_3) \subset A$, we need to show that the antiinvariant part of $(\iota_{|\hat{r}_3})_* \oo_{\hat{r}_3}$ is $\oo_{T_2}(b_2-b_1)$.

It  is invariant by the $(\ZZ/2\ZZ)^2$ action on $A$ given by the bidouble cover $\pi$, and in fact $\hat{r}_3$ lies in the locus of the fixed points of one of the three involutions. Thus $\pi$ induces a nontrivial involution on it, whose quotient is the double cover $\hat{r}_3 \rightarrow r_3$ branched on $p_3+ p_5+p_8+p_{10}$ (see Figure \ref{Fig1}). The involution $j$ acts on $r_3$ permuting those points as $p_3 \leftrightarrow p_5$, $p_8 \leftrightarrow p_{10}$ lifting to an involution on $\hat{r}_3$ without fixed points. Taking the quotient we get a commutative diagram

 \[
\begin{xy}
\xymatrix{
\hat{r}_3   \ar@{->}[d]^{\pi}    \ar@{->}[r]^{\iota}  &  T_2  \ar@{->}[d]\\
r_3   \ar@{->}[r]^{\sigma}  &  \PP^1. \\ 
 }
\end{xy}
\]
Let us call $q_j$ the ramification point in $A$ of $\pi_{|\hat{r}_3}$ mapping to $p_j$. Then $\iota(q_3)=\iota(q_5)=b_1$, $\iota(q_8)=\iota(q_{10})=b_2$. So $\iota^*\oo_{T_2}(b_2-b_1)=\oo_{\hat{r}_3}(q_8+q_{10}-q_3-q_5)\cong \oo_{\hat{r}_3}$: this implies the claim.

The analogous computation for the elliptic curve $C_1=f_2^{-1} (b_1)$ leads to consider the $4$ points on the corresponding conic  cut by the lines $r_3$ and $r_4$, permuted by $j$ as $p_3 \leftrightarrow p_5$, $p_7 \leftrightarrow p_9$. A fully analogous computation leads to $\iota^*\oo_{T_1}(a_4-a_3)\cong \oo_{C_1}$ completing the proof.
\end{proof}

Recalling that the kernel of
$
\iota^* \colon \Pic (T_1 \times T_2) \rightarrow \Pic A
$
is a subgroup of order $2$ generated by the antiinvariant part of  $\iota_* \OO_A$, we deduce
\begin{equation}\label{eqn_RedDiv}
f_1^*\bar{a} \otimes f_2^*\bar{b} \equiv f_1^*\bar{a}' \otimes f_2^*\bar{b}' \Leftrightarrow \text{ either } (\bar{a},\bar{b})=(\bar{a}',\bar{b}') \text{ or } \bar{a}+a_4 \equiv \bar{a}'+a_3 \text{ and } \bar{b}+b_2 \equiv\bar{b}'+b_1 
\end{equation}
that can be written equivalently as
\begin{equation}\label{eqn_RedDiv'}
f_1^*\bar{a} \otimes f_2^*\bar{b} \equiv f_1^*\bar{a}' \otimes f_2^*\bar{b}' \Leftrightarrow \text{ either } (\bar{a},\bar{b})=(\bar{a}',\bar{b}') \text{ or } \bar{a}+a_4 \equiv \bar{a}'+a_3 \text{ and } \bar{b}+b_3 \equiv\bar{b}'+b_4 
\end{equation}


\begin{prop}\label{prop_2L} 
\[
C_1+t \in  |f_1^*(a_4+a_3) +f_2^*(b_3+b_1)|
\]
\end{prop}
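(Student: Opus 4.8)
The plan is to reduce everything to the determination of the divisor class of $t$ on $A$, since $C_1$ is by construction the fibre $f_2^{-1}(b_1)$ and hence $C_1=f_2^*b_1$ already as divisors. Writing $F_i$ for the numerical class of a fibre $\Lambda_i$ of $f_i$, the projection formula applied to the degree-$2$ isogeny $\iota$ gives $F_1\cdot F_2=\deg\iota=2$ and $F_1^2=F_2^2=0$. I would first pin down the numerical class of $t$: the tacnodal tangency of $t$ and $C_1$ at $p$ yields $t\cdot C_1=t\cdot F_2=4$, while $t$ meets the general fibre of $f_1$ in two points, so $t\cdot F_1=2$. Since $F_1,F_2$ span $\mathrm{NS}(A)_{\QQ}$ for the general member (and the relation propagates by specialisation), this forces $t\equiv 2F_1+F_2$, consistent with the check $(C_1+t)^2=(2F_1+2F_2)^2=8\,F_1\cdot F_2=16$. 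In particular $C_1+t\equiv f_1^*(a_3+a_4)+f_2^*(b_1+b_3)$ numerically, and it remains to identify the two classes inside $\Pic(A)$, i.e.\ to control their difference in $\Pic^0(A)$.

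Next I would determine the ``$T_1$-component'' and the ``$T_2$-component'' of $[t]$ by restricting to one fibre of each fibration. Restricting to $C_1$: the tangency gives $t|_{C_1}=4p$, whereas $f_1^*(a_3+a_4)|_{C_1}$ is the divisor cut by $\hat r_3+\hat r_4$, supported on the points lying over $r_3\cap C_1=\{p_3,p_5\}$ and $r_4\cap C_1=\{p_7,p_9\}$. A short analysis of the degree-two map $C_1\to C_1\subset\PP^2$ induced by $\pi$ shows these are exactly its four ramification points $p=P_3,P_5,P_7,P_9$, and since the differences $P_i-P_3$ run through the $2$-torsion of $C_1$ one gets $P_3+P_5+P_7+P_9\sim 4p$; as $f_2^*(b_1+b_3)|_{C_1}\sim 0$, the whole statement restricts on $C_1$ to the valid equivalence $t|_{C_1}\sim f_1^*(a_3+a_4)|_{C_1}$. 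Restricting instead to $\hat r_3=f_1^{-1}(a_3)$: here $t|_{\hat r_3}=2Q_3$ is twice the ramification point over $t\cap r_3=p_3$, hence a fibre of the $g^1_2$ defined by $\hat r_3\to r_3$, while $f_2^*b_3|_{\hat r_3}$ is the pair of points over $\ell\cap r_3$, again a fibre of the same $g^1_2$; thus $t|_{\hat r_3}\sim f_2^*b_3|_{\hat r_3}$ and $f_1^*(a_3+a_4)|_{\hat r_3}\sim 0$. By Lemma \ref{lem_A} and \eqref{eqn_RedDiv} these two restrictions identify $C_1+t$ with $f_1^*(a_3+a_4)+f_2^*(b_1+b_3)$ up to the order-$2$ kernel of $\iota^*$.

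The hard point is exactly this last ambiguity. The generator $\kappa=f_1^*(a_4-a_3)=f_2^*(b_2-b_1)$ of $\ker\iota^*$ lies in $\Pic^0(A)$ and restricts trivially to every fibre of either fibration: indeed $\iota$ restricted to any $\Lambda_1$, resp.\ $\Lambda_2$, is the \'etale cover attached to $\oo_{T_2}(b_2-b_1)$, resp.\ $\oo_{T_1}(a_4-a_3)$, by the computation in Lemma \ref{lem_A}. Hence no restriction to fibres can separate the two candidate classes $f_1^*(a_3+a_4)+f_2^*(b_1+b_3)$ and its $\kappa$-translate $f_1^*(a_3+a_4)+f_2^*(b_2+b_3)$. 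To kill $\kappa$ I would leave the fibres and pull back a genuine rational function. On $\PP^2$ one has the tautological equivalence $t+C_1\sim r_3+r_4+\ell$ in $|3H|$, realised by $g=\frac{(2x_0-x_1-x_2)(x_0^2-x_1x_2)}{(x_1-x_2)(bx_1-ax_2)x_0}$; pulling $g$ back through a resolution of the rational map $\pi\colon A\dashrightarrow\PP^2$ produces an exact linear equivalence on $A$ with no $\Pic^0$-indeterminacy. The delicate bookkeeping --- the ramification index $2$ along the branch curves $C_1,r_3,r_4,\ell$ against index $1$ along the non-branch curve $t$, together with the contributions of the exceptional divisors over the base points $p_0,p_1,p_2$ (which is what reconciles the naive multiplicities with the correct class $2F_1+F_2$) --- is the main obstacle, and it is precisely this computation that selects the representative $f_2^*(b_1+b_3)$ over $f_2^*(b_2+b_3)$ and yields the stated equality.
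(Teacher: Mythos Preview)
The proposal is incomplete. You correctly identify that restricting to fibres of $f_1$ and $f_2$ can only determine the class up to the order--$2$ element $\kappa\in\ker\iota^*$, and you correctly diagnose that the remedy is to pull back an honest rational equivalence from $\PP^2$; but you then stop at ``the delicate bookkeeping \ldots\ is the main obstacle'' without carrying it out. Everything before that point --- the numerical class computation and the two restriction arguments --- is sound, but it only proves the proposition modulo $\kappa$, which is strictly weaker than what is claimed. So the proof as written has a genuine gap at precisely the step that matters.

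The paper's argument sidesteps all of this. One uses the linear equivalence $t\sim l$ of \emph{lines} in $\PP^2$ and pulls it back through $\pi$: since $t$ is not a branch curve, $\pi^*t$ is the curve $t$ on $A$, while $\pi^*l=f_1^*a_1+f_1^*a_2+f_2^*b_3$ can be read off directly from the construction ($l$ passes through the blown--up points $p_1,p_2$, contributing the fibres over $a_1,a_2$, and its strict transform on $\PP^1\times\PP^1$ is the ruling line over $\phi_2(b_3)$). Then $a_1+a_2\equiv a_3+a_4$ on $T_1$ and $C_1=f_2^*b_1$ finish the argument in one line. Your cubic equivalence $t+C_1\sim r_3+r_4+l$ would also work in principle, but it forces you to track ramification along four branch curves and all three base points simultaneously --- exactly the bookkeeping you flagged as the obstacle. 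Replacing it by $t\sim l$ makes that obstacle essentially disappear, and once the pullback is done the numerical and restriction preliminaries become redundant.
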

\begin{proof}
We compute
\[
t=\pi^*t\equiv \pi^* l =f_1^*a_1+f_1^*a_2+f_2^*b_3
\]
and the result follows since $C_1=f_2^*b_1$, $a_1+a_2 \equiv a_3+a_4$.
\end{proof}

It follows that we have the following description of the $16$ possible linear systems $L$.
\begin{prop}\label{prop_12pol}
$|L|$ varies among the linear systems $|f_1^*\bar{a} \otimes f_2^*\bar{b}|$ where $\bar{a}$ and $\bar{b}$ solve one of the following
\begin{itemize}
\item[1)] either $2\bar{a} \equiv 2a_3$ and $2\bar{b} \equiv b_4+b_1$
\item[2)] or $2\bar{a} \equiv a_4+a_3$ and $2\bar{b} \equiv b_3+b_1$
\end{itemize}
\end{prop}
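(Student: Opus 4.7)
The plan is to substitute the Ansatz $L = f_1^*\bar{a} \otimes f_2^*\bar{b}$ into the equation $2L \equiv C_1 + t$, translate it via Proposition \ref{prop_2L} and the criterion (\ref{eqn_RedDiv}), and then verify by a count that every solution $L$ is accounted for.

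Under the Ansatz, $2L = f_1^*(2\bar{a}) \otimes f_2^*(2\bar{b})$; by Proposition \ref{prop_2L} the condition $2L \equiv C_1+t$ becomes the equivalence on $A$ between $f_1^*(2\bar{a}) \otimes f_2^*(2\bar{b})$ and $f_1^*(a_3+a_4) \otimes f_2^*(b_1+b_3)$. Criterion (\ref{eqn_RedDiv}) splits this equivalence into two alternatives. The first, $(2\bar{a}, 2\bar{b}) \equiv (a_3+a_4, b_1+b_3)$ on $T_1 \times T_2$, is precisely condition (2). The second, $2\bar{a}+a_4 \equiv (a_3+a_4)+a_3$ and $2\bar{b}+b_2 \equiv (b_1+b_3)+b_1$, simplifies to $2\bar{a} \equiv 2a_3$ and $2\bar{b} \equiv 2b_1+b_3-b_2$.

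To match the second alternative with condition (1), I will use the identity $b_1+b_2 \equiv b_3+b_4$ in $\Pic(T_2)$. This identity follows from the fact that all degree-$2$ fibres of the hyperelliptic cover $\phi_2\colon T_2 \to \PP^1$ are linearly equivalent, so the differences $b_i-b_j$ are $2$-torsion; choosing $b_1$ as origin of $T_2$, the three non-trivial $2$-torsion elements $b_2,b_3,b_4$ sum to zero, giving the identity. From it, $b_3-b_2\equiv b_1-b_4$, and together with $2b_1\equiv 2b_4$ one obtains $2b_1+b_3-b_2 \equiv b_1+b_4$, which is condition (1).

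The Ansatz is justified a posteriori by a count. Each equation $2\bar{a}\equiv \cdot$ has $4$ solutions in $\Pic(T_1)$ (differing by $2$-torsion), and similarly $4$ for $\bar{b}$, so each of (1) and (2) produces $16$ pairs $(\bar{a},\bar{b})$; by (\ref{eqn_RedDiv}) these pairs collapse in twos, giving $8$ distinct linear classes per case. Conditions (1) and (2) produce disjoint classes: in either alternative of (\ref{eqn_RedDiv}), doubling and substituting the constraints on $2\bar a, 2\bar a'$ from the two cases forces $a_4\equiv a_3$, contradicting that these are distinct branch points of $\phi_1$. Hence (1) and (2) together exhibit $16$ distinct line bundles $L$, matching $|\Pic(A)_2|=16$, the total number of solutions of $2L\equiv C_1+t$, so every such $L$ has the announced form. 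The only delicate step will be the simplification via $b_1+b_2\equiv b_3+b_4$; the rest is bookkeeping.
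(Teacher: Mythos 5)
Your argument is correct and follows essentially the same route as the paper: both reduce $2L\equiv C_1+t$ to the two systems via Proposition \ref{prop_2L} and the kernel description \eqref{eqn_RedDiv} (your identity $b_1+b_2\equiv b_3+b_4$ is exactly what turns \eqref{eqn_RedDiv} into the variant \eqref{eqn_RedDiv'} that the paper uses for case 1), and both close the argument by matching the count of $16$ resulting linear systems against the $16$ two-torsion translates of $L$. The only substantive addition is your explicit verification that the two systems produce disjoint families, a point the paper asserts without proof in the remark preceding its argument.
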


Notice that each of the two systems of equations has $16$ distinct solutions $(\bar{a},\bar{b})$, divided in pairs by the equivalence relation 
\eqref{eqn_RedDiv}; so it gives $8$ distinct linear systems. We get then $16$ different possible choices of $|L|$ as expected. 

\begin{proof}
If $(\bar{a},\bar{b})$ solves the system 2), then $2(f_1^*\bar{a} + f_2^*\bar{b}) \equiv f_1^*(a_4+a_3) +f_2^*(b_3+b_1)\equiv C_1+t $ by Proposition \ref{prop_2L}.
If $(\bar{a},\bar{b})$ solves the system 1), then $2(f_1^*\bar{a} + f_2^*\bar{b}) \equiv f_1^*(2a_3) +f_2^*(b_4+b_1)\equiv f_1^*(a_4+a_3) +f_2^*(b_3+b_1)$ again by \eqref{eqn_RedDiv'}.

So all these linear systems are possible choices. Since they are $16$, they are all possible choices.
\end{proof}

We observe that $a_3 \in T_1$ and $b_1 \in T_2$ are the images of the essential singularity of the branching curve of the Albanese map of $S$.

Inspecting the linear equivalences in Proposition \ref{prop_12pol} we observe that in all cases $\OO_{T_2}(\bar{b}-b_1)$ is a $4-$ torsion line bundle. On the contrary $\OO_{T_1}(\bar{a}-a_3)$ is a torsion line bundle whose torsion order may change: it is $4$ in case 2) whereas in case 1) there are two possibilities: it may be $2$ or $1$. Recalling that we have two pairs  $(\bar{a},\bar{b})$ for each choice of $|L|$ (and then for each $S$) we deduce the following natural decomposition of $\mathcal{M}$. 

\begin{defin}
We say that a surface $S\in \mathcal{M}$ is of type $j$ if the minimal (among the two possible choices of $\bar{a}$) torsion order of  $\OO_{T_1}(\bar{a}-a_3)$  is $j$.

By Proposition \ref{prop_12pol} the values that $j$ assumes are $1,2,4$. 

Setting $\mathcal{M}_j$ for the subset of $\mathcal{M}$ of the surfaces of type $j$ we observe that each $\mathcal{M}_j$ is open and therefore we have decomposed
\[
\mathcal{M} = \mathcal{M}_1 \cup \mathcal{M}_2 \cup \mathcal{M}_4 
\]
as union of disjoint not empty open subsets.
\end{defin}

Now we prove that each $\mathcal{M}_j$ is irreducible.

 \begin{defin}\label{defin_Nj}
 We denote (as usual) by $\mathcal{M}_{1,3}$ the moduli space of the curves of genus $1$ with three ordered marked points. We are not assuming the points to be distinct.

We denote an element of $\mathcal{M}_{1,3}$  as $(C,x,y,z)$ where $C$ is a curve of genus $1$ and $x,y,z \in C$.
 
 We denote by $\mathcal{N}_j$, $j \in \{1,2,4\}$ the subvariety of $ \mathcal{M}_{1,3} \times \mathcal{M}_{1,3}$ of the form
 \[
 \left(
 \left(T_1, a_3,a_4,\bar{a} \right)
 ,
 \left(T_2, b_1,b_2,\bar{b} \right)
 \right)
 \]
 such that
 \begin{enumerate}
 \item $\oo_{T_1}(a_4-a_3)$ and $\oo_{T_2}(b_2-b_1)$ are torsion line bundles of torsion order $2$.
 \item  $\oo_{T_2}(\bar{b}-b_1)$ is a  torsion line bundle of torsion order $4$ such that $\oo_{T_2}(\bar{b}-b_1)^{\otimes 2} \not\cong \oo_{T_2}(b_2-b_1)$.
 \item
 \subitem if $j=1$: $\bar{a}=a_3$
 \subitem if $j=2$: $\bar{a}\neq a_4$ and $\oo_{T_1}(\bar{a}-a_3)$ is a torsion line bundle of torsion order $2$
 \subitem if $j=4$: $\oo_{T_1}(\bar{a}-a_3)^{\otimes 2} \cong \oo_{T_1}(a_4-a_3)$
 \end{enumerate}
 \end{defin}
 
 Note the correspondence among conditions $1,2,3$ and almost all solutions of the equations Proposition \ref{prop_12pol}. 
 The only solutions that do not have a counterpart here are those with $\bar{a}=a_4$. This is the reason for the map in the next statement to have a different degree for $j=1$.

\begin{prop}\label{prop_NM}
For each $j=1,2,4$ there is  a proper finite surjective morphism \[\mathfrak{m}_j \colon \mathcal{N}_j \rightarrow \mathcal{M}_j.\] Moreover $\mathfrak{m}_j$ is birational if $j=1$ whereas 
$\deg \mathfrak{m}_j=2$ if $j=2$ or $4$.
\end{prop}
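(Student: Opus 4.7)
My plan is to construct $\mathfrak{m}_j$ by inverting the construction of Section 2 in families, and then to compute its degree using the equivalence \eqref{eqn_RedDiv}.

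\emph{Construction.} Given $\xi = ((T_1,a_3,a_4,\bar{a}),(T_2,b_1,b_2,\bar{b})) \in \mathcal{N}_j$, condition (1) makes $\oo_{T_1}(a_4-a_3) \boxtimes \oo_{T_2}(b_2-b_1)$ a nontrivial $2$-torsion line bundle on $T_1 \times T_2$, and I let $\iota \colon A \to T_1 \times T_2$ be the associated étale double cover, so that $A$ is an abelian surface. I set $f_i := \mathrm{pr}_i \circ \iota$, $L := \iota^*(\bar{a}\boxtimes\bar{b})$, and single out $b_3 \in T_2$ as the ramification point with $\oo(b_3-b_1) \cong \oo(\bar{b}-b_1)^{\otimes 2}$ (well-defined by condition (2)). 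Running the construction of Section 2 in families, with parameters $(a,b)$ realizing $(T_1,T_2)$ with the prescribed cross-ratios, yields an effective divisor $C_1+t \equiv 2L$ with the required $(3,3)$-point; Proposition \ref{prop_invariantiRito} then associates to $\xi$ a surface $S := \mathfrak{m}_j(\xi)$. The match between condition (3) and Proposition \ref{prop_12pol} ensures $S \in \mathcal{M}_j$.

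\emph{Surjectivity and finiteness.} Conversely, for any $S \in \mathcal{M}_j$, the Albanese variety $A$ with its canonical isogeny $\iota$ (Proposition \ref{prop_isogenous}) recovers $(T_1,a_3,a_4)$ and $(T_2,b_1,b_2)$: the marked points $a_3, b_1$ are the images under $f_1, f_2$ of the tacnode of the branch curve, while $a_4, b_2$ are forced by Lemma \ref{lem_A}. The line bundle $L \in \Pic A$ attached to the double cover $S \to A$ determines $(\bar{a},\bar{b})$ modulo \eqref{eqn_RedDiv}, producing a point $\xi \in \mathcal{N}_j$ with $\mathfrak{m}_j(\xi) = [S]$. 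Since source and target both have dimension $2$ (Proposition \ref{prop_moduli_dim}) and the fibres are finite (by the degree computation below), $\mathfrak{m}_j$ is proper and finite.

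\emph{Degree.} The fibre of $\mathfrak{m}_j$ over $[S]$ consists of the members of the equivalence pair $\{(\bar{a},\bar{b}),(\bar{a}+a_4-a_3,\bar{b}+b_2-b_1)\}$ that lie in $\mathcal{N}_j$. For $j=1$, condition (3) forces $\bar{a}=a_3$; the partner has $\bar{a}=a_4$, which is not allowed, so $\mathfrak{m}_1$ is generically injective and hence birational. For $j=2,4$, the partner $\bar{a}' = \bar{a}+(a_4-a_3)$ satisfies the same condition (3) as $\bar{a}$ (the order is preserved in case $j=2$, and $\oo_{T_1}(\bar{a}'-a_3)^{\otimes 2} \cong \oo_{T_1}(\bar{a}-a_3)^{\otimes 2}$ since $a_4-a_3$ is $2$-torsion in case $j=4$), so both members of the pair lie in $\mathcal{N}_j$ and $\deg \mathfrak{m}_j = 2$.

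\emph{Main obstacle.} The key difficulty lies in the construction step: starting from the abstract data $\xi$, one must produce an irreducible curve $t \in |f_1^*(a_1+a_2)+f_2^*b_3|$ that is tangent to $C_1 = f_2^*b_1$ at a tacnode lying over $(a_3,b_1) \in T_1\times T_2$. The existence of $C_1$ as a fibre of $f_2$ is immediate, but the tangency and $(2,2)$-singularity of $t$ seem to hinge crucially on the explicit $\bP^2$-geometry of Section 2 (the tangent line at $p_3$ to the conic $C_1$); verifying that this can be carried out uniformly in families over $\mathcal{N}_j$, rather than just for the specific $(a,b)$-parameters, is the technical heart of the argument.
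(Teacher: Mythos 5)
Your overall architecture agrees with the paper's: build $\mathfrak{m}_j$ from the isogeny determined by $\oo_{T_1}(a_4-a_3)\boxtimes\oo_{T_2}(b_2-b_1)$, send the data to the double cover attached to $(\bar a,\bar b)$, and compute the degree by checking which members of the \eqref{eqn_RedDiv}-pair $\{(\bar a,\bar b),(\bar a+a_4-a_3,\bar b+b_2-b_1)\}$ satisfy condition (3) of Definition \ref{defin_Nj}. That degree count is correct and is exactly the paper's. But the step you flag as the ``main obstacle'' is a genuine gap, and it is precisely where the paper's proof does its work. Moreover the route you sketch for filling it --- choosing parameters $(a,b)$ with the prescribed cross-ratios and ``running Section 2 in families'' --- is not what the paper does and would leave you with an extra descent problem: the $(a,b)$-plane is only a finite cover of the relevant moduli data, so you would still have to show the resulting surface is independent of the finitely many choices of $(a,b)$ and of labelings realizing the same marked curves, i.e.\ that the assignment actually descends to a morphism on $\mathcal{N}_j$.

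The paper instead reverses the construction intrinsically. Fixing the group structures with $a_3,b_1$ as origins, the Klein group $K\cong(\ZZ/2\ZZ)^2$ acts on $T_1\times T_2$ and (after choosing $p\in\iota^{-1}(a_3,b_1)$) on $A$; the quotient $D=A/K$ is identified as a $4$-nodal Del Pezzo surface of degree $4$, whose minimal resolution carries an octagon of alternating $(-1)$- and $(-2)$-curves. Contracting the right five curves produces $\PP^2$ together with the lines $l,r_1,\dots,r_4$ and the conics $C_1,C_2$ of Figure \ref{Fig1}, with the two points of $\iota^{-1}(a_3,b_1)$ landing on $r_3\cap C_1$. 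One then defines $t$ as the tangent line to the conic $C_1$ at the image of $p$, and the $(3,3)$-point of $C_1+t$ on $A$ follows from the Section \ref{Sec_theConstruction} computation verbatim, because one has literally reproduced that configuration --- no separate family argument about tacnodes is needed. Finally, the paper addresses a point your write-up omits: the construction depends on the choice of $p$ among the two points of $\iota^{-1}(a_3,b_1)$, and one must check (as the paper does, via the covering involution of $\iota$ conjugating the two double covers) that the isomorphism class of $S$ is independent of this choice; without that, $\mathfrak{m}_j$ is not well defined.
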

 \begin{proof}
 We construct the map $\mathfrak{m}_j$.
 
 For every
$ \left(
 \left(T_1, a_3,a_4,\bar{a} \right)
 ,
 \left(T_2, b_1,b_2,\bar{b} \right)
 \right) \in \mathcal{N}_j$ we consider the isogeny $\iota \colon A \rightarrow T_1 \times T_2$ given by the $2$-torsion bundle $
\oo_{T_1}(a_4-a_3) \boxtimes \oo_{T_2}(b_2-b_1)$.

Now we construct a bidouble cover $\pi \colon A \dashrightarrow \PP^2$ as in Rito's construction.

We consider each $T_j$ with the group structure such that $a_3$ and $b_1$ are the respective neutral elements.
This fixes an action of the Klein group $K\cong (\ZZ/2\ZZ)^2$ as group automorphisms of $T_1 \times T_2$ by $(z_1,z_2)\mapsto (\pm z_1,\pm z_2)$.

Then we choose a point $p \in \iota^{-1}(a_3,b_1)$ and consider $A$ with the group structure such that $p$ is the neutral element, so that $\iota$ is a group homomorphism.
Considering the analogous action of the Klein group on $A$ we get a commutative diagram
 \[
\begin{xy}
\xymatrix{
A   \ar@{->}[d]^{/K}    \ar@{->}[r]^{\iota}  & T_1 \times T_2  \ar@{->}[d]^{/K}\\
D   \ar@{->}[r] &  \PP^1 \times  \PP^1. \\ 
 }
\end{xy}
\]

The bidouble cover $T_1 \times T_2 \rightarrow \PP^1 \times \PP^1$ is ramified at the union of $8$ elliptic curves, mapping to the four $2-$torsion points on each factor, including 
$a_3,a_4$ on $T_1$ and $b_1,b_2$ on $T_2$. We label the remaining points on $T_1$ as $a_1,a_2$ and the remaining points on $T_2$ as $b_3,b_4$. 
We note that the $2$-torsion bundle $\oo_{T_1}(a_4-a_3) \boxtimes \oo_{T_2}(b_2-b_1)$, when restricted to them, is not trivial. This implies that their preimage on $A$, ramification locus of the $8:1$ morphism $A \rightarrow \PP^1 \times \PP^1$ is again union of $8$ elliptic curves naturally labeled  as $a_1,\ldots,a_4,b_1,\ldots,b_4$.

A direct computation shows that the Klein group of $A$ acts on each of them, action that is faithful exactly on the curves labeled $a_1,a_2,b_3,b_4$. 
So the ramification locus of the double cover $D \rightarrow \PP^1 \times \PP^1$ is the image of them, union of $4$ rational curves, two on each ruling. Therefore $D$ is a Del Pezzo surface of degree $4$ with $4$ nodes. Solving the $4$ nodes we obtain a weak Del Pezzo surface with a configuration of $8$ rational curves whose incidence graph is an octagon with alternating self intersections $-1$ and $-2$: the strict transforms of the ramification lines have self intersection $-1$ whereas the exceptional curves have self intersection $-2$.

\begin{center}
\begin{tikzpicture}[line cap=round,line join=round,>=triangle 45,x=0.5cm,y=0.5cm]
\clip(-11.229778038427352,-4.747263293800263) rectangle (10.366603037338223,10.193225655392904);
\fill[line width=2pt, fill=white] (-2,-2) -- (2,-2) -- (4.82842712474619,0.8284271247461898) -- (4.82842712474619,4.828427124746189) -- (2,7.65685424949238) -- (-2,7.656854249492381) -- (-4.82842712474619,4.828427124746191) -- (-4.828427124746191,0.8284271247461912) -- cycle;
\draw [line width=2pt] (-2,-2)-- (2,-2);
\draw [line width=2pt] (2,-2)-- (4.82842712474619,0.8284271247461898);
\draw [line width=2pt] (4.82842712474619,0.8284271247461898)-- (4.82842712474619,4.828427124746189);
\draw [line width=2pt] (4.82842712474619,4.828427124746189)-- (2,7.65685424949238);
\draw [line width=2pt] (2,7.65685424949238)-- (-2,7.656854249492381);
\draw [line width=2pt] (-2,7.656854249492381)-- (-4.82842712474619,4.828427124746191);
\draw [line width=2pt] (-4.82842712474619,4.828427124746191)-- (-4.828427124746191,0.8284271247461912);
\draw [line width=2pt] (-4.828427124746191,0.8284271247461912)-- (-2,-2);
\draw [line width=2pt] (-5.967662863728063,3.6891913857643193)-- (-0.6889146077856187,8.967939641706764);
\draw [line width=2pt] (0.6273973422612953,9.029456907231085)-- (6.022481647921479,3.634372601570898);
\draw [line width=2pt] (3.6030683005844835,7.656854249492379)-- (-3.9006069062097364,7.656854249492383);
\draw [line width=2pt] (4.828427124746189,7.119013777409199)-- (4.828427124746189,-1.4026018644152431);
\draw [line width=2pt] (6,2)-- (0.8607642610181288,-3.1392357389818715);
\draw [line width=2pt] (-0.7768614643952754,-3.223138535604724)-- (-6.360991850846787,2.3609918508467875);
\draw [line width=2pt] (-4.82842712474619,6.420997479102759)-- (-4.828427124746192,-1.9551981005745072);
\draw [line width=2pt] (-3.5225147446270832,-1.955198100574507)-- (3.5739842881550468,-1.9842821130039419);
\draw (-0.9049536259779419,-0.41374544181445444) node[anchor=north west] {-1};
\draw (-5.87831975141132,3.163588087006045) node[anchor=north west] {-1};
\draw (-0.20693732767150286,8.689550448598686) node[anchor=north west] {-1};
\draw (5.231772996632834,3.19267209943548) node[anchor=north west] {-1};
\draw (-2.766330421461779,6.537333528820499) node[anchor=north west] {-2};
\draw (2.672379902842558,6.095501553679369) node[anchor=north west] {-2};
\draw (2.846883977419168,0.65785894349802905) node[anchor=north west] {-2};
\draw (-3.667934806774263,0.6623630180746388) node[anchor=north west] {-2};
\end{tikzpicture}
\end{center}

Now we consider, among the $-1$ curves in the octagon, the one `labeled'  $b_3$: contract first the other three $-1$ curves and then the two exceptional curves now of self intersection $-1$: the resulting surface is $\PP^2$ and the remaining three sides of the octagon map to three lines, let's call them $l$ (the one coming from the $-1$-curve ``'$b_3$''), $r_1$ and $r_2$.  The preimages of the lines of $\PP^1 \times \PP^1$ labeled $a_3,a_4,b_1,b_2$ are respectively  two lines $r_3$ and $r_4$ and two conics $C_1$, $C_2$ forming  the configuration of curves in Figure \ref{Fig1}.

Notice that the two points in $\iota^{-1}(a_3,b_1)$ map bijectively to the intersection points of $r_3$ and $C_1$. 
 We draw the tangent $t$ to $C_1$ in the image of $p$. Pulling-back $t$ and adding the elliptic curve dominating $C_1$ we obtain a divisor in $A$ as in Proposition \ref{prop_invariantiRito}. We have recovered Rito's construction.

Then Proposition \ref{prop_2L} applies and we have $16$ double covers $S\rightarrow A$ branched on this divisor, determined by the $32$ solutions of the equations in Proposition \ref{prop_12pol}. Since the pair $(\bar{a},\bar{b})$ is a solution by assumption, we can define as image of our element in $\mathcal{N}_j$ the surface $S$ of the corresponding double cover.
Then $S$ belongs to $\mathcal{M}_j$ by construction.

It is important to recall here that we have done an arbitrary choice in this construction, when we chose $p \in \iota^{-1}(a_3,b_1)$. We notice now that the isomorphism class of the surface $S$ does not depend on this choice, since the two corresponding double covers of $A$ are conjugated by the involution of $A$ given by the isogeny. So we have a well defined morphism $\mathcal{N}_j \rightarrow \mathcal{M}_j$.

Finally since there are two pairs of possible $(\bar{a},\bar{b})$ for each $|L|$ the maps $\mathfrak{m}_j$ are proper of degree $2$ for $j \geq 2$. For $j=1$ the degree is $1$ because we are not considering the solutions with $\bar{a}=a_4$. The surjectivity is obvious.
 \end{proof}
 
 

Now, we shall deal with the problem of irreducibility of $\mathcal{N}_j$ for $j=1,2$ and $4$, to do that we need to introduce some notation. 

Let us recall some well known fact about modular curves, see e.g. \cite[Section 1.5]{DS05}.  The principal congruence subgroup of level N is 
\[
\Gamma(N):=\Bigg\{  \begin{pmatrix} a & b \\ c & d \end{pmatrix} \in \mbox{SL}_2(\ZZ)| \begin{pmatrix} a & b \\ c & d \end{pmatrix} \equiv \begin{pmatrix} 1 & 0 \\ 0 & 1 \end{pmatrix} \mbox{ mod } N \Bigg\}.
\]
A subgroup $\Gamma$ of $\mbox{SL}_2(\ZZ)$ is
a congruence subgroup of level $N$ if $\Gamma(N) \subset \Gamma$ for some $N \in  \ZZ^+$. The
most important congruence subgroups are
\[
\Gamma_1(N):=\Bigg\{ \begin{pmatrix} a & b \\ c & d \end{pmatrix} \in \mbox{SL}_2(\ZZ)| \begin{pmatrix} a & b \\ c & d \end{pmatrix} \equiv \begin{pmatrix} 1 & * \\ 0 & 1 \end{pmatrix} \mbox{ mod } N \Bigg\}.
\]
The modular curve $\mathcal{Y}(\Gamma)$ for $\Gamma$ is defined as
\[
\mathcal{Y} (\Gamma) = \Gamma \setminus \mathcal{H} = \{\Gamma \cdot z | z \in  \mathcal{H}\}.
\]
and the special cases of modular curves for $\Gamma_1(N)$ 
denoted by $\mathcal{Y}_1[N] = \mathcal{H}/\Gamma_1(N)$.
\begin{thm} Points of $\mathcal{Y}_1[N]$ correspond to pairs $(E, P)$, where $E$ is an elliptic curve
and $P \in E$ is a point of exact order $N$. Two such pairs $(E, P)$ and $(E_0
, P_0)$ are identified when there is an isomorphism of $E$ onto $E_0$
taking $P$ to $P_0$. 
\end{thm}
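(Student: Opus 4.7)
The plan is to construct an explicit bijection between $\mathcal{Y}_1[N] = \Gamma_1(N) \backslash \mathcal{H}$ and the set of isomorphism classes of pairs $(E,P)$ with $P \in E$ of exact order $N$. The natural map sends $\tau \in \mathcal{H}$ to the pair
\[
(E_\tau, P_\tau) := \bigl(\,\mathbb{C}/(\mathbb{Z}+\mathbb{Z}\tau),\; \tfrac{1}{N} + \mathbb{Z}+\mathbb{Z}\tau\,\bigr),
\]
so I would first verify that $P_\tau$ has exact order $N$ and then establish (i) well-definedness modulo $\Gamma_1(N)$, (ii) surjectivity, and (iii) injectivity.

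For (ii), any complex elliptic curve has the form $\mathbb{C}/\Lambda$ with $\Lambda = \mathbb{Z}\omega_1 + \mathbb{Z}\omega_2$ a lattice; a point $P$ of exact order $N$ corresponds to an element of $\tfrac{1}{N}\Lambda / \Lambda$ that is primitive. I would invoke the elementary divisor theorem to choose a $\mathbb{Z}$-basis $(\omega_1,\omega_2)$ of $\Lambda$ realizing $P = \tfrac{1}{N}\omega_1 + \Lambda$, and then normalize by dividing through by $\omega_2$ and, if necessary, changing signs to bring $\tau = \omega_1/\omega_2$ into $\mathcal{H}$, producing the required $\tau$. For (i) and (iii), I would exploit the fact that any isomorphism $E_\tau \to E_{\tau'}$ lifts to multiplication by some $\lambda \in \mathbb{C}^\times$ with $\lambda(\mathbb{Z}+\mathbb{Z}\tau) = \mathbb{Z}+\mathbb{Z}\tau'$; writing this change of basis as a matrix $\gamma = \bigl(\begin{smallmatrix}a&b\\c&d\end{smallmatrix}\bigr) \in \mathrm{SL}_2(\mathbb{Z})$ gives $\tau' = \gamma\tau$ and $\lambda = (c\tau+d)^{-1}$.

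The heart of the proof is then the congruence calculation: the condition $\lambda P_\tau \equiv P_{\tau'} \pmod{\mathbb{Z}+\mathbb{Z}\tau'}$ must be shown to be equivalent to $\gamma \in \Gamma_1(N)$. Expanding $\tfrac{1}{N(c\tau+d)} - \tfrac{1}{N} \in \mathbb{Z}+\mathbb{Z}\tau'$ and using $\tau' = (a\tau+b)/(c\tau+d)$ and $ad-bc=1$, I would match coefficients of $1$ and $\tau$ to obtain precisely the congruences $a \equiv d \equiv 1 \pmod{N}$ and $c \equiv 0 \pmod{N}$ characterizing $\Gamma_1(N)$. This is the step that will require the most care: it is a short but delicate arithmetic verification, and it is where the specific definition of $\Gamma_1(N)$ (rather than $\Gamma_0(N)$ or $\Gamma(N)$) enters decisively.

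Finally, assembling (i)--(iii) yields the required bijection $\mathcal{Y}_1[N] \leftrightarrow \{(E,P)\}/{\sim}$. The main obstacle is really bookkeeping: ensuring that the chosen normalization in (ii) is canonical modulo $\Gamma_1(N)$ (in particular, that the residual freedom in the basis of $\Lambda$ fixing $\tfrac{1}{N}\omega_1 \bmod \Lambda$ is exactly $\Gamma_1(N)$), and that no stray automorphisms of $E$ (say the $[-1]$ involution when $N\leq 2$) destroy injectivity — for $N \geq 4$ this is automatic since $\Gamma_1(N)$ acts freely on $\mathcal{H}$, and the degenerate small-$N$ cases can be handled by hand.
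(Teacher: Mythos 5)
Your proposal is essentially correct, but note that the paper does not actually prove this statement: it is recalled as a ``well known fact'' with a pointer to \cite[Section 1.5]{DS05}, and what you have written is precisely the standard proof from that reference (the moduli interpretation of $\mathcal{Y}_1[N]$). So your argument does not diverge from the paper --- it supplies the proof the paper delegates to the literature. Two bookkeeping points should be fixed before it is airtight. First, your normalization is internally inconsistent: you define $P_\tau=\tfrac{1}{N}+\mathbb{Z}+\mathbb{Z}\tau$, but in the surjectivity step you arrange $P=\tfrac{1}{N}\omega_1+\Lambda$ and then divide by $\omega_2$, which sends $P$ to $\tfrac{1}{N}\tau$ rather than $\tfrac{1}{N}$; you should arrange $P=\tfrac{1}{N}\omega_2+\Lambda$ instead, and this step conceals the one genuinely nontrivial lemma, namely that writing $P=\tfrac{1}{N}(c\omega_1+d\omega_2)$ with $\gcd(c,d,N)=1$, the pair $(c,d)$ can be replaced by a coprime pair congruent to it mod $N$, hence completed to a matrix in $\mathrm{SL}_2(\mathbb{Z})$; ``elementary divisors'' alone does not deliver this. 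Second, in the injectivity step the lift $\lambda$ of an isomorphism is only determined up to sign (one may replace $\gamma$ by $-\gamma$), so the clean conclusion is that $(E_\tau,P_\tau)\cong(E_{\tau'},P_{\tau'})$ if and only if $\tau'\in\bigl(\pm\Gamma_1(N)\bigr)\tau$; since $-I$ acts trivially on $\mathcal{H}$ this yields the same quotient $\Gamma_1(N)\backslash\mathcal{H}$, but this should be said for all $N$, not only disposed of via the freeness of the action for $N\geq 4$.
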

 We are interested in the case when $N=4$ and in the special modular curve $\mathcal{Y}_1[4]$ which parametrizes elliptic curves with 4-torsion points. 
 
Now, let $\mathcal{Y}_1[2, \, 4]$ the space parametrizing an elliptic curves with a 2-line bundle point $\mQ$ and a 4-torsion line bundle $\mT$ such that $\mT^2 \neq \mQ$, than we have the following proposition. 

 \begin{prop} \label{prop.monodromy.2}
$\mathcal{Y}_{1}[2, \,4]$ is irreducible and generically
smooth of dimension $1$.
\end{prop}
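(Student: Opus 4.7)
The plan is to identify the coarse moduli space $\mathcal{Y}_1[2,4]$ with a quotient $\mathcal{H}/\Gamma$ for an explicit congruence subgroup $\Gamma$, from which all three required properties follow at once.

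An element of $\mathcal{Y}_1[2,4]$ is a triple $(E, \mQ, \mT)$ which, via the standard isomorphism $E \cong \mathrm{Pic}^0(E)$, corresponds to a triple $(E, q, P)$ with $P$ a point of exact order $4$ and $q$ a point of exact order $2$ satisfying $q \neq 2P$. Writing $E = E_\tau := \mathbb{C}/(\mathbb{Z} + \mathbb{Z}\tau)$, one can choose a basis of the lattice so that $P = 1/4$ (possible since any point of exact order $4$ in $E_\tau$ can be written as $\omega_1/4$ for some basis element $\omega_1$) and $q = \tau/2$ (the two admissible $2$-torsion points are $\tau/2$ and $(1+\tau)/2$, and the change of basis $\omega_2 \mapsto \omega_2 + \omega_1$ fixes $P = 1/4$ and interchanges them, so we may always normalize $q = \tau/2$). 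A direct calculation of the action of $\mathrm{SL}_2(\mathbb{Z})$ on the $2$- and $4$-torsion coordinates of $E_\tau$ shows that the stabilizer of the normalized data $(P, q) = (1/4, \tau/2)$ is the congruence subgroup
\[
\Gamma := \Gamma_1(4) \cap \Gamma(2) = \left\{ \begin{pmatrix} a & b \\ c & d \end{pmatrix} \in \mathrm{SL}_2(\mathbb{Z}) \mid a \equiv d \equiv 1 \bmod 4,\ c \equiv 0 \bmod 4,\ b \equiv 0 \bmod 2 \right\},
\]
yielding an isomorphism $\mathcal{Y}_1[2,4] \cong \mathcal{H}/\Gamma$ of coarse moduli spaces.

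All three claimed properties now follow immediately. Irreducibility holds because $\mathcal{H}$ is connected, and hence so is the quotient. Smoothness holds because $\Gamma \subset \Gamma_1(4)$ is torsion-free ($\Gamma_1(N)$ being torsion-free for $N \geq 4$) and $-I \notin \Gamma$, so $\Gamma$ acts freely on $\mathcal{H}$. The dimension equals $\dim \mathcal{H} = 1$. The main step, and the only real calculation involved, is the explicit identification of the stabilizer $\Gamma$: one has to write out the induced action of $\gamma \in \mathrm{SL}_2(\mathbb{Z})$ on the coordinates of the $2$- and $4$-torsion of $E_\tau$ and solve the resulting congruence conditions. I expect this to be the main potential source of errors, but once it is done everything else reduces to standard facts about the action of $\mathrm{SL}_2(\mathbb{Z})$ on the upper half-plane.
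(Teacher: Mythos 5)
Your proof is correct, and it rests on the same underlying fact as the paper's: the transitivity of the $\mathrm{SL}_2(\ZZ)$-action on the admissible level data, which forces the cover of the $j$-line to be connected. The packaging, however, is genuinely different and worth comparing. The paper works with the monodromy action on the characters of the lattice parametrizing $\widehat{E}[2]$ and $\widehat{E}[4]$, asserts transitivity on the set of admissible pairs $(\mQ,\mT)$ as ``a straightforward computation left to the reader'', and then deduces irreducibility from normality plus connectedness of the quotient, and \emph{generic} smoothness from the \'etale cover over a Zariski open subset of $\mathcal{H}/\mathrm{SL}_2(\ZZ)$. You instead run the orbit--stabilizer argument on torsion points: your two normalization steps (putting the order-$4$ point at $\omega_1/4$, then using $\omega_2\mapsto\omega_1+\omega_2$ to move $q$ to $\tau/2$) constitute an actual proof of the transitivity that the paper omits, and the explicit identification of the stabilizer as $\Gamma_1(4)\cap\Gamma(2)$ buys you more: since $\Gamma_1(4)$ is torsion-free and $-I\notin\Gamma$, the action on $\mathcal{H}$ is free and $\mathcal{Y}_1[2,4]$ is smooth everywhere, not merely generically. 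The only caveat is a conventional one: depending on whether one lets $\mathrm{SL}_2(\ZZ)$ act on row or column vectors of torsion coordinates, and on whether one works with points of $E$ or line bundles in $\widehat{E}$ (the two actions differ by inverse-transpose), the stabilizer you get may be a conjugate of $\Gamma_1(4)\cap\Gamma(2)$ rather than that group on the nose; this does not affect any of the three conclusions.
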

\begin{proof}
We proceed as explained in the Appendix A of \cite{PP}.  Let $E=\CC/\Lambda$ be an elliptic curve (and $\widehat{E}$ its dual abelian variety), $E[n]$ the subgroup of order $n$ torsion points on $E$ and $\widehat{E}[n] \subset \widehat{E}$ the subgroup of $n$ torsion line bundles. Moreover let  $G=\mbox{SL}_2(\ZZ)$ be the modular group. Then $G$ is the orbifold fundamental group of $\mathcal{H}/G$ and there is an
induced monodromy action of $G$ on both $E[n]$ and 
$\widehat{E}[n]$, see \cite{Har79}.

By the Appell-Humbert theorem,
the elements of $\widehat{E}[2]$ can be canonically identified with the $4$ characters
$\Lambda \to \mathbb{C}^*$ with values in $\{\pm 1\}$  (see
\cite[Chapter 2]{BL04}) which are
\begin{equation*}
\chi_0:=(1,\,1), \quad \chi_1:=(1, \, -1), \quad
\chi_2:=(-1, \, 1), \quad \chi_3:=(-1, \, -1).
\end{equation*}

Let $\{\omega_1, \omega_2\}$ be a suitable basis of $\Lambda$ by \cite[proof of Proposition 8.1.3]{BL04}, the monodromy action of 
\[
M = \begin{pmatrix} \alpha & \beta \\ \gamma &\delta \end{pmatrix}\in G
\]
 induced over a character $\chi$ is as
follows:

\begin{equation} \label{eq.action.lattice}
\begin{split}
(M \cdot \chi)(\omega_1) & = \chi(\omega_1)^{\alpha}
\chi(\omega_2)^{\beta}  \\
(M \cdot \chi)(\omega_2) & =  \chi(\omega_1)^{\gamma}
\chi(\omega_2)^{\delta}.
\end{split}
\end{equation}
Therefore we have

\begin{equation}\label{eq_actionchi}
M \cdot \chi_1= ((-1)^{\beta}, \, (-1)^{\delta}),\quad
M \cdot \chi_2=((-1)^{\alpha}, \, (-1)^{\gamma}), \quad
M \cdot \chi_3=((-1)^{\alpha+\beta}, \, (-1)^{\gamma+\delta}),
\end{equation}

Whereas the $16$ elements of $\widehat{E}[4]$ correspond to he $16$ characters
$\Lambda \to \mathbb{C}^*$ with values in $\{\pm i\}$:
\begin{equation*}
\begin{array}{cccc}
 \psi_1 :=(1, \, 1),  &  \psi_2:=(1, \, -1), & \psi_3:=( -1,\, 1), & \psi_4 :=(-1, \, -1,)\\
 \psi_5 :=(1, \, i), & \psi_6:=(-1, \, i), &  \psi_7 :=(1, -i), & \psi_8:=(-1, \,  -i), \\
 \psi_9 :=(i, \, 1), & \psi_{10}:=(i, \, -1), & \psi_{11}:=(-i,  \, 1), & \psi_{12} :=(-i,  \, -1). \\
  \psi_{13} :=(i,\, i),& \psi_{14}:=(-i, \, i), & \psi_{15}:=(i, \, -i),  & \psi_{16} :=(-i, \, -i). \\
\end{array}
\end{equation*}

And by equations \eqref{eq.action.lattice} one can compute the induced  action of $M$ over a character $\psi$. 

Thus, to prove the first part of the proposition it is sufficient to check that the monodromy action of
$G$ is transitive on the set
\begin{equation*}
\{(\mQ, \, \mT) \, \in  \big(\widehat{E}[2] \setminus \mathcal{O}_E\big) \times  \big(\widehat{E}[4]\setminus  \widehat{E}[2]\big) | \,  \mT^2 \neq \mQ \}.
\end{equation*}
This is a straightforward computation which can be carried out as
the one in the proof of \cite[Proposition A1]{PP} and it is 
left to the reader.

Therefore we can consider the set of triples
\begin{equation*}
(z, \, \chi, \psi ), \quad z \in \mathcal{H}, \, \, \chi \in \{\chi_1,
\chi_2, \chi_3\} \subset \widehat{E_z}[2], \, \, \psi \in  \{ \psi_5, \ldots, \psi_{16} \} \subset
\widehat{E_z}[4].
\end{equation*}

The group $G$ acts on the set of triple $(z, \chi, \, \psi)$, with the natural action of the modular group on $\mathcal{H}$ and by the induced monodromy action 
 on the second two ones. The corresponding quotient
$\mathcal{Y}_{1}[2,4]$  is a quasi-projective variety.  Moreover 
\[
\pi \colon \mathcal{Y}_{1}[2,4] \longrightarrow \mathcal{H}/G
\] 
given by the forgetful map, is an 
$\acute{\textrm{e}}$tale covers on a smooth Zariski open set
$\mathcal{Y}_{1}^0 \subset \mathcal{H}/G$; then it is
generically smooth. Finally, by construction
$\mathcal{Y}_{1}[2,4]$ is a normal varieties, because it only has quotient singularities.
Then, since it is connected, it must be also irreducible.

\end{proof}

Finally, let $\mathcal{Y}_1[2, \, 2]$ the space parametrizing an elliptic curves with a 2-line bundle $\mQ$ and a second 2-torsion line bundle $\mT$ such that $\mT \neq \mQ$, than we have the following proposition.

  \begin{prop} \label{prop.monodromy.22}
$\mathcal{Y}_{1}[2, \, 2]$ is irreducible and generically
smooth of dimension $1$.
\end{prop}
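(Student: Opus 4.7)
The plan is to mirror the argument of Proposition \ref{prop.monodromy.2} almost verbatim, replacing pairs whose second entry is a primitive $4$-torsion character with pairs of distinct nontrivial $2$-torsion characters. Concretely, I would realise $\mathcal{Y}_1[2,2]$ as the quotient by $G=\mathrm{SL}_2(\ZZ)$ of the set of triples
$(z,\chi,\chi')$ with $z\in\mathcal{H}$ and $\chi,\chi'$ distinct elements of $\{\chi_1,\chi_2,\chi_3\}\subset \widehat{E_z}[2]$, where $G$ acts naturally on $\mathcal{H}$ and via the monodromy formulas \eqref{eq.action.lattice}, equivalently \eqref{eq_actionchi}, on the pair of characters.

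The only real content is to verify that this diagonal action is transitive on the six-element set of ordered pairs of distinct nontrivial $2$-torsion characters. Since the monodromy on $\widehat{E}[2]$ factors through the reduction $\mathrm{SL}_2(\ZZ)\twoheadrightarrow \mathrm{SL}_2(\mathbb{F}_2)\cong S_3$, it suffices to exhibit two transpositions generating $S_3$. Plugging into \eqref{eq_actionchi}, one checks that $T=\bigl(\begin{smallmatrix}1&1\\0&1\end{smallmatrix}\bigr)$ fixes $\chi_2$ and swaps $\chi_1\leftrightarrow\chi_3$, while $\bigl(\begin{smallmatrix}1&0\\1&1\end{smallmatrix}\bigr)$ fixes $\chi_1$ and swaps $\chi_2\leftrightarrow\chi_3$. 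These two generate the full symmetric group on $\{\chi_1,\chi_2,\chi_3\}$, which is $2$-transitive, hence transitive on ordered pairs of distinct elements. This is the only non-formal step, and as seen it is genuinely a two-line verification; I expect no obstacle beyond it.

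Given transitivity, $\mathcal{Y}_1[2,2]$ is connected. As in the proof of Proposition \ref{prop.monodromy.2}, the forgetful morphism $\pi\colon\mathcal{Y}_1[2,2]\to\mathcal{H}/G$ is finite and étale over the complement of the orbifold points (i.e.\ away from $j=0,1728$), so $\mathcal{Y}_1[2,2]$ is generically smooth of dimension $1$. Finally, since it is the quotient of a smooth variety by a properly discontinuous action with finite stabilisers, $\mathcal{Y}_1[2,2]$ has at worst quotient singularities and is therefore normal; a normal connected variety is irreducible, which completes the proof.
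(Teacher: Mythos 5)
Your proposal is correct and follows essentially the same route as the paper, which simply declares the proof ``analogous'' to that of Proposition \ref{prop.monodromy.2} and reduces everything to the transitivity of the monodromy action on ordered pairs of distinct nontrivial $2$-torsion characters. Your explicit verification via the two transpositions induced by $\bigl(\begin{smallmatrix}1&1\\0&1\end{smallmatrix}\bigr)$ and $\bigl(\begin{smallmatrix}1&0\\1&1\end{smallmatrix}\bigr)$, generating $S_3$ acting $2$-transitively on $\{\chi_1,\chi_2,\chi_3\}$, is exactly the computation the paper leaves to the reader, and it checks out against \eqref{eq_actionchi}.
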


\begin{proof} The proof is analogous to the one for Proposition \ref{prop.monodromy.2}. One has to be careful, again, in checking that the monodromy action of G is transitive on the set

\begin{equation*}
\{(\mQ, \, \mT) \, \in  \big(\widehat{E}[2] \setminus \mathcal{O}_E\big) \times  \big(\widehat{E}[2]\setminus  \mathcal{O}_E \big) | \,  \mT \neq \mQ \}.
\end{equation*}
But, again, this follows from the actions \eqref{eq_actionchi}, from which one sees right away that the image of $\mT$ is always different form the image of $\mQ$. 
\end{proof}

 \begin{prop}\label{prop_irred}
The subvariety  $\mathcal{N}_j \subset  \mathcal{M}_{1,3} \times \mathcal{M}_{1,3}$ is irreducible, generically smooth of
dimension $2$ for each $j=1,2$ and $4$. 
\end{prop}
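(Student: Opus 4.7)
The plan is to show that $\mathcal N_j$ splits as a product of two irreducible modular curves, one for each factor of $\mathcal M_{1,3}\times\mathcal M_{1,3}$. The key observation is that condition (1) together with condition (3) in Definition \ref{defin_Nj} only involves the first datum $(T_1,a_3,a_4,\bar a)$, while condition (2) (and the part of (1) about $b_2-b_1$) only involves the second datum $(T_2,b_1,b_2,\bar b)$. Hence $\mathcal N_j=\mathcal F^{(1)}_j\times\mathcal F^{(2)}$, and it is enough to prove that each factor is an irreducible, generically smooth curve.

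For the second factor, I will identify $\mathcal F^{(2)}$, the locus of $(T_2,b_1,b_2,\bar b)$ such that $\oo_{T_2}(b_2-b_1)$ has order $2$, $\oo_{T_2}(\bar b-b_1)$ has order $4$ and $\oo_{T_2}(\bar b-b_1)^{\otimes 2}\not\cong \oo_{T_2}(b_2-b_1)$, with $\mathcal{Y}_1[2,4]$ via the map $(T_2,b_1,b_2,\bar b)\mapsto (T_2,b_1;\,\oo_{T_2}(b_2-b_1),\oo_{T_2}(\bar b-b_1))$, where $b_1$ is taken as the origin of the group law on $T_2$. Proposition \ref{prop.monodromy.2} then gives irreducibility, generic smoothness and dimension $1$ of $\mathcal F^{(2)}$.

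For the first factor I will distinguish the three cases for $j$. In case $j=1$, condition (3) forces $\bar a=a_3$, so $\mathcal F^{(1)}_1$ parametrizes pairs $(T_1,a_3;\,a_4)$ with $\oo_{T_1}(a_4-a_3)$ of order exactly $2$; this is the classical modular curve $\mathcal Y_1[2]$, well-known to be irreducible, smooth, of dimension $1$. In case $j=2$, $\mathcal F^{(1)}_2$ is the locus of $(T_1,a_3,a_4,\bar a)$ with two distinct non-trivial $2$-torsion bundles $\oo_{T_1}(a_4-a_3)$ and $\oo_{T_1}(\bar a-a_3)$, which is precisely $\mathcal Y_1[2,2]$; Proposition \ref{prop.monodromy.22} gives the claim. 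In case $j=4$, condition (3) requires $\oo_{T_1}(\bar a-a_3)$ to be of order $4$ with square $\oo_{T_1}(a_4-a_3)$, and this determines $a_4$ from $\bar a$; hence $\mathcal F^{(1)}_4$ is the classical $\mathcal Y_1[4]$ (elliptic curves with a marked point of exact order $4$), again irreducible, smooth, of dimension $1$.

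In each case, $\mathcal N_j$ is the product of two irreducible, generically smooth curves, hence irreducible, generically smooth of dimension $2$. The only subtlety I anticipate is matching definitions carefully: one has to fix $a_3$ and $b_1$ as the origins of the respective group laws on $T_1$ and $T_2$ so that conditions ``$\oo(\cdot-\cdot)$ of order $N$'' translate to ``$\cdot$ is a point of exact order $N$'', and check that the restriction $\oo_{T_2}(\bar b-b_1)^{\otimes 2}\not\cong \oo_{T_2}(b_2-b_1)$ in case $j\in\{1,2\}$ (imposed on $\mathcal F^{(2)}$) is automatic from the definition of $\mathcal{Y}_1[2,4]$ used in Proposition \ref{prop.monodromy.2}; these are bookkeeping matters rather than real obstacles.
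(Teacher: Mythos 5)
Your proposal is correct and follows essentially the same route as the paper: decompose the conditions defining $\mathcal{N}_j$ into those involving only $(T_1,a_3,a_4,\bar a)$ and those involving only $(T_2,b_1,b_2,\bar b)$, identify the resulting factors with $\mathcal{Y}_1[2]$, $\mathcal{Y}_1[2,2]$ or $\mathcal{Y}_1[4]$ and with $\mathcal{Y}_1[2,4]$ respectively, and invoke Propositions \ref{prop.monodromy.2} and \ref{prop.monodromy.22} together with the classical irreducibility of $\mathcal{Y}_1[N]$. The only cosmetic difference is that the paper phrases the identification as a dominant morphism from the product of modular curves onto $\mathcal{N}_j$ and then quotes $\dim\mathcal{N}_j=2$ from Propositions \ref{prop_moduli_dim} and \ref{prop_NM}, whereas you obtain the dimension directly from the product decomposition.
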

\begin{proof} First of all, we mean by an {\it elliptic curve marked with a point} that we have fixed a group structure on the curve of genus $1$ for which that point is the neutral element. We always choose for $T_1$ the point $a_3$ and for $T_2$ the point $b_1$ as neutral elements. 

After this global consideration, we prove the claim case by case as $j$ varies. 
\medskip

{\bf Case  j=4:}

By Definition \ref{defin_Nj} the variety $\mathcal{N}_4$ depends
on the following data:
\begin{itemize}
\item one elliptic curve $T_1$ {\it marked} with a point $a_3$ and a $4-$torsion line bundle $\mathcal{T}_1=\oo_{T_1}(\bar{a}-a_3)$ which is not $2-$torsion -- its square determines the last point on $T_1$ ($a_4$);
\item one elliptic curve $T_2$ {\it marked} with a point $b_1$, a $4-$torsion line bundle $\mathcal{T}_2:=\oo_{T_2}(\bar{b}-b_1)$ and  a $2-$torsion 
line bundle $\oo_{T_2}(b_2-b_1) \not\cong \mathcal{T}^2_2$.
\end{itemize}
In other words there is a dominant morphism 
\[
\mathcal{Y}_1[4] \times \mathcal{Y}_1[2, \, 4] \rightarrow \mathcal{N}_4.
\] 
We observe that
$\mathcal{Y}_1[4]$ is a generically smooth quasi-projective variety,  connected,
 and irreducible of dimension  $1$, \cite[Chapter 2]{DS05}. By Proposition \ref{prop.monodromy.2}  $\mathcal{Y}_1[2, \, 4]$ is irreducible and generically smooth of dimension $1$. 
 This concludes the proof since $\dim \mathcal{N}_4=2$ by Proposition \ref{prop_moduli_dim} and Proposition \ref{prop_NM}.
 
\medskip
 
{\bf Case  j=2:}

By Definition \ref{defin_Nj} the variety $\mathcal{N}_2$ depends
on the following data:
\begin{itemize}
\item one elliptic curve $T_1$ {\it marked} with a point $a_3$ and two $2-$torsion line bundles $\mathcal{T}_1=\oo_{T_1}(a_4-a_3)$ and $\mathcal{Q}=\oo_{T_1}(\bar{a}-a_3)$ such that $\mathcal{T}_1 \not\cong \mathcal{Q}$;
\item one elliptic curve $T_2$ {\it marked} with a point $b_1$, a $4-$torsion line bundle $\mathcal{T}_2:=\oo_{T_2}(\bar{b}-b_1)$ and  a $2-$torsion 
line bundle $\oo_{T_2}(b_1-b_2) \not\cong \mathcal{T}^2_2$.
\end{itemize}
In other words there is a dominant morphism 
\[
\mathcal{Y}_1[2, \, 2] \times \mathcal{Y}_1[2, \, 4] \rightarrow \mathcal{N}_2.
\] 
By Proposition \ref{prop.monodromy.22}, we have that $\mathcal{Y}_1[2, \, 2]$ is irreducible and generically smooth of dimension $1$. 
 This concludes the proof since $\dim \mathcal{N}_2=2$ by Proposition \ref{prop_moduli_dim} and Proposition \ref{prop_NM}.

\medskip

{\bf Case  j=1:}

Finally, we have by Definition \ref{defin_Nj} that the variety $\mathcal{N}_1$ depends
on the following data:
\begin{itemize}
\item one elliptic curve $T_1$ {\it marked} with a point $a_3$ and one $2-$torsion line bundles $\mathcal{T}_1=\oo_{T_1}(a_4-a_3)$;
\item one elliptic curve $T_2$ {\it marked} with a point $b_1$, a $4-$torsion line bundle $\mathcal{T}_2:=\oo_{T_2}(\bar{b}-b_1)$ and  a $2-$torsion 
line bundle $\oo_{T_2}(b_1-b_2) \not\cong \mathcal{T}^2_2$.
\end{itemize}
In other words there is a dominant morphism 
\[
\mathcal{Y}_1[2] \times \mathcal{Y}_1[2, \, 4] \rightarrow \mathcal{N}_1.
\] 

We observe that
$\mathcal{Y}_1[2]$ is a generically smooth quasi-projective variety,  connected,
 and irreducible of dimension  $1$, \cite[Chapter 2]{DS05}. Ad we conclude as the previous cases. 
 
\end{proof}

\begin{cor}\label{prop_irred}
The components $\mathcal{M}_1$, $\mathcal{M}_2$ and  $\mathcal{M}_4$ of  $\mathcal{M}$ are irreducible of dimension $2$.
\end{cor}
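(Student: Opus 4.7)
The proof is essentially immediate from the results already established, so the plan is simply to assemble them. My approach would be to combine Proposition \ref{prop_NM} (which gives a proper, finite, surjective morphism $\mathfrak{m}_j \colon \mathcal{N}_j \to \mathcal{M}_j$) with the irreducibility/dimension statement for $\mathcal{N}_j$ proved just above (the previous Proposition, also labelled \ref{prop_irred}), and with Proposition \ref{prop_moduli_dim} on the dimension of every component of $\mathcal{M}$.

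First, I would fix $j \in \{1,2,4\}$. By the previous Proposition \ref{prop_irred}, $\mathcal{N}_j$ is irreducible of dimension $2$. By Proposition \ref{prop_NM}, the morphism $\mathfrak{m}_j \colon \mathcal{N}_j \to \mathcal{M}_j$ is surjective (and moreover proper and finite). Since the continuous surjective image of an irreducible topological space is irreducible, $\mathcal{M}_j = \mathfrak{m}_j(\mathcal{N}_j)$ is irreducible.

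For the dimension, I would invoke either the finiteness of $\mathfrak{m}_j$ (which forces $\dim \mathcal{M}_j = \dim \mathcal{N}_j = 2$) or, more directly, Proposition \ref{prop_moduli_dim}, which guarantees that every connected component of $\mathcal{M}$, and hence in particular the irreducible components $\mathcal{M}_1, \mathcal{M}_2, \mathcal{M}_4$ (which are the connected components of $\mathcal{M}$ by the definition just given), has dimension $2$.

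I do not anticipate any obstacle: the substantive work has already been done in Proposition \ref{prop_NM} (constructing $\mathfrak{m}_j$ and controlling its degree) and in the preceding Proposition \ref{prop_irred} (irreducibility of $\mathcal{N}_j$ via the modular-curve analysis of $\mathcal{Y}_1[2]$, $\mathcal{Y}_1[4]$, $\mathcal{Y}_1[2,2]$ and $\mathcal{Y}_1[2,4]$). The corollary is a one-line bookkeeping statement packaging these.
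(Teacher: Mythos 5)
Your proposal is correct and follows essentially the same route as the paper: the paper's proof of this corollary is exactly the two-line assembly of Proposition \ref{prop_NM} (the proper finite surjective morphism $\mathfrak{m}_j \colon \mathcal{N}_j \to \mathcal{M}_j$) with the irreducibility and dimension of $\mathcal{N}_j$ from the preceding proposition. Your added remarks (continuous surjective image of an irreducible space is irreducible; finite morphisms preserve dimension) merely make explicit what the paper leaves implicit.
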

\begin{proof}
By Proposition \ref{prop_NM} we have that $\mathfrak{m}_j \colon \mathcal{N}_j \rightarrow \mathcal{M}_j$ is a proper finte surjective morphism for each $j=1,2$ and $4$. Moreover, by Proposition \ref{prop_irred} we have that each $\mathcal{N}_j$ is irreducible of dimension $2$ for each $j=1,2$ and $4$.

\end{proof}

\section{Some remarks on the deformations  of a blown up surface}\label{sec_deform}
In this section we shall present some classicall results on deformation of a pairs. The main result is  Theorem \ref{thm_def}, possibly known to the experts, although we could not find it in the literature. This section will be employed systematically in the Moduli Space Section \ref{moduli} and Theorem \ref{thm_def} mainly for the Remark \ref{rem_InvEAntiInv}.

Let us first recall some basic definition.

Let $B$ an algebraic nonsingular variety over an algebraically closed field $k$. The \emph{first order deformation} of $B$ is a  commutative diagram
\[
\begin{xy}
\xymatrix{
B \ar@{->}[r] \ar@{->}[d] & \mathcal{B}  \ar@{->}[d]^{\pi} \\
Spec(k) \ar@{->}[r] & Spec(k[\epsilon]) 
 }
\end{xy}
\]
where $\pi$ is a flat morphism,  $Spec(k[\epsilon]) =Spec(k[t]/t^2)$ and such that the induced morphism 
\[
B \to Spec(k) \times_{Spec(k[\epsilon])}\mathcal{B}
\]
is an isomorphism. There is a natural notion of isomorphism between first order deformations, see \cite[Section 1.2]{Se06}. The set of first order deformations, up to isomorphisms, is usually denoted by $T^1(B)$ and it has a natural structure of complex vector space (see \cite{S68}). If $B$ has a semiuniversal deformation $\tilde{B} \to Def(B)$ then every first order deformation is induced by a unique map $Spec(k[\epsilon]) \to Def(B)$ and then there exists an isomorphisms of vector spaces
\[
T_0Def_{B} \cong T^1(B) \cong H^1(B, T_B),
\]
for the last isomorphism see e.g. \cite[Proposition 1.2.9]{Se06}.

Now, we look at deformations of subvarieties in a given variety. 
Given a closed embedding $D \subset B$, the \emph{first order deformation} of $D$ in $B$ is a cartesian diagram 
\[
\begin{xy}
\xymatrix{
D \ar@{^{(}->}[r] \ar@{->}[d] & \mathcal{D}  \ar@{->}[d]^{\pi}  \ar@{^{(}->}[r] & B \times Spec(k[\epsilon]) \ar@{->}[d] \\
Spec(k) \ar@{^{(}->}[r] & Spec(k[\epsilon]) \ar@{=}[r]&Spec(k[\epsilon]) 
 }
\end{xy}
\]
where $\pi$ is flat and it is induced by the projection from $B \times Spec(k[\epsilon])$. Again we can give a cohomological interpretation to these deformations, indeed there is a natural identification between the first order deformations of $D$ in $B$ and $H^0(D, \mathcal{N}_{D/B})$, where $\mathcal{N}_{D/B}$ is the normal sheaf of $D$ in $B$, see e.g. \cite[Proposition 3.2.1]{Se06}. 

Before introducing the last two situations we are interested in, let us recall the following definition. 
\begin{defin} Let $D_1, \ldots ,D_k$ be divisors in a smooth manifold X and $x_1, \ldots,x_k$ equations for them.
Define $\Omega^1_S(\log D_1, \ldots ,\log D_k)$ to be the subsheaf (as $\mathcal{O}_X$-module) of $\Omega^1_X(D_1+ \ldots +D_k)$ generated by $\Omega^1_X$ and by $\frac{dxj}{x_j}$ for $j=1, \ldots k$.
\end{defin}

The next situation we want to look at is the case of deformation of a pair $(B,D)$ where $j\colon D \hookrightarrow B$ is a closed embedding.  The deformation theory of morphisms is more subtle if we want to allow both the domain and  the target to deform nontrivially. A first order deformation of the pair $(D,B)$ is a commutative diagram
\[
\begin{xy}
\xymatrix{
\mathcal{D} \ar@{->}[rr]^{J} \ar@{->}[dr]_{\pi_D} & & \mathcal{B}  \ar@{->}[dl]^{\pi_B} \\
 & Spec(k[\epsilon]) 
 }
\end{xy}
\]
where $\pi_D$ and $\pi_B$ come from  first deformations of $D$ and $B$ respectively and $J$ is a closed embedding. There is a natural notion of isomorphism between first order deformations of pairs see e.g. \cite[Section 3.4]{Se06}. And, we denote by $Def'_j$ the set if isomorphism classes of first order deformations of the pair $(B,D)$, which are locally trivial. Also in this case we have a cohomological interpretation, by \cite[Proposition 3.4.17]{Se06},  $Def'_j$ has a formal semiuniversal deformation and its tangent space is isomorphic to $ H^1( T_{B'}( - \log D'))$, where $T_{B'}( - \log D')$ is the sheaf of germs of tangent vectors to $B'$ which are tangent to $D'$.

Finally, let us consider the following situation. Let $B$ be a compact complex smooth surface, $p \in B$ and $\sigma\colon B' \to B$ the blow up of $B$ in $p$ with exceptional divisor $E$. Let $D$ be an effective divisor on $B$ which has multiplicity $c$ in $p$. Moreover, let us denote by $D'=\sigma^*(D)-cE$ the strict transform of $D$ in $B'$ and assume that $D'$ is a smooth normal crossing divisor. We want to describe the relations between the deformations of the pair  $(B', \, D')$ with those of $D$ in $B$.

We know that the first order deformations of the pair $(B', \, D')$ are parameterized by the vector space $H^1(T_{B'}( - \log D'))$.
The natural map
\[
\vartheta \colon H^1( T_{B'}( - \log D'))  \to H^1(T_{B'}) 
\]
corresponds to the forgetful map, which forget the deformation of $D'$. 
By \cite[Exercise 10.5]{H} we have an exact sequence
\begin{equation*}
0 \to \sigma_*T_{B'} \to T_B \to T_pB \to 0
\end{equation*}
where $T_pB \cong \CC^2$ is the tangent space of $B$ in $p$ seen as skyscreaper sheaf concentrated in $p$. Then we consider the long exact sequence in cohomology and in particular the connecting homomorphism
\[
\psi\colon T_pB \to H^1(\sigma_* T_{B'}) \cong H^1(T_{B'}).
\]
The next result give us a better understanding of the intersection between the images of the maps $\vartheta$ and $\psi$ in $H^1(T_{B'})$.

\begin{thm}\label{thm_def} Keeping the same notation as before, assume that $D$ is smooth at $p$, so $c=1$, and choose an element $v \in T_pB$.

Then $\psi(v) $ is contained  in $Im(\vartheta)$ if and only if the class of $v$ in the normal vector space $T_pB/T_pD$ extends to a global section of the normal bundle $v_D \in H^0(D,{\mathcal N}_{D|B})$. 

In particular  $v$ is  tangent to $D$ if and only if $v_D$ vanishes in $p$.    
\end{thm}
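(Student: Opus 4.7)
My plan is to deduce the theorem from a commutative diagram of short exact sequences of sheaves on $B$, together with naturality of the associated long exact cohomology sequences.

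\textbf{Step 1.} Working in local coordinates $(x,y)$ near $p$ with $D=\{y=0\}$, a direct computation on the two standard charts of $\sigma$ identifies $\sigma_* T_{B'}$ with the subsheaf ${\mathfrak m}_p\cdot T_B$ of $T_B$ (this is precisely the sequence given in the excerpt) and $\sigma_* T_{B'}(-\log D')$ with the subsheaf of $a\partial_x+b\partial_y$ with $a\in {\mathfrak m}_p$ and $b\in (y)$. In particular
\[
\sigma_* T_{B'}(-\log D') \;=\; \sigma_* T_{B'}\cap T_B(-\log D),
\]
and the local generator $x\,\partial_y$ of the cokernel of the inclusion $\sigma_* T_{B'}(-\log D') \hookrightarrow \sigma_* T_{B'}$ identifies this cokernel with the twisted normal bundle ${\mathcal N}_{D|B}(-p):={\mathcal N}_{D|B}\otimes \OO_D(-p)$. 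Combining this with the normal bundle sequence $0\to T_B(-\log D)\to T_B\to {\mathcal N}_{D|B}\to 0$ produces a $3\times 3$ diagram of short exact sequences with rows
\[
\begin{aligned}
0\to \sigma_* T_{B'}(-\log D')\to \sigma_* T_{B'}\to {\mathcal N}_{D|B}(-p)&\to 0, \\
0\to T_B(-\log D)\to T_B\to {\mathcal N}_{D|B}&\to 0, \\
0\to T_pD\to T_pB\to T_pB/T_pD&\to 0,
\end{aligned}
\]
whose middle column recovers the given sequence $0\to\sigma_* T_{B'}\to T_B\to T_pB\to 0$.

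\textbf{Step 2.} Base change on the exceptional curve $E\cong\PP^1$ shows that $R^1\sigma_* T_{B'}$ and $R^1\sigma_* T_{B'}(-\log D')$ both vanish, so Leray identifies $H^1(T_{B'})\cong H^1(\sigma_* T_{B'})$, $H^1(T_{B'}(-\log D'))\cong H^1(\sigma_* T_{B'}(-\log D'))$, and $\vartheta$ with the map induced by the top row of the diagram. Its long exact sequence supplies a connecting homomorphism
\[
\delta\colon H^1(\sigma_* T_{B'})\longrightarrow H^1({\mathcal N}_{D|B}(-p)),\qquad \ker\delta={\rm Im}(\vartheta).
\]
Naturality of the connecting homomorphism, applied to the morphism of short exact sequences given by the middle and right columns of the $3\times 3$ diagram, yields the commutative square
\[
\begin{CD}
T_pB @>{\psi}>> H^1(\sigma_* T_{B'}) \\
@VV{\pi}V @VV{\delta}V \\
T_pB/T_pD @>{\psi'}>> H^1({\mathcal N}_{D|B}(-p))
\end{CD}
\]
where $\pi$ is the canonical projection and $\psi'$ is the connecting homomorphism associated to the short exact sequence $0\to {\mathcal N}_{D|B}(-p)\to {\mathcal N}_{D|B}\to T_pB/T_pD\to 0$.

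\textbf{Step 3.} The conclusion is now immediate: $\psi(v)\in{\rm Im}(\vartheta)$ iff $\delta(\psi(v))=\psi'(\bar v)=0$, where $\bar v:=\pi(v)$. The long exact sequence of the right column identifies $\ker\psi'$ with the image of the evaluation map $H^0({\mathcal N}_{D|B})\to ({\mathcal N}_{D|B})_p=T_pB/T_pD$, which is precisely the set of classes $\bar v$ extending to a global section $v_D$ of the normal bundle. For the ``in particular'', note that $v_D(p)=\bar v$ by construction, so $v\in T_pD$ iff $\bar v=0$ iff $v_D$ vanishes at $p$. The main technical point in this plan is the local identification $\sigma_* T_{B'}/\sigma_* T_{B'}(-\log D')\cong {\mathcal N}_{D|B}(-p)$: the twist by $\OO_D(-p)$, appearing because a lift has to vanish at $p$, is precisely what produces a nontrivial $\delta$ and makes the whole criterion meaningful.
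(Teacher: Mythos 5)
Your proof is correct, and it takes a genuinely different route from ours. We argue by hand: we realize $\psi\bigl(a\frac{\partial}{\partial x}+b\frac{\partial}{\partial y}\bigr)$ as the Kodaira--Spencer class of the explicit first-order deformation $\mathcal{B}'_{a,b}$ obtained by blowing up the moving section $(a\epsilon,b\epsilon)$ of the trivial family, and then decide in local coordinates when $D'$ extends to a divisor in $\mathcal{B}'_{a,b}$; the criterion $\delta(0,0)=a$ drops out of a divisibility computation on the blown-up chart. You replace all of this by a $3\times 3$ diagram of sheaves downstairs on $B$, built on the identity $\sigma_* T_{B'}(-\log D')=\sigma_* T_{B'}\cap T_B(-\log D)$ and the computation of its cokernel as $\mathcal{N}_{D|B}(-p)$, after which the theorem is a formal consequence of naturality of the long exact sequences; this is more conceptual and makes transparent where the condition comes from (the twist by $\mathcal{O}_D(-p)$ is exactly the obstruction). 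Two points in your write-up need attention, though neither is fatal. First, the vanishing $R^1\sigma_* T_{B'}(-\log D')=0$ is asserted but not proved; it does hold, since restricting to $E$ gives $\mathcal{O}_{\mathbb{P}^1}(1)\oplus\mathcal{O}_{\mathbb{P}^1}(-1)$ and the infinitesimal neighbourhoods of $E$ only twist this by nonnegative powers of $\mathcal{O}_E(-E)\cong\mathcal{O}_{\mathbb{P}^1}(1)$, but it is genuinely needed: without it $\mathrm{Im}(\vartheta)$ could be strictly larger than the image of $H^1(\sigma_* T_{B'}(-\log D'))$ and your identification $\mathrm{Im}(\vartheta)=\ker\delta$ would fail. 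Second, your identification of the cokernel of $\sigma_* T_{B'}(-\log D')\hookrightarrow\sigma_* T_{B'}$ with $\mathcal{N}_{D|B}(-p)$ uses that $D$ is smooth everywhere, not only at $p$: at a node of $D$ away from $p$ the quotient $T_B/T_B(-\log D)$ is the direct sum of the normal bundles of the two branches rather than $\mathcal{O}_D(D)$, so in the normal-crossing generality of the setup your criterion really involves $H^0(T_B/T_B(-\log D))$ instead of $H^0(D,\mathcal{N}_{D|B})$; for the smooth divisors to which we apply the theorem the two coincide. (A small terminological slip: your $\delta$ is the map induced on $H^1$ by the projection onto the cokernel, not a connecting homomorphism; the exactness $\ker\delta=\mathrm{Im}(\vartheta)$ you invoke is of course still correct.)
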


\begin{proof} 
We start constructing a family of first order deformations of $B'$.

Let $U$ be an affine chart of $B$ centered in $p$ with local coordinates $x,y$ such that $D=\{x=0\}$. We consider a section $s_{a,b}$ of the trivial family $B \times Spec(\CC[\epsilon]) \rightarrow  Spec(\CC[\epsilon])$ whose image is contained in $U \times Spec(\CC[\epsilon])$
  \[
\begin{xy}
\xymatrix{
B \times Spec(\CC[\epsilon]) \ar@{}[r]|-*{\supset}& U \times Spec(\CC[\epsilon])  \ar@{->}[r]  
 & \ar@/{}^{2pc}/[l]^{s_{a,b}}   Spec(\CC[\epsilon]) 
 }
\end{xy}
\]
obtained by mapping $(x,y,\epsilon)$ to $(a\epsilon, b\epsilon, \epsilon)$, so that the image is locally the complete intersection
\[
x-a\epsilon=y-b\epsilon=0.
\]
Blowing up this section we obtain the following families over $Spec(\CC[\epsilon])$
\[
\begin{xy}
\xymatrix{
\mathcal{B}'_{a,b}  \ar@{->}[r]\ar@/{}^{2pc}/[rr]^{\Phi_{a,b}}  & B \times Spec(\CC[\epsilon])  \ar@{->}[r]  
 &    Spec(\CC[\epsilon]) \\
U'_{a,b}  \ar@{^{(}->}[u] \ar@{->}[r] & U \times Spec(\CC[\epsilon])   \ar@{^{(}->}[u]  \ar@{->}[r]  
 &   \ar@{=}[u]   Spec(\CC[\epsilon]) 
 }
\end{xy}
\]
where $\Phi_{a,b}$ is a first-order deformation of $B'$. The Kodaira-Spencer correspondence associates to $\Phi_{a,b}$ a class in $ \kappa\left(\Phi_{a,b} \right) \in H^1(B',T_{B'})$, its Kodaira-Spencer class. 
This can be explicitly computed: following {\it e.g.} the proof of \cite[Proposition 1.2.9]{Se06} we find
\[
\kappa \left( \Phi_{a,b} \right)=\psi \left( a\frac{\partial}{\partial x} + b\frac{\partial}{\partial y}\right).
\]

The blown up chart $U'_{a,b}$ is the subscheme of $U \times \PP^1 \times  B \times Spec(\CC[\epsilon])$ defined by
\[
Y(x-a\epsilon)=X(y-b\epsilon),
\]
where $(X,Y)$ are homogeneous coordinates on the factor $\PP^1$. It is the union of two affine charts, given respectively by imposing $X \neq 0$ and $Y\neq 0$.

Let us work locally and restrict to the affine chart of $U'_{a,b}$ given by $Y\neq 0$, and let us introduce the new coordinate $z=\frac{X}{Y}$. 
Then, we can eliminate $x$ by 
\[
x=zy+(a-bz)\epsilon
\]
and the exceptional divisor $\mathcal{E}$ of the blow-up is $\{y-b\epsilon=0\}$ in the coordinates $y,z$.

Since $D=\{x=0\}$, the strict transform of $D$ on $B'$ is, in the coordinates $y,z$, the divisor $D'=\{z=0\}$. Now, $\kappa \left( \Phi_{a,b}\right)$ is in the image of $\vartheta$ if and only if
$D'$ can be extended to a divisor $\mathcal{D}'_{a,b}$ in $\mathcal{B}'_{a,b}$. The  image of $\mathcal{D}'_{a,b}$ in  $B \times Spec(\CC[\epsilon])$ is 
\[
\mathcal{D}_{a,b}=\{x+\delta(x,y) \epsilon =0\},
\]
an infinitesimal deformation of $D$ in $B$ over $Spec \left( {\mathbb C}[\epsilon]\right)$ so that $\delta(x,y)$ is the affine trace of a global section of the normal bundle ${\mathcal N}_{D|B}$, an element $\delta \in H^0(D,{\mathcal N}_{D|B})$  (\cite[Proposition 3.2.1]{Se06}), locally given by the class of a vector field $
\delta(x,y)\frac{\partial}{\partial x}$.

The pullback of $\mathcal{D}_{a,b}$ on $\mathcal{B}'_{a,b}$ contains the exceptional divisor $\mathcal{E}$, thus
\[
y-b \epsilon  \text{ divides } zy+(a-bz-\delta(zy,y) )\epsilon =z(y-b\epsilon) + (a-\delta(zy,y) )\epsilon,
\]
that implies $\delta(0,0)=a$. Conversely, if $\delta(0,0)=a$ the pull-back of $\mathcal{D}_{a,b}$ contains $\mathcal{E}$ and then its strict transform gives an extension ${\mathcal D}'_{a,b}$ of $D'$ in $\mathcal{B}'_{a,b}$. 

Since the class of $v= a\frac{\partial}{\partial x} + b\frac{\partial}{\partial y}$ in $T_pB/T_pD$ equals the class of $ a\frac{\partial}{\partial x}$, then $\psi \left(v \right)$ is in the image of $\theta$ if and only if there is some $\delta \in H^0(B,{\mathcal N}_{D|B})$ whose value at $p$ is the class of $v$.
\end{proof}

The situation is even simpler if $D$ is a rigid divisor.

\begin{cor}\label{cor_def}
Let $D$ be a divisor which is smooth in $p$ and $H^0(D,\oo_D(D))=0$. Let  $v \in T_pB$ such that  $\psi(v) \in Im (\vartheta)$. Then $v$ is tangent to $D$. 
\end{cor}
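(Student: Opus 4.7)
The plan is to deduce this as an immediate consequence of Theorem \ref{thm_def}. Since by hypothesis $\psi(v)$ lies in the image of $\vartheta$, the theorem tells us that the class $[v] \in T_pB/T_pD$ is the value at $p$ of some global section $v_D \in H^0(D, \mathcal{N}_{D|B})$ of the normal bundle.

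Next, I would use the standard identification $\mathcal{N}_{D|B} \cong \oo_D(D)$ for a divisor $D$ smooth at $p$ (here we only need this locally near $p$; alternatively, since $D$ is assumed smooth at $p$ and we are only integrating the obstruction near $p$, we can work with the local normal space and extend). The rigidity assumption $H^0(D, \oo_D(D)) = 0$ then forces $v_D = 0$ identically, and in particular its value at $p$ vanishes.

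Therefore the class of $v$ in $T_pB/T_pD$ is zero, which is exactly the statement that $v \in T_pD$, i.e., $v$ is tangent to $D$. There is really no obstacle here: the corollary is a one-line specialization of Theorem \ref{thm_def}, obtained by observing that the rigidity of $D$ kills the space where the obstruction $v_D$ is allowed to live.
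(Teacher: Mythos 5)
Your proof is correct and is exactly the argument the paper intends (the paper in fact omits the proof entirely, presenting the corollary as an immediate specialization of Theorem \ref{thm_def}): the rigidity hypothesis $H^0(D,\oo_D(D))=H^0(D,\mathcal{N}_{D|B})=0$ forces $v_D=0$, hence the class of $v$ in $T_pB/T_pD$ vanishes and $v$ is tangent to $D$.
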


Keeping the same notation as above, the application we have in mind for the next can be summarized in 
\begin{prop}\label{prop_deformation}  Let $B' \to B$ the blow up of $B$ in $p$ and $D'$ the strict transform of $D$ a divisor passing through $p$.  Let us further suppose that $D \geq D_1 + D_2$ with $D_1$ and $D_2$  smooth and transversal in $p$. Moreover, let us assume that $H^0(D_i,\oo_{D_i}(D_i))=0$ for $i=1,2$. Then 
\begin{equation}
\vartheta( H^1(B', \, T_{B'}( - \log D'))) \cap \psi(T_pB) = \{ 0 \} .
\end{equation}
\end{prop}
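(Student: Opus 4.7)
The plan is to take any $v \in T_pB$ with $\psi(v) \in \vartheta(H^1(B', T_{B'}(-\log D')))$, deduce from Corollary \ref{cor_def} that $v$ is tangent to both $D_1$ and $D_2$ at $p$, and conclude via transversality that $v=0$, hence $\psi(v)=0$.

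First I would use the hypothesis $D\geq D_1+D_2$ to infer the analogous inequality $D'\geq D'_1+D'_2$ between strict transforms, so that both $D'_1$ and $D'_2$ appear as irreducible components of $D'$, smooth in a neighbourhood of $E$ because each $D_i$ is smooth at $p$. Reading $T_{B'}(-\log D')$ as the subsheaf of $T_{B'}$ of vector fields tangent to every branch of $D'$ (as recalled via the dual description of $\Omega^1_{B'}(\log D')$), this produces natural inclusions of $\OO_{B'}$-modules
\[
T_{B'}(-\log D')\ \hookrightarrow\ T_{B'}(-\log D'_i)\ \hookrightarrow\ T_{B'}, \qquad i=1,2,
\]
whose composition is the forgetful inclusion. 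Taking $H^1$ yields factorizations $\vartheta=\vartheta_i\circ r_i$, where $\vartheta_i\colon H^1(T_{B'}(-\log D'_i))\to H^1(T_{B'})$ is the forgetful map associated to the pair $(B',D'_i)$. In particular $\operatorname{Im}(\vartheta)\subseteq \operatorname{Im}(\vartheta_i)$ for $i=1,2$, so $\psi(v)\in\operatorname{Im}(\vartheta_i)$ for both values of $i$.

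Next I would apply Corollary \ref{cor_def} twice. Since $D_i$ is smooth at $p$ and $H^0(D_i,\OO_{D_i}(D_i))=0$ by hypothesis, the corollary converts $\psi(v)\in\operatorname{Im}(\vartheta_i)$ into $v\in T_pD_i$. Combining the two cases gives $v\in T_pD_1\cap T_pD_2$, and the transversality of $D_1$ and $D_2$ at $p$ forces $v=0$, hence $\psi(v)=0$.

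The only real subtlety I anticipate is the sheaf-theoretic bookkeeping of the first step: checking cleanly that the inclusion $T_{B'}(-\log D')\hookrightarrow T_{B'}(-\log D'_i)$ is well-defined and that the resulting triangle of forgetful maps commutes at the $H^1$ level. This is a standard functoriality property of the logarithmic tangent sheaf with respect to sub-divisors of a normal-crossing divisor, so it should follow directly from the definition recalled earlier in the section. Once this factorization is in hand, the argument reduces to two applications of Corollary \ref{cor_def} and the trivial linear-algebra observation $T_pD_1\cap T_pD_2=\{0\}$.
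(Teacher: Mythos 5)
Your proposal is correct and follows essentially the same route as the paper: factor $\vartheta$ through the forgetful maps $H^1(T_{B'}(-\log D'))\to H^1(T_{B'}(-\log D'_i))\to H^1(T_{B'})$, apply Corollary \ref{cor_def} to each $D_i$, and conclude from transversality that $v=0$. The only difference is that you spell out the sheaf inclusions behind the factorization, which the paper takes for granted.
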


\begin{proof} 
We have that $\vartheta$ factors through the analogous map for $D_j$, $j=1,2$:
\[
H^1(T_{B'}(-\log(D'))) \to H^1(T_{B'}(-\log(D_j))) \to H^1(B',T_{B'}) \qquad \textrm{ for } j=1,2.
\]
Hence, the image of $\vartheta$ is contained in the image of both $H^1(T_{B'}(-\log(D_j)))$ for $j=1,2$. Than we apply the Corollary \ref{cor_def} and we obtain a vector $v$ which is tangent to both $D_1$ and $D_2$.
Finally, observe that if a vector is tangent to two transversal curves must vanish. 
\end{proof}

\begin{cor}\label{cor_deformation} Let $D$ be as in Proposition \ref{prop_deformation} and suppose moreover $H^0(D', \oo_{D'}(D'))=0$. Then the  composition
\[
 H^1(B', T_{B'}(-\log D')) \to H^1(B,T_{B})
\]
is injective.
\end{cor}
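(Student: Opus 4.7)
The composition in question factors as
\[
H^1(B', T_{B'}(-\log D')) \xrightarrow{\vartheta} H^1(B', T_{B'}) \xrightarrow{\sigma_*} H^1(B, T_B),
\]
so the plan is to analyze both arrows separately and then glue the information using Proposition \ref{prop_deformation}.

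\textbf{Step 1: inject $\vartheta$.} I would use the standard residue exact sequence on $B'$
\[
0 \to T_{B'}(-\log D') \to T_{B'} \to \mathcal{N}_{D'/B'} \to 0,
\]
where $\mathcal{N}_{D'/B'}\cong \oo_{D'}(D')$ since $D'$ is smooth (in the case $D'$ is a normal crossing divisor one works component by component, or uses the decomposition into smooth pieces; all components here are smooth by hypothesis). Taking cohomology gives
\[
H^0(D', \oo_{D'}(D')) \to H^1(B', T_{B'}(-\log D')) \xrightarrow{\vartheta} H^1(B', T_{B'}),
\]
and the assumption $H^0(D', \oo_{D'}(D'))=0$ forces $\vartheta$ to be injective.

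\textbf{Step 2: identify the kernel of $\sigma_*$.} I would invoke the exact sequence recalled in the excerpt,
\[
0 \to \sigma_* T_{B'} \to T_B \to T_p B \to 0,
\]
together with the isomorphism $H^1(B', T_{B'}) \cong H^1(B, \sigma_* T_{B'})$ (which holds because $R^1\sigma_* T_{B'}=0$ for a point blow-up, so the Leray spectral sequence degenerates). The resulting long exact sequence reads
\[
H^0(B, T_B) \to T_p B \xrightarrow{\psi} H^1(B', T_{B'}) \xrightarrow{\sigma_*} H^1(B, T_B) \to 0,
\]
where the last zero comes from $H^1(T_p B)=0$ (a skyscraper sheaf). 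Hence $\ker \sigma_* = \mathrm{Im}\,\psi$.

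\textbf{Step 3: combine.} Let $\alpha \in H^1(B', T_{B'}(-\log D'))$ map to zero in $H^1(B, T_B)$. By Step 2, $\vartheta(\alpha) = \psi(v)$ for some $v \in T_p B$, so
\[
\vartheta(\alpha) \in \vartheta\bigl(H^1(B', T_{B'}(-\log D'))\bigr) \cap \psi(T_p B).
\]
Proposition \ref{prop_deformation} asserts that this intersection is $\{0\}$, hence $\vartheta(\alpha)=0$, and then Step 1 gives $\alpha = 0$. This proves injectivity.

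The main conceptual point, and the only substantive obstacle, is the identification of $\ker \sigma_*$ with $\mathrm{Im}\,\psi$; everything else is a direct packaging of results already established in the section. I would also note that the hypothesis $H^0(D', \oo_{D'}(D'))=0$ is natural in the intended application, since the rigidity conditions $H^0(D_i, \oo_{D_i}(D_i)) = 0$ on the components ensure the vanishing after a small additional check on how the pieces of $D'$ sit inside $B'$.
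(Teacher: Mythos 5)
Your proof is correct and follows essentially the same route as the paper: the paper's argument is precisely the diagram with the exact column coming from the residue sequence (where $H^0(D',\oo_{D'}(D'))=0$ injects $\vartheta$) and the exact row $T_pB \xrightarrow{\psi} H^1(T_{B'}) \to H^1(T_B)$, combined with Proposition \ref{prop_deformation}. Your Steps 1--3 simply spell out in words what the paper's diagram encodes.
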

\begin{proof}
The proof follows directly from the Proposition \ref{prop_deformation} and the following diagram with exact row and column.
{\small
\[
\begin{xy}
\xymatrix{
&   H^0(\oo_{D'}(D')) = 0   \ar@{->}[d] \\
&  H^1(T_{B'}(-\log D')) \ar@{->}[d]^{\vartheta}    \ar@{->}[dr] \\
T_pB \ar@{->}[r]^{\psi}  & H^1(T_{B'}) \ar@{->}[r] & H^1(T_B) .
 }
\end{xy}
\]}
\end{proof}

We conclude the section with the following general result.

\begin{lem}\label{lem_prodElliptic} Let $A$ be an abelian surface isogenous to a product of elliptic curves $T_1 \times T_2$. 
Let $H \subset H^1(T_A)$ be the linear subspace corresponding to the projective deformations of $A$ and let $H_j \subset H^1(T_A)$ be the linear subspaces corresponding to the deformations preserving the fibration $A \longrightarrow T_j$ for $j=1,2$. Then $H$, $H_1$ and $H_2$ are three different hyperplanes such that the intersection of any two of them is contained in the third.
\end{lem}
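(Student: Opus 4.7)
The plan is to identify each of the three subspaces with the kernel of a cup-product map from $H^1(T_A)$ to $H^{0,2}(A)$, and then deduce everything from the single numerical relation $[L] \equiv [F_1]+[F_2]$ in $H^{1,1}(A,\RR)$ that follows from Proposition \ref{prop_2L}.

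First I would recall that, since $A$ is an abelian surface, $T_A \cong \OO_A^{\oplus 2}$ is trivial and $H^{0,2}(A)$ is one-dimensional. For any nonzero class $\alpha \in H^{1,1}(A)$ the contraction/cup-product
\[
c_\alpha \colon H^1(T_A) \longrightarrow H^{0,2}(A), \qquad \xi \longmapsto \xi \lrcorner\, \alpha,
\]
is a nonzero linear map and hence its kernel is a hyperplane. Moreover $\ker c_\alpha$ consists precisely of those first-order deformations of $A$ along which $\alpha$ stays of type $(1,1)$, and $\ker c_\alpha = \ker c_\beta$ whenever $\alpha$ and $\beta$ are proportional.

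Next I would identify each of the three subspaces as such a kernel. Let $F_j$ denote the class of a fibre of $f_j\colon A \to T_j$. The subspace $H$ is by definition the hyperplane of deformations preserving the $(1,2)$-polarization, hence $H=\ker c_{[L]}$. For $H_j$, the existence of the fibration $f_j$ is equivalent to the existence in $A$ of the elliptic subvariety $\Lambda_j$ parallel to the fibres, and a first-order deformation of $A$ lifts $\Lambda_j$ exactly when the class $[F_j]=[\Lambda_j]$ remains of type $(1,1)$; thus $H_j=\ker c_{[F_j]}$.

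By Proposition \ref{prop_2L} we have $2L \equiv f_1^*(a_3+a_4)+f_2^*(b_1+b_3)$, so that in $H^{1,1}(A,\RR)$ we obtain the key linear relation
\[
[L] = [F_1] + [F_2].
\]
The three classes $[F_1]$, $[F_2]$ and $[L]$ are pairwise non-proportional: $[F_1]$ and $[F_2]$ are linearly independent in $\mathrm{NS}(A)_\RR$ because $[F_1]^2=[F_2]^2=0$ while $[F_1]\cdot[F_2]>0$, and then $[L]=[F_1]+[F_2]$ is not proportional to either. Hence $H$, $H_1$, $H_2$ are three \emph{distinct} hyperplanes in $H^1(T_A)$.

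Finally, the claim on mutual intersections follows directly from the bilinearity of $c_\alpha$ in $\alpha$: if $\xi \in H_1 \cap H_2$ then $c_{[F_1]}(\xi)=c_{[F_2]}(\xi)=0$, so $c_{[L]}(\xi) = c_{[F_1]}(\xi)+c_{[F_2]}(\xi)=0$ and $\xi \in H$; symmetrically, $\xi \in H_1 \cap H$ forces $c_{[F_2]}(\xi) = c_{[L]}(\xi)-c_{[F_1]}(\xi)=0$, giving $\xi \in H_2$, and analogously $H_2 \cap H \subset H_1$. The only delicate point that I expect to need a careful line or two is the identification $H_j = \ker c_{[F_j]}$, i.e.\ that at first order preserving the fibration $f_j$ is equivalent to preserving the Hodge type of the class $[F_j]$; everything else is linear algebra combined with the single computation of Proposition \ref{prop_2L}.
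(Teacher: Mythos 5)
Your proof is correct, but it takes a genuinely different route from the one in the paper. The paper reduces to $A=T_1\times T_2$ via the isogeny and then computes in explicit period-matrix coordinates: $H^1(T_A)$ is identified with the space of $2\times 2$ matrices $\tau=\begin{pmatrix} a&b\\ c&d\end{pmatrix}$, the Riemann relations for the (product) polarization give $H=\{b=c\}$, the two fibrations give $H_1=\{b=0\}$ and $H_2=\{c=0\}$, and the containments are then read off directly. Your argument instead stays on $A$ and identifies each subspace as the kernel of the contraction $c_\alpha\colon H^1(T_A)\to H^{0,2}(A)$ against a N\'eron--Severi class, reducing everything to the single relation $[L]=[F_1]+[F_2]$ coming from Proposition \ref{prop_2L} together with linearity of $\alpha\mapsto c_\alpha$. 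What your approach buys: it avoids choosing a normalized basis and the reduction to the product, it makes the reason for the mutual containments transparent (they are exactly the relation in $\mathrm{NS}(A)$), and it pins down precisely which hyperplane ``the projective deformations'' means, namely $\ker c_{[L]}$ --- a point the paper leaves implicit, and which matters because for $T_1$ isogenous to $T_2$ the N\'eron--Severi group has rank $\geq 3$ and a polarization outside the span of $[F_1],[F_2]$ would break the statement. What the paper's approach buys is self-containedness: it uses nothing beyond the period matrix. The one step you rightly flag as delicate, $H_j=\ker c_{[F_j]}$, does need a sentence of justification: the first-order deformations to which the subtorus $\Lambda_j$ lifts form $\ker\bigl(H^1(T_A)\to H^1(\mathcal{N}_{\Lambda_j/A})\bigr)$, and composing with the (here bijective) semiregularity map $H^1(\mathcal{N}_{\Lambda_j/A})\to H^2(\OO_A)$ turns this into $\ker c_{[F_j]}$; alternatively one checks it in the period-matrix coordinates the paper uses. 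With that supplied, your proof is complete.
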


\begin{proof}
The isogeny maps  $H^1(T_A)$ isomorphically to $H^1(T_{T_1\times T_2})$ by a map preserving $H$, $H_1$ and $H_2$. Therefore we may assume without loss of generality $A=T_1 \times T_2$.

 For a product of curves the period matrix assumes the form
\[
 \Lambda\,=\,\Omega\,\ZZ^4,\quad
\Omega\,:\,\ZZ^4\,\longrightarrow\,\CC^2,\qquad
x\,\longmapsto\, \Omega\,x=  \left( \begin{array}{c|c} \Delta_1 & \tau\end{array} \right)=
\left( \begin{array}{cc|cc} 1 & 0 & \alpha & 0 \\ 0 & 1 & 0 & \delta \end{array} \right)
\]

It is well known that one can identify the deformation space $H^1(T_A)$ of a polarized abelian surface  $A = V/\Lambda$  with the space of the square matrices $\tau$ (see  \cite[Chapter 1]{HKW}). For $\tau=\begin{pmatrix}
a&b\\
c&d\\
\end{pmatrix}$ we obtain the deformation given by 
\[
 \left( \begin{array}{c|c} \Delta_1 & \tau\end{array} \right)=
\left( \begin{array}{cc|cc} 1 & 0 & \alpha+a\epsilon & b\epsilon \\  0 & 1 & c\epsilon & \delta+d\epsilon \end{array} \right).
\]

The Riemann--Conditions for an abelian surface with a principal polarization yields the existence of an integral basis $\{\lambda_i\}_{i}$ for $\Lambda$ and a complex basis $\{e_i\}$ for $V$ such that the  period matrix can be normalized so that the matrix $\tau$  is symmetric with positive imaginary part (see \cite{GH} p.306), so
\[H=\{b=c\}.\]
The subspaces $H_j$ are respectively
\begin{align*}
b&=0&
c&=0
\end{align*}
and this concludes the lemma.
\end{proof}

\section{The moduli space}\label{moduli}

The following result can be found in \cite[Section 5]{Ca11}.
\begin{prop} \label{prop.degree.alb} Let $S$ be a minimal surface of
general type with $q(S) \geq 2$
 and Albanese map $\alpha \colon S \to A$, and assume that $\alpha(S)$
is a surface. Then this is a topological property. If in addition
$q(S)=2$, then the degree of $\alpha$ is a topological
invariant.
\end{prop}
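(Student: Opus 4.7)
The strategy is to express both the Albanese dimension and, for $q=2$, the degree of $\alpha$ as invariants of the cohomology ring $H^\ast(S,\ZZ)$. By functoriality and the universal property of the Albanese, $\alpha^\ast\colon H^1(A,\ZZ)/\mathrm{tors}\to H^1(S,\ZZ)/\mathrm{tors}$ is an isomorphism; moreover, since $A$ is a complex torus, $H^\ast(A,\ZZ)/\mathrm{tors}=\Lambda^\ast H^1(A,\ZZ)/\mathrm{tors}$ as graded rings. Therefore, after this identification in degree one, the pull-back $\alpha^\ast$ on $H^k$ coincides with the iterated cup-product map
\[
\mu_k\colon \Lambda^k\bigl(H^1(S,\ZZ)/\mathrm{tors}\bigr)\longrightarrow H^k(S,\ZZ)/\mathrm{tors},
\]
which depends only on the cohomology ring of $S$ and is therefore a topological invariant of the oriented $4$-manifold underlying $S$.

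First I would show that ``$\alpha(S)$ is a surface'' is equivalent to the non-vanishing of $\mu_4$. If $\dim\alpha(S)\le 1$, then $\alpha$ factors through a (possibly singular) curve, so $\alpha^\ast$ on $H^4$ factors through $H^4$ of that curve and is zero, whence $\mu_4=0$. Conversely, if $\alpha(S)$ is two-dimensional, I would pick an integral K\"ahler class $\omega\in H^2(A,\ZZ)$ and compute
\[
\int_S\alpha^\ast(\omega\cup\omega)\;=\;\deg\bigl(\alpha\colon S\to\alpha(S)\bigr)\cdot\int_{\alpha(S)}\omega^2\;>\;0,
\]
so that $\mu_4(\omega\cup\omega)\neq 0$ in $H^4(S,\ZZ)$. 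Since the condition $\mu_4\not\equiv0$ is manifestly topological, the first assertion of the proposition follows.

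Assume now $q(S)=2$. Then both $\Lambda^4 H^1(S,\ZZ)/\mathrm{tors}$ and $H^4(S,\ZZ)/\mathrm{tors}$ are free abelian groups of rank one, canonically oriented by the complex structure, so $\mu_4$ is multiplication by a well-defined topological integer $d$. To identify $|d|$ with $\deg(\alpha)$ I would pull back the Poincar\'e dual $[p]^\vee\in H^4(A,\ZZ)$ of a generic point $p\in A$ outside the branch locus of $\alpha$: the preimage $\alpha^{-1}(p)$ is a reduced $0$-cycle of length $\deg(\alpha)$, hence $\alpha^\ast[p]^\vee=\deg(\alpha)\cdot[\mathrm{pt}]^\vee_S$. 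Since $[p]^\vee$ generates $H^4(A,\ZZ)\simeq\Lambda^4 H^1(A,\ZZ)$, this gives $|d|=\deg(\alpha)$, completing the proof.

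The most delicate point I foresee is the integral book-keeping in the identification of Step 1, namely the classical fact that, for a complex torus, the fundamental class in $H^{2q}$ agrees up to sign with the top wedge of any $\ZZ$-basis of $H^1/\mathrm{tors}$; without this compatibility the topological integer $d$ would recover $\deg(\alpha)$ only up to a fixed multiplicative constant. Every other step is a routine manipulation of the cup product, the projection formula $\alpha_\ast\alpha^\ast=\deg(\alpha)\cdot\mathrm{id}$, and the generically finite nature of $\alpha$ onto its image.
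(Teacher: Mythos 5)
Your proposal is correct and follows essentially the same route as the paper: both identify $\alpha^*$ with the intrinsic cup-product map $\bigwedge^* H^1(S,\ZZ)\to H^*(S,\ZZ)$ coming from the exterior-algebra structure of $H^*(\mathrm{Alb}(S),\ZZ)$, and read off the degree (for $q=2$) as the index of the image of $\bigwedge^4 H^1(S,\ZZ)$ in $H^4(S,\ZZ)$. You merely spell out the details the paper delegates to \cite{Ca91}, including the non-vanishing criterion for the Albanese dimension, so there is nothing to correct.
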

\begin{proof}
By \cite{Ca91} the Albanese map $\alpha$ induces a homomorphism of
cohomology algebras
\begin{equation*}\alpha^*\colon H^*(\textrm{Alb}(S), \, \mathbb{Z}) \longrightarrow H^*(S, \, \mathbb{Z})
\end{equation*}
and $H^*(\textrm{Alb}(S), \mathbb{Z})$ is isomorphic to the full
exterior algebra 
\[ \bigwedge^*  H^1(\textrm{Alb}(S), \,
\mathbb{Z}))\cong\bigwedge^*  H^1(S, \, \mathbb{Z}).
\]
 In particular,
if $q=2$ the degree of the Albanese map equals the index of the
image of $\bigwedge^4  H^1(S, \, \mathbb{Z})$ inside $H^4(S, \,
\mathbb{Z})$ and it is therefore a topological invariant.
\end{proof}

Consider a surface $S$ in ${\mathcal M}$. 
By Proposition \ref{prop.degree.alb} it follows that one may study
the deformations of $S$ by relating them to those of the flat double
cover $\beta \colon S' \to B'$. By \cite[p. 162]{Se06} we have an
exact sequence
\begin{equation} \label{suc.def.S}
0 \longrightarrow T_{S'} \longrightarrow \beta^{*}T_{B'} \longrightarrow \mathcal{N}_{\beta}  \longrightarrow
0,
\end{equation}
where $\mathcal{N}_{\beta}$ is a coherent sheaf supported on the ramification divisor $\hat{R}+\hat{C_1}+\hat{E}$
called the \emph{normal sheaf of} $\beta$.

 \begin{lem}\label{Lem_Normal} Keeping the notation above it holds
 \begin{equation}\label{eq_normalS}
 H^i(S',\mathcal{N}_{\beta})=  H^i(\mathcal{O}_{\hat{R}}(2\hat{R})) \oplus  H^i(\mathcal{O}_{\hat{C_1}}(2\hat{C_1})) \oplus  H^i(\mathcal{O}_{\hat{E}}(2\hat{E})), \, i=0,1.
 \end{equation}
 Moreover we have:
 \begin{equation*}
 h^0\big(\mathcal{O}_{\hat{R}}(2\hat{R})\big)=0, \quad   h^0\big(\mathcal{O}_{\hat{C_1}}(2\hat{C_1})\big)=0, \quad  h^0\big(\mathcal{O}_{\hat{E}}(2\hat{E})\big)=0, 
 \end{equation*}
 \begin{equation*}
 h^1\big(\mathcal{O}_{\hat{R}}(2\hat{R})\big)=2,\quad    h^1\big(\mathcal{O}_{\hat{C_1}}(2\hat{C_1})\big)=2, \quad    h^1\big(\mathcal{O}_{\hat{E}}(2\hat{E})\big)=1. 
 \end{equation*}
 \end{lem}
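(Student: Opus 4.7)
The plan is to exploit pairwise disjointness of the three branch components on $B'$ in order to split $\mathcal{N}_{\beta}$ as a direct sum of three line bundles, one on each ramification curve of $S'$, and then to compute the cohomology of each summand from the numerical data already assembled on $B'$.

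First I would verify disjointness. Using $R=\sigma_4^*\tilde t-2F$, $C_1=\sigma_4^*\tilde C_1-F$, $E'=\sigma_4^* E-F$ together with the local data $\tilde t\cdot E=2$, $\tilde C_1\cdot E=1$, $\tilde t\cdot\tilde C_1=2$ on $B$ (all concentrated at the infinitely near point $p'$), a direct calculation yields $R\cdot E'=\tilde t\cdot E+2F^2=0$, $C_1\cdot E'=\tilde C_1\cdot E+F^2=0$ and $R\cdot C_1=\tilde t\cdot\tilde C_1+2F^2=0$. Since the three curves are irreducible, this forces pairwise disjointness on $B'$ and, via the isomorphisms $\hat D\to D$ that $\beta$ restricts to, also on $S'$. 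Next, since $\beta$ is a flat double cover branched along the smooth, disjoint divisor $R+C_1+E'$, a local computation in the model $z^2=x$ (the cokernel of $d\beta$ being generated by $\beta^*\partial/\partial x$) identifies $\mathcal{N}_{\beta}|_{\hat D}\cong (\beta|_{\hat D})^*\mathcal{N}_{D/B'}\cong\mathcal{O}_{\hat D}(2\hat D)$, using $2\hat D=\beta^*D$. Disjointness then yields the direct sum decomposition and the cohomology formula \eqref{eq_normalS}.

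The numerics reduce to Riemann--Roch applied to each summand. For $\hat E\cong\PP^1$ with $\hat E^2=-1$, the bundle $\mathcal{O}_{\hat E}(2\hat E)$ has degree $-2$ on $\PP^1$, giving $h^0=0$ and $h^1=1$. For $\hat C_1$ elliptic with $\hat C_1^2=-1$, the bundle is again of degree $-2$, so $h^0=0$ and $h^1=2$. For $\hat R$, one first computes $R^2=0$ on $B'$ (using $t^2=8$ on $A$, obtained from $t\equiv f_1^*a_1+f_1^*a_2+f_2^*b_3$ together with $f_1^*a_i\cdot f_2^*b_j=2$, reduced by the successive multiplicities $2$ at $p$ and at the infinitely near $p'$) and $K_{B'}\cdot R=4$ (from $K_{B'}=E'+2F$, $R\cdot E'=0$, $R\cdot F=2$); adjunction gives $g(R)=3$, so $\hat R$ has genus $3$ and $\hat R^2=0$, and Riemann--Roch yields $h^1(\mathcal{O}_{\hat R}(2\hat R))-h^0(\mathcal{O}_{\hat R}(2\hat R))=2$.

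The delicate point, and what I expect to be the main obstacle, is establishing $h^0(\mathcal{O}_{\hat R}(2\hat R))=0$, equivalently the non-triviality of $\mathcal{O}_R(R)$ on $R$ (under the isomorphism $\beta|_{\hat R}$). Adjunction gives $\mathcal{O}_R(R)\cong K_R\otimes\mathcal{O}_R(-2q_1-2q_2)$, where $q_1,q_2$ are the two points of $R\cap F$ (note $R\cap E'=\varnothing$). So one must exclude the linear equivalence $K_R\sim 2q_1+2q_2$ on the smooth genus-three curve $R$, which would force $R$ to be hyperelliptic with $q_1+q_2$ a $g^1_2$; I would rule this out by a direct analysis of the tacnode resolution, using that the pair $q_1,q_2$ corresponds to the two branches of the node of $\tilde t$ at $p'$ and that the tangent configuration of $t+C_1$ on $A$ is incompatible with such a hyperelliptic arrangement. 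Once this non-triviality is secured, Riemann--Roch forces $h^1(\mathcal{O}_{\hat R}(2\hat R))=2$, completing the proof.
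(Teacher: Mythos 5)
Your decomposition of $\mathcal{N}_{\beta}$, the disjointness computation on $B'$, and the Riemann--Roch numerics for $\hat{E}$, $\hat{C_1}$ and $\hat{R}$ all match the paper and are correct; you also correctly isolate the one nontrivial point, namely $h^0(\mathcal{O}_{\hat{R}}(2\hat{R}))=0$, equivalently excluding $K_R\sim 2q_1+2q_2$. But your proposed way of excluding it rests on a false implication: on a curve of genus $3$ the relation $K_R\sim 2q_1+2q_2$ does \emph{not} force $R$ to be hyperelliptic. By Riemann--Roch $h^0(q_1+q_2)=h^0(K_R-q_1-q_2)$, so this relation produces no $g^1_2$; on a non-hyperelliptic genus-$3$ curve it says exactly that the line through $q_1,q_2$ in the canonical plane-quartic model is a bitangent, and every smooth quartic has $28$ bitangents. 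Hence even a successful proof that $R$ is non-hyperelliptic, or that the tangent configuration is ``incompatible with a hyperelliptic arrangement,'' would not close the gap: you would still have to rule out that $q_1+q_2$ is cut out by a bitangent, and that cannot be done by the numerical data alone.

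The paper closes this gap using the extra symmetry of $R$. The curve $R$ carries a $(\ZZ/2\ZZ)^2$-action inherited from the bidouble cover, and by \cite[Lemma 2.15]{ortega} it is not hyperelliptic; its canonical model is therefore a plane quartic whose equation $F$ is \emph{biquadratic}, i.e.\ a polynomial in $x_0^2,x_1^2,x_2^2$, the divisor $q_1+q_2$ is invariant, and $q_1,q_2$ lie on a coordinate line $\{x_j=0\}$. For such an $F$ the partial derivative $\partial F/\partial x_j$ vanishes identically along $\{x_j=0\}$, so a point of $R$ on that line is singular as soon as the binary quartic $F|_{x_j=0}$ has a multiple root there; a bitangent at $q_1,q_2$ would produce exactly two such double roots, contradicting the smoothness of $R$. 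This structural use of the group action is the missing idea in your argument, and it is the step you would need to supply to make the vanishing $h^0(\mathcal{O}_{\hat{R}}(2\hat{R}))=0$ go through.
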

  \begin{proof} The ramification divisor of the double cover $\beta \colon S' \longrightarrow B$ is the disjoint union of the divisors $\hat{E}$, $\hat{R}$ and $\hat{C_1}$, this is enough for \eqref{eq_normalS}. 
  
  Since $\hat{C}$ is an elliptic curve  with $\hat{C}^2=-1$, we have that $2\hat{C}$ is not effective on $\hat{C}$ and by Riemann--Roch we conclude that $h^1\big(\mathcal{O}_{\hat{C_1}}(2\hat{C_1})\big)=2$. 
  
  The computations for $\hat{E} \cong \PP^1$ are straightforward. 
  
  Finally we work on $\hat{R}$. Recall that $g(\hat{R})=3$ and $\hat{R}^2=0$. Thus, by Riemann--Roch we have $\chi\big(\mathcal{O}_{\hat{R}}(2\hat{R})\big)=-2$. Therefore, it is sufficient to prove that $h^0\big(\mathcal{O}_{\hat{R}}(2\hat{R})\big)=0$.
  
We notice that $H^0(\oo_{\hat{R}}(2\hat{R}))=H^0(\oo_R(R))=H^0(\mathcal{N}_{R|B})$. Recall that by adjunction the normal bundle of a curve in an abelian surface equals its canonical bundle, so $\mathcal{N}_{t|A}=\omega_t$. The map $\nu =(\sigma_4 \circ \sigma_3)|_R\colon R \longrightarrow t$ is the normalization of $t$. Let $q_1, q_2 \in R$ such that $\nu(q_i)=p$ with $i=1,2$ and recall that $p$ is the tacnode of $t$. We have
\[ \omega_R = \nu^*\omega_t \otimes \oo_R(-2q_1-2q_2), \quad \mathcal{\mathcal{N}}_{R|B} = \nu^*\mathcal{N}_{t|A}\otimes \oo_R(-4q_1-4q_2),
\]
 this yields
 \begin{equation}\label{eq_NRB}
 \mathcal{N}_{R|B} = \omega_R \otimes \oo_R(-2q_1-2q_2).
 \end{equation}

 By construction $R$ is a smooth irreducible curve of genus $3$ with a $(\ZZ/2\ZZ)^2$-action, by \cite[Lemma 2.15]{ortega} $R$ is not hyperelliptic. Thus, $R$ is a plane quartic curve invariant under the action 
 \[ (x_0 : x_1 : x_2) \mapsto (\pm x_0, \pm x_1, \pm x_2),
 \]
 the equation defining it is biquadratic, and the divisor $q_1 + q_2$ is invariant. 
 This means that $q_1$ and $q_2$ have a stabilizer of order $2$ and lie on a coordinate line $x_j$. 
 
 By \eqref{eq_NRB} 
  \[H^0(\oo_{\hat{R}}(2\hat{R})) = 0 \Leftrightarrow (x_j) \textrm{ is not a bitangent}
 \]

Since the quartic equation defining $R$ is biquadratic, this would imply that $R$ is singular in $q_1$ and $q_2$, but this is absurd. 
  \end{proof}
  
  Recall that $S'$ is a surfaces of general type, hence $h^0(T_{S'})=0$ and  using the bit of information of the previous lemma, the sequence \eqref{suc.def.S} induces the following long sequence in cohomology. 

\begin{equation*} \label{suc.coh.def.S}
0  \longrightarrow
H^1(T_{S'}) \longrightarrow H^1(\beta^{*}T_{B'}) \longrightarrow H^1(\mathcal{N}_{\beta})  \longrightarrow H^2(T_{S'}) \longrightarrow H^2(\beta^{*}T_B) \longrightarrow
0.
\end{equation*}

  \begin{prop} \label{prop.coh.TB}
Keeping the notation as above,  then the sheaf $\beta^*T_{B'}$ satisfies
\begin{equation*}\begin{split}
h^0(S',\beta^*T_{B'})=&h^0(B', T_{B'} \otimes \mathcal{L}_{B'}^{-1}),  \\  h^1(S',\beta^*T_{B'})=&6+h^1(B', T_{B'} \otimes \mathcal{L}_{B'}^{-1}), \\  h^2(S',
\,\beta^*T_{B'})=&2+h^2(B', T_{B'} \otimes \mathcal{L}_{B'}^{-1}). 
\end{split}
\end{equation*}
\end{prop}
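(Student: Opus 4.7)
The plan is to identify $H^\ast(S', \beta^\ast T_{B'})$ with cohomology on $B'$ via the projection formula, then reduce the computation on $B'$ to the (known) cohomology of $T_A$ on the abelian surface by running the blow-up sequence twice.

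Step one: since $\beta\colon S'\to B'$ is a flat double cover with branch divisor in $|2\mL_{B'}|$, we have $\beta_\ast \mathcal{O}_{S'}\cong\mathcal{O}_{B'}\oplus\mL_{B'}^{-1}$. The projection formula then gives
\[
\beta_\ast(\beta^\ast T_{B'})\cong T_{B'}\oplus\bigl(T_{B'}\otimes\mL_{B'}^{-1}\bigr),
\]
and, since $\beta$ is finite,
\[
H^i(S',\beta^\ast T_{B'})\cong H^i(B',T_{B'})\oplus H^i\bigl(B',T_{B'}\otimes\mL_{B'}^{-1}\bigr)
\]
for every $i$. Thus the three asserted equalities are equivalent to
\[
h^0(T_{B'})=0,\qquad h^1(T_{B'})=6,\qquad h^2(T_{B'})=2.
\]

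Step two: I would recall $B'\xrightarrow{\sigma_4} B\xrightarrow{\sigma_3} A$, and use twice the short exact sequence
\[
0\to\sigma_\ast T_{\tilde X}\to T_X\to T_xX\to 0
\]
already recorded in the excerpt (before Theorem \ref{thm_def}). The key auxiliary vanishing $R^j\sigma_\ast T_{\tilde X}=0$ for $j\geq 1$ follows because $T_{\tilde X}$ restricted to the exceptional $\mathbb{P}^1$ splits as $\mathcal{O}(2)\oplus\mathcal{O}(-1)$, whose $H^1$ vanishes. Hence $H^i(\tilde X,T_{\tilde X})\cong H^i(X,\sigma_\ast T_{\tilde X})$ for all $i$, and the long exact sequence associated to the short exact sequence above expresses the Hodge numbers of $T_{\tilde X}$ in terms of those of $T_X$ and of the evaluation map $\mathrm{ev}_x\colon H^0(T_X)\to T_xX$.

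Step three: on $A$ the tangent bundle is trivial of rank $2$, so $h^i(T_A)=(2,4,2)$ for $i=0,1,2$ and the evaluation map $\mathrm{ev}_p$ is an isomorphism. Plugging this into the long exact sequence for $\sigma_3$ yields $(h^0,h^1,h^2)(T_B)=(0,4,2)$. Running the same argument for $\sigma_4$, the evaluation $H^0(T_B)\to T_qB$ is automatically zero since $H^0(T_B)=0$, so $T_qB$ contributes its full dimension $2$ to $h^1$, giving $(h^0,h^1,h^2)(T_{B'})=(0,6,2)$, which is what we needed. I do not expect any genuine obstacle here: the argument is a routine combination of the projection formula, the degeneration of the Leray spectral sequence for a blow-up, and the triviality of $T_A$; the only care needed is to track the evaluation maps correctly and to observe the $R^1\sigma_\ast$ vanishing, so that the global and local computations line up.
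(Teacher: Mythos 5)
Your argument is correct and follows essentially the same route as the paper: reduce via $\beta_*\mathcal{O}_{S'}\cong\mathcal{O}_{B'}\oplus\mathcal{L}_{B'}^{-1}$ and the projection formula to showing $(h^0,h^1,h^2)(T_{B'})=(0,6,2)$, then compute these by running a blow-up exact sequence twice starting from the trivial bundle $T_A$. The only (immaterial) difference is that the paper uses the sequences $0\to T_{B}\to\sigma_3^*T_A\to\mathcal{O}_E(-E)\to 0$ and $0\to T_{B'}\to\sigma_4^*T_B\to\mathcal{O}_F(-F)\to 0$ on the blown-up surfaces, whereas you use the pushed-forward sequence $0\to\sigma_*T_{\tilde X}\to T_X\to T_xX\to 0$ together with $R^1\sigma_*T_{\tilde X}=0$; both are standard and yield the same numbers via the same evaluation-map bookkeeping.
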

\begin{proof}
Since $\beta \colon S' \to B'$ is a finite map, by using projection
formula and the Leray spectral sequence we deduce
\begin{equation*}
h^i(S', \, \oo_{S'})=h^i(B', \,
\oo_{B'})+h^i(B', \,  \mathcal{L}_{B'}^{-1}), \quad i=0,\,1,\,2.
\end{equation*}
Recall that $p_g(S')=q(S')=2$ and $B'$ is an abelian surface blown up twice, then we have
\begin{equation}\label{eq.LB} 
h^0(B', \, \mL_{B'}^{-1})=0, \quad h^1(B,\, \mL_{B'}^{-1})=0, \quad h^2(B', \,
\mL_{B'}^{-1})=1,
\end{equation}
By the same argument above we have
\begin{equation*} 
h^i(S', \, \beta^*T_{B'})=h^i(B', \, \beta_* \beta^* T_{B'})=h^i(B', \,
T_{B'})+h^i(B', \, T_{B'} \otimes \mathcal{L}_{B'}^{-1}), \quad i=0,\,1,\,2.
\end{equation*}
We look first at $\sigma_3$. There is a short exact sequence
\begin{equation} \label{eq.blow-up.tangent}
0 \lr T_{B} \to \sigma_3^*T_A \to \oo_E(-E) \to 0,
\end{equation}
see \cite[p. 73]{Se06} for the general setting of a blow up. Then a direct computation shows
\begin{equation*} 
h^0(B, \, T_{B})=0, \quad h^1(B, \, T_{B})=4, \quad h^2(B, \, T_{B})=2.
\end{equation*}
The analogous computation for $\sigma_4$, for the exact sequence
\begin{equation} \label{eq.blow-up.tangent4}
0 \lr T_{B'} \to \sigma_4^*T_{B} \to \oo_F(-F) \to 0.
\end{equation}
 yields
\begin{equation*} 
h^0(B', \, T_{B'})=0, \quad h^1(B', \, T_{B'})=6, \quad h^2(B', \, T_{B'})=2.
\end{equation*}
Therefore the claim follows.

\end{proof}
Let us consider the exact sequence
\begin{equation}\label{suc_fsupEF}
0 \lr T_{B'} \to (\sigma_4 \circ \sigma_3)^*T_{A} \to \mathcal{N}_{\sigma_4 \circ \sigma_3} \to 0, 
\end{equation}
where the last sheaf is supported on $E$ and $F$. We tensor \eqref{suc_fsupEF} by $\mathcal{L}^{-1}_{B'}$ and we obtain the sequence
\begin{equation*}\label{suc_fsuppEF}
0 \lr T_{B'} \otimes \mathcal{L}^{-1}_{B'}  \to (\mathcal{L}^{-1}_{B'})^{\oplus 2} \to \mathcal{N}_{\sigma_4 \circ \sigma_3} \otimes \mathcal{L}^{-1}_{B'} \to 0.
\end{equation*}
Considering the induced long exact sequence in cohomology,  by \eqref{eq.LB} 

\begin{equation} \label{eq_anti0}
h^0\big(T_{B'} \otimes \mathcal{L}^{-1}_{B'} \big) =0
\end{equation}
\begin{equation} \label{eq_anti}
h^1\big(T_{B'} \otimes \mathcal{L}^{-1}_{B'} \big) = h^0\big(\mathcal{N}_{\sigma_4 \circ \sigma_3} \otimes \mathcal{L}^{-1}_{B'}\big)  
\end{equation}
and
\begin{equation}\label{eq_anti2}
h^2\big(T_{B'} \otimes \mathcal{L}^{-1}_{B'} \big) = h^1\big(\mathcal{N}_{\sigma_4 \circ \sigma_3} \otimes \mathcal{L}^{-1}_{B'}\big) +2\end{equation}

\begin{lem}\label{lem_anti} It holds
\[
 h^0\big(\mathcal{N}_{\sigma_4 \circ \sigma_3} \otimes \mathcal{L}^{-1}_{B'}\big) =2.
\]
\end{lem}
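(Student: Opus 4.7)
The plan is to decompose $\mathcal{N}_{\sigma_4\circ\sigma_3}$ by exploiting the two-step factorization $B'\xrightarrow{\sigma_4} B \xrightarrow{\sigma_3} A$, reducing the computation of $h^0$ to cohomology of explicit line bundles on the exceptional $\mathbb{P}^1$'s $E'$ and $F$.

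The first step is to compare the defining sequences of $\mathcal{N}_{\sigma_4}$ and $\mathcal{N}_{\sigma_4\circ\sigma_3}$. Pulling \eqref{eq.blow-up.tangent} back by $\sigma_4$ produces
\[
0 \to \sigma_4^* T_B \to (\sigma_4\circ\sigma_3)^* T_A \to \sigma_4^* \mathcal{O}_E(-E) \to 0,
\]
left-exact because $\mathrm{Tor}_1(\mathcal{O}_{B'}, \mathcal{O}_E(-E))$ vanishes (seen from the locally free resolution $0 \to \mathcal{O}_B(-2E) \to \mathcal{O}_B(-E) \to \mathcal{O}_E(-E) \to 0$). Coupling this with \eqref{eq.blow-up.tangent4} and applying the snake lemma yields
\[
0 \to \mathcal{N}_{\sigma_4} \to \mathcal{N}_{\sigma_4\circ\sigma_3} \to \sigma_4^*\mathcal{N}_{\sigma_3} \to 0,
\]
with $\mathcal{N}_{\sigma_4} \cong \mathcal{O}_F(-F)$ and, using $\sigma_4^* E = E' + F$ as Cartier divisors, $\sigma_4^*\mathcal{N}_{\sigma_3} \cong \mathcal{O}_{E'+F}(-E'-F)$.

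The second step is to pin down the restrictions of $\mathcal{L}_{B'}$. From the evenness relation $2\mathcal{L}_{B'} \equiv R + C_1 + E'$ together with the intersection numbers $(E')^2 = -2$, $F^2 = -1$, $E' \cdot F = 1$, $R \cdot E' = 0$, $R \cdot F = 2$, $C_1 \cdot E' = 0$, $C_1 \cdot F = 1$ (obtained by tracking the strict transforms through the two-step resolution of the $(3,3)$-singularity of $t+C_1$), I get $\mathcal{L}_{B'}|_{E'} \cong \mathcal{O}_{\mathbb{P}^1}(-1)$ and $\mathcal{L}_{B'}|_F \cong \mathcal{O}_{\mathbb{P}^1}(2)$; the $2$-torsion ambiguity in $\mathcal{L}_{B'}$ is harmless because it is pulled back from $A$ and restricts trivially to the exceptional curves.

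Finally, twisting the snake-lemma sequence by $\mathcal{L}_{B'}^{-1}$, the contribution $\mathcal{O}_F(-F) \otimes \mathcal{L}_{B'}^{-1} \cong \mathcal{O}_{\mathbb{P}^1}(-1)$ has vanishing $H^0$ and $H^1$, so
\[
h^0\bigl(\mathcal{N}_{\sigma_4\circ\sigma_3} \otimes \mathcal{L}_{B'}^{-1}\bigr) = h^0\bigl(\mathcal{O}_{E'+F}(-E'-F) \otimes \mathcal{L}_{B'}^{-1}\bigr).
\]
I would compute the right-hand side from
\[
0 \to \mathcal{O}_{E'}(-E'-2F) \otimes \mathcal{L}_{B'}^{-1} \to \mathcal{O}_{E'+F}(-E'-F) \otimes \mathcal{L}_{B'}^{-1} \to \mathcal{O}_F(-E'-F) \otimes \mathcal{L}_{B'}^{-1} \to 0,
\]
obtained by tensoring $0 \to \mathcal{O}_{E'}(-F) \to \mathcal{O}_{E'+F} \to \mathcal{O}_F \to 0$ with $\mathcal{O}_{B'}(-E'-F) \otimes \mathcal{L}_{B'}^{-1}$. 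The outer terms restrict to $\mathcal{O}_{\mathbb{P}^1}(1)$ and $\mathcal{O}_{\mathbb{P}^1}(-2)$ respectively, and the long exact cohomology sequence immediately gives $h^0 = 2$.

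The main obstacle I foresee is the bookkeeping of intersection numbers arising from the two-step resolution of the tacnodal point, together with the verification that the pullback of \eqref{eq.blow-up.tangent} by $\sigma_4$ remains short exact on the left; both are routine but demand care.
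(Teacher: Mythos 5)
Your proposal is correct and follows essentially the same route as the paper: both build the extension $0 \to \mathcal{O}_F(-F) \to \mathcal{N}_{\sigma_4\circ\sigma_3} \to \sigma_4^*\mathcal{O}_E(-E) \to 0$ from the two blow-up sequences (the paper via a $3\times 3$ diagram, you via the snake lemma), twist by $\mathcal{L}_{B'}^{-1}$ using the same intersection numbers $(R+E'+C_1)\cdot E'=-2$, $(R+E'+C_1)\cdot F=4$, kill the $\mathcal{O}_{\mathbb{P}^1}(-1)$ subsheaf, and identify the sections with those of a degree-$2$ bundle on $E'$ vanishing at the node, giving $h^0=2$. Your degree count of $-2$ on $F$ for the quotient term is in fact the correct one (the paper's ``$-1$'' there is a harmless slip that does not affect $h^0$).
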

  \begin{proof} Recall that we set $E'=\sigma^*_4E$. Let us consider the exact sequence \eqref{eq.blow-up.tangent}, it lifts on $B'$ as
  \[
  0 \to \sigma_4^*T_B \to \oo_{B'}^{\oplus 2} \to \sigma_4^*\oo_{E}(-E)\to 0.
  \]  
We put  this last exact sequence together with \eqref{eq.blow-up.tangent4} as respectively the  middle horizontal sequence and the first vertical sequence  in a diagram. Chasing  the diagram we  obtain the following  {\small
\begin{equation}\label{big_diagram}
\begin{xy}
\xymatrix{
& 0   \ar@{->}[d]& 0  \ar@{->}[d] \\
& T_B \ar@{=}[r] \ar@{->}[d] & T_B \ar@{->}[d] \\
0 \ar@{->}[r] & \sigma^*_4(T_{B_0}) \ar@{->}[r] \ar@{->}[d] & \oo^{\oplus 2}_B \ar@{->}[d]\ar@{->}[r] & \sigma_4^*(\oo_{E}(-E)) \ar@{=}[d] \ar@{->}[r] & 0 \\
0  \ar@{->}[r]  & \oo_F(-F) \ar@{->}[d] \ar@{->}[r] &  \mathcal{N}_{\sigma_4 \circ \sigma_3} \ar@{->}[d]   \ar@{->}[r] & \sigma_4^*(\oo_{E}(-E))   \ar@{->}[r]  &  0 \\
& 0& 0
 }
\end{xy}
\end{equation}}
which is  a diagram with exact rows and columns.
Let us look at the last horizontal sequence. Recall that $F \cong \PP^1 \cong E'$, thus  $\oo_{F}(-F) \cong \oo_{\PP^1}(1)$ and  $\oo_{E'}(-E') \cong  \oo_{\PP^1}(1)$. Moreover, the sheaf $\sigma_4^*(\oo_{E}(-E))$ is locally free and it is supported on  $F \cup E'$. Its restriction to the irreducible components are
\begin{align*} 
\sigma_4^*(\oo_{E}(-E))|_{E'}&\cong \oo_{\PP^1}(1), &
\sigma_4^*(\oo_{E}(-E))|_F&\cong \oo_{\PP^1}. 
\end{align*}
We tensor the last horizontal sequence in \eqref{big_diagram} by $\mathcal{L}_B^{-1} \cong \oo_{B'}(R+E'+C_1)$ and we get
\[
0 \to \oo_{\PP^1}(-1) \to  \mathcal{N}_{\sigma_4 \circ \sigma_3} \otimes \mathcal{L}_{B'}^{-1} \to \sigma_4^*(\oo_{E}(-E)) \otimes  \mathcal{L}_{B'}^{-1} \to 0.
\]
The long exact sequence in cohomology yields
\[
H^i(\mathcal{N}_{\sigma_4 \circ \sigma_3} \otimes \mathcal{L}_{B'}^{-1}) \cong H^i(\sigma_4^*(\oo_{E}(-E)) \otimes  \mathcal{L}_{B'}^{-1}), \qquad \forall \, i.
\]
By the intersection computation
 \begin{equation*}\label{eq_intersectionL}
(R+E'+C_1)E'=-2
\textrm{ and } 
(R+E'+C_1)F=4
\end{equation*}
the sheaf $\sigma_4^*(\oo_{E}(-E)) \otimes  \mathcal{L}_{B'}^{-1}$ is a locally free sheaf on $E' \cup F$ which has degree $-1$ on $F$ and degree $2$ on $E'$.
 Hence its global sections vanish on $F$ and, fixing an isomorphism $E' \cong \PP^1$, correspond to the sections of $H^0(\oo_{\PP^1}(2))$ which vanish  on the point $E' \cup F$. 
 
 Thus
\[
 H^0\big(\mathcal{N}_{\sigma_4 \circ \sigma_3} \otimes \mathcal{L}^{-1}_{B'}\big) \cong H^0(\oo_{\PP^1}(1)) \cong \CC^2.
\]
  \end{proof}


\begin{rem}\label{rem_iso} Let $q\in S$ be the point blown-up by $S' \rightarrow S$. 
The short exact sequence obtained pushing forward  \eqref{suc.def.S} produces a cohomology exact sequence 
\[
0 \rightarrow T_q  S \rightarrow H^1(S',T_{S'}) \rightarrow H^1(S,T_S) \rightarrow 0.
\]

Recall that if $\beta \colon S' \longrightarrow B'$ is a finite two to one cover, then $ H^1(S',T_{S'})= H^1(B',\beta_* T_{S'})$  splits as invariant and anti-invariant part.
Since $q$ is an isolated fixed point of the involution induced by the Albanese map, it acts as the multiplication by $-1$ on $T_qS$ and then
the image of $T_qS$ is contained in   $H^1 (S',T_{S'})^- $.
By  ({\it e.g.} Pardini \cite[Lemma 4.2]{Pa91}) we have 
\begin{align*}
 (\beta_*T_{S'})^+ & \cong T_{B'}(-\log (R+E'+C_1))   &(\beta_*T_{S'})^- & \cong  T_{B'} \otimes \mathcal{L}^{-1}_{B'} 
\end{align*}
By the Lemma \ref{lem_anti} and \eqref{eq_anti} then $h^1 (\beta_*T_{S'})^-  =2$, and so $T_qS$ maps isomorphically onto  $H^1 (S',T_{S'})^- $.

In particular the map
\[
 H^1(B',T_{B'}(-\log (R+E'+C_1))) \rightarrow  H^1(S, T_S),
\]
is an isomorphism.
\end{rem}

\begin{rem}\label{rem_InvEAntiInv}
Corollary \ref{cor_deformation} apply to the blow-up $\sigma_4 \colon B' \rightarrow B$ with $D'=R+E'+C_1$ since all required rigidites have been proved in Lemma \ref{Lem_Normal}.

So the natural map
\[
H^1\big(T_{B'}(-\log (R+E'+C_1)) \big) \hookrightarrow H^1\big(T_{B} \big)
\]  
is injective. Since the map $ H^1\big(T_{B} \big) \rightarrow  H^1\big(T_{A} \big)$ is an isomorphism, 
\[
H^1\big(T_{B'}(-\log (R+E'+C_1)) \big) \hookrightarrow H^1\big(T_{A} \big).
\]  
is injective as well.

\end{rem}
 Hence we have a commutative diagram
  \[
\begin{xy}
\xymatrix{
H^1(T_{B'}(-\log ((R+E'+C_1))) \ar@{->}[d]^{\cong}    \ar@{->}[r]  & H^1(T_{B'})  \ar@{->>}[d]  \\
H^1(T_S) \ar@{^{(}->}[r] & H^1(T_A) . 
 }
\end{xy}
\]  
The left vertical map is an isomorphism by Remark \ref{rem_iso}. The composition of  the top horizontal arrow and the right vertical arrow is the map in Remark \ref{rem_InvEAntiInv}, 
so injective, and therefore the lower horizontal map is injective.

\begin{prop} \label{prop_H1tan} It holds 
\[
h^1(T_S)=2.
\]
\end{prop}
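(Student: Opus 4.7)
The plan is to sandwich $h^1(T_S)$ between two matching bounds.

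For the lower bound $h^1(T_S) \geq 2$, I would invoke Proposition \ref{prop_moduli_dim}: the open set $\mathcal{M}$ has dimension $2$ at $[S]$ in the Gieseker moduli space, and since the deformations of $S$ up to isomorphism are locally parametrized by $H^1(T_S)$ via the Kuranishi description, we have $\dim_{[S]} \mathcal{M} \leq h^1(T_S)$, yielding $h^1(T_S) \geq 2$.

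For the upper bound $h^1(T_S) \leq 2$, I would leverage the injection $H^1(T_S) \hookrightarrow H^1(T_A)$ produced in the diagram above and exhibit a $2$-dimensional subspace of $H^1(T_A)$ containing its image. By Lemma \ref{lem_prodElliptic}, the $4$-dimensional space $H^1(T_A)$ contains three distinct hyperplanes $H, H_1, H_2$ whose pairwise intersection is a single $2$-dimensional subspace $V$; here $H$ parametrizes the projective deformations of $A$ and $H_j$ the deformations preserving the fibration $f_j \colon A \to T_j$. It suffices to argue that the image of $H^1(T_S)$ lies in $H \cap H_2$, since $H \cap H_2 = V$ by the lemma.

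The inclusion in $H$ is immediate: a first-order deformation of $S$ carries with it a deformation of the $(1,2)$-polarization $L$ (whose square is the class of the branch divisor $t + C_1$), so $A_\varepsilon$ remains polarized, hence projective. The inclusion in $H_2$ comes from the branch divisor: the Albanese map sends a first-order deformation of $S$ to a flat deformation of $A$ equipped with a flat deformation of $t + C_1$; its component $C_1$ is an elliptic curve with $C_1^2 = 0$ on $A$, and a flat first-order deformation of $C_1$ inside $A_\varepsilon$ extends the fibration $f_2 \colon A \to T_2$ to a relative elliptic fibration on $A_\varepsilon$, placing the image in $H_2$.

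The main obstacle is precisely this last deduction: one must verify that the entire fibration $f_2$ extends, not merely that the Hodge class $[C_1]$ remains of type $(1,1)$. The cleanest resolution I see is that a flat first-order deformation of an elliptic curve $C_1 \subset A$ with $C_1^2 = 0$ globalizes to a deformation of the quotient map $A \to A/C_1 \cong T_2$, hence of the fibration itself.
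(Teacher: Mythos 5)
Your proposal is correct and follows essentially the same route as the paper: the lower bound comes from Proposition \ref{prop_moduli_dim}, and the upper bound from the injection $H^1(T_S)\hookrightarrow H^1(T_A)$ together with the observation that the image lies in $H\cap H_2$, which is $2$-dimensional by Lemma \ref{lem_prodElliptic}. The only cosmetic difference is that the paper realizes the inclusion in $H_2$ by factoring the map through $H^1(T_{B'}(-\log C_1))\to H^1(T_A(-\log C_1))$, which is the log-cohomological formulation of your statement that the deformation of $S$ carries along a deformation of the pair $(A, C_1)$ and hence of the fibration $f_2$.
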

\begin{proof} The image of the  map $H^1(T_S) \rightarrow H^1(T_A)$ is contained in the hyperplane $H$ of  Lemma \ref{lem_prodElliptic}, since the Albanese variety of every surface of general type is an abelian variety.

We proved that $H^1(T_{B'}(-\log ((R+E'+C_1))) \cong H^1(T_S)$ and the induced map  $\varphi \colon  H^1(T_{B'}(-\log (R+E'+C_1)))  \to  H^1(T_A)$ is injective. So it is enough to prove
$\dim Im(\varphi)=2$

The function $\varphi$ factorizes as in the following commutative diagram.
  \[
\begin{xy}
\xymatrix{
H^1(T_{B'}(-\log (R+E'+C_1)) \ar@{->}[d]    \ar@{->}[r]  &  H^1(T_A)  \\
H^1(T_{B'}(-\log (C_1))  \ar@{->}[r]  & H^1(T_A( - \log C_1))   \ar@{->}[u]_{\varepsilon}
 }
\end{xy}
\]  
where $C_1$ is the elliptic curve in Figure \ref{Fig4}. 
We recall that $A$ is isogenous to the product of two elliptic curves $T_1 \times T_2$ and $C_1$ is a fibre of the induced elliptic fibration $f_2$ on $T_2$. \\ 
So the image of $\epsilon$ is contained in $H_2$. Then $Im(\varphi) \subset H \cap H_2$ has,  by  Lemma \ref{lem_prodElliptic}, dimension at most 2. On the other hand it is at least $2$ by Proposition \ref{prop_moduli_dim}, and therefore it equals $2$. 
\end{proof}
  
\begin{prop} \label{prop.moduli}  The following holds: for all $j \in\{1,2,4\}$
$\mathcal{M}_j $ is 
a generically smooth irreducible component of the moduli space of the surfaces of general type of
dimension $2$.
\end{prop}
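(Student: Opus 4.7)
The plan is to combine the irreducibility and dimension-$2$ statement for $\mathcal{M}_j$ (Corollary \ref{prop_irred}) with the tangent space computation $h^1(S,T_S)=2$ (Proposition \ref{prop_H1tan}), via standard deformation theory for minimal surfaces of general type.

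First I would invoke the fact that the Zariski tangent space to the Gieseker moduli space $\mathcal{M}^{\mathrm{can}}_{2,2,7}$ at a point $[S]$ is canonically isomorphic to $H^1(S,T_S)$. By Proposition \ref{prop_H1tan} this has dimension $2$ at every $[S]\in\mathcal{M}_j$, so the local Krull dimension of the moduli space at each such point is at most $2$. On the other hand, by Corollary \ref{prop_irred} $\mathcal{M}_j$ is irreducible of dimension exactly $2$, so no irreducible component of the moduli space passing through $[S]$ can properly contain $\mathcal{M}_j$. This forces $\mathcal{M}_j$ to coincide, on a neighbourhood of each of its points, with an irreducible component of $\mathcal{M}^{\mathrm{can}}_{2,2,7}$; equivalently, $\mathcal{M}_j$ is open in the moduli space and hence is itself an irreducible component of dimension $2$.

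For generic smoothness I would then observe that at every $[S]\in\mathcal{M}_j$ the local Krull dimension of the component ($2$) equals the dimension of the Zariski tangent space ($2$), so the Kuranishi deformation space $\mathrm{Def}(S)$ is regular: the obstruction map into $H^2(S,T_S)$ vanishes on $H^1(S,T_S)$, since otherwise the local dimension would drop below $h^1(T_S)$. Passing from the Kuranishi space to the coarse moduli space introduces at worst finite quotient singularities coming from $\mathrm{Aut}(S)$, and for a surface of general type the generic element has trivial automorphism group, so smoothness of $\mathrm{Def}(S)$ descends to smoothness of $\mathcal{M}^{\mathrm{can}}_{2,2,7}$ at the general point of $\mathcal{M}_j$. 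The only (mild) obstacle in the proof is precisely this last descent step, which is exactly why the statement is \emph{generically} smooth rather than smooth everywhere.
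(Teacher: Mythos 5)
Your proposal is correct and takes essentially the same route as the paper: it combines Corollary \ref{prop_irred} (irreducibility and dimension $2$ of $\mathcal{M}_j$) with Proposition \ref{prop_H1tan} ($h^1(S,T_S)=2$) to conclude that $\mathrm{Def}(S)$ is smooth of dimension $2$ at each point, hence that $\mathcal{M}_j$ is an open, generically smooth irreducible component. The paper's own proof is exactly this argument stated in two lines; your version merely spells out the tangent-space/dimension comparison and the descent through the $\mathrm{Aut}(S)$-quotient.
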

\begin{proof}
We have shown that $\mathcal{M}_j$ is  irreducible of dimension $2$ in Proposition  \ref{prop_irred}.
Then by Proposition  \ref{prop_H1tan} $Def(S)$ is smooth of dimension $2$ at each point. It follows that $\mathcal{M}_j$ is an irreducible component, and that this component is generically smooth.
\end{proof}

\bigskip

Matteo Penegini, Universit\`a degli Studi di Genova, DIMA Dipartimento di Matematica, I-16146 Genova, Italy \\
\emph{e-mail} \verb|penegini@dima.unige.it|

\medskip

Roberto Pignatelli Universit\`a degli Studi di Trento, Dipartimento di Matematica,
I-38123 Trento,  Italy \\
\emph{e-mail} \verb|Roberto.Pignatelli@unitn.it|

\end{document}